\newtheorem{theorem}{Theorem}
\newtheorem{corollary}[theorem]{Corollary}
\newtheorem{lemma}[theorem]{Lemma}
\newtheorem{proposition}[theorem]{Proposition}
\newtheorem{conjecture}[theorem]{Conjecture}
\newtheorem*{theoremz}{Theorem}
\theoremstyle{definition}
\newtheorem{definition}[theorem]{Definition}
\newtheorem{example}[theorem]{Example}
\newtheorem{remark}[theorem]{Remark}
\title{The regularity of almost all edge ideals}
\author{Alexander Engstr\"om\footnote{\tt alexander.engstrom@aalto.fi}  \and Milo Orlich\footnote{\tt milo.orlich@aalto.fi}}
\def\til{~}
\def\reg{\mathrm{reg}}
\begin{document}

\maketitle

\begin{abstract}
A fruitful contemporary paradigm in graph theory is that almost all graphs that do not contain a certain subgraph have common structural characteristics. An example is the well-known result saying that almost all triangle-free graphs are bipartite. The ``almost'' is crucial, without it such theorems do not hold. In this paper we transfer this paradigm to commutative algebra and make use of deep graph theoretic results. A key tool are the \emph{critical graphs} introduced relatively recently by Balogh and Butterfield, who proved that almost all graphs not containing a critical subgraph have common structural characteristics analogous to being bipartite.

For a graph $G$, let $I_G$ denote its edge ideal, the monomial ideal generated by $x_ix_j$ for every edge $ij$ of $G$. In this paper we study the graded Betti numbers of $I_G$, which are combinatorial invariants that measure the complexity of a minimal free resolution of $I_G$. The Betti numbers of the form $\beta_{i,2i+2}$ constitute the ``main diagonal'' of the Betti table. It is well known that for edge ideals any Betti number  to the left of this diagonal is always zero. We identify a certain ``parabola'' inside the Betti table and call \emph{parabolic Betti numbers} the entries of the Betti table bounded on the left by the main diagonal and on the right by this parabola. Let $\beta_{i,j}$ be a parabolic Betti number on the $r$-th row of the Betti table, for $r\ge3$. We prove that almost all graphs $G$ with $\beta_{i,j}(I_G)=0$ can be partitioned into $r-2$ cliques and one independent set. In particular, for almost all graphs $G$ with $\beta_{i,j}(I_G)=0$, the regularity of $I_G$ is $r-1$. 
\end{abstract}

%\tableofcontents

\section{Introduction}

In this paper we consider edge ideals $I_G$ and their Betti numbers $\beta_{i,j}(I_G)$. It is well known that these numerical invariants of $I_G$ can be computed with Hochster's formula by studying the simplicial homology of the independence complex $\mathrm{Ind}(G)$ of $G$ and its subcomplexes. Here $\mathrm{Ind}(G)$ is the simplicial complex whose faces are the independent sets of $G$. Ultimately, this amounts to understanding whether $G$ contains some specific induced subgraphs or not, and this is a problem that can be naturally phrased in terms of extremal graph theory.

For graphs $G$ and $H$, one says that $G$ is \emph{$H$-free} if $G$ does not contain a copy of $H$ as an induced subgraph. %In this paper we say that a graph $G$ is an \emph{$(s,t)$-template} if $G$ can be covered by $s$ cliques (i.e., complete graphs) and $t$ independent sets.
Balogh and Butterfield\til\cite{BaBu} introduced relatively recently the concept of a \emph{critical graph}, a rather technical notion, giving a characterization for critical graphs $H$ in terms of almost all graphs that are $H$-free. This is the key tool for our main results, and we believe that we are the first to employ this method in commutative algebra.

Recall that the Betti numbers on the $r$-th row of the Betti table are those of the form $\beta_{i,i+r}$, for $i\ge0$.
In this paper we say that a Betti number $\beta_{i,i+r}$, for $r\ge3$, is  a \emph{parabolic Betti number} if
$$r-2\le i\le r-2+\binom{r-1}2.$$
The region of the Betti table consisting of the parabolic Betti numbers is bounded from the left by the diagonal consisting of $\beta_{1,4},\beta_{2,6}, \beta_{3,8},\dots$, and from the right by the ``parabola'' consisting of $\beta_{1,4},\beta_{2,6},\beta_{3,7},\dots$ In Figure\til\ref{fig:starredregion}
\begin{figure}
\begin{center}
\begin{tikzpicture}[>=latex,scale=0.4]
% grid
\draw [help lines, very thin] (0,-9.75) grid (37.75,0);
% parabola
%\draw[rotate around={270:(0,0)},thick] (0,.375) parabola (8.5,36.5);
\draw[rotate around={270:(0,0)},thick] (0,.375) parabola (8.64581,37.75);
% the next piece of parabola is to cover the very first part of the parabola
\draw[rotate around={270:(0,0)},very thick,white] (0,.375) parabola (1.5,1.5);
\fill[white] (.1,-.4) rectangle (.9,-.03);
\fill (.5,-.5) circle (.1);
\fill (1.5,-1.5) circle (.1);
\fill (3.5,-2.5) circle (.1);
\fill (6.5,-3.5) circle (.1);
\fill (10.5,-4.5) circle (.1);
\fill (15.5,-5.5) circle (.1);
\fill (21.5,-6.5) circle (.1);
\fill (28.5,-7.5) circle (.1);
\fill (36.5,-8.5) circle (.1);
% piece of grid killed by the white parabola
\draw [help lines, very thin] (0.8,-1.2) grid (1.1,-.8);
% axes
\draw [very thick] (0,1) -- (0,-9.75);
\draw [very thick] (-1,0)-- (37.75,0);
% diagonal
\fill (2.5,-2.5) circle (.1);
\fill (3.5,-3.5) circle (.1);
\fill (4.5,-4.5) circle (.1);
\fill (5.5,-5.5) circle (.1);
\fill (6.5,-6.5) circle (.1);
\fill (7.5,-7.5) circle (.1);
\fill (8.5,-8.5) circle (.1);
\fill (9.5,-9.5) circle (.1);
\draw[thick] (.5,-.5)--(9.75,-9.75);
% gray rectangles
\fill [gray, opacity=0.3] (1,-2) rectangle (2,-1);
\fill [gray, opacity=0.3] (2,-3) rectangle (4,-2);
\fill [gray, opacity=0.3] (3,-4) rectangle (7,-3);
\fill [gray, opacity=0.3] (4,-5) rectangle (11,-4);
\fill [gray, opacity=0.3] (5,-6) rectangle (16,-5);
\fill [gray, opacity=0.3] (6,-7) rectangle (22,-6);
\fill [gray, opacity=0.3] (7,-8) rectangle (29,-7);
\fill [gray, opacity=0.3] (8,-9) rectangle (37,-8);
\fill [gray, opacity=0.3] (9,-9.75) rectangle (37.75,-9);
% x-coordinates
\node [font=\footnotesize] at (.5,.5) {0};
\node [font=\footnotesize] at (1.5,.5) {1};
\node [font=\footnotesize] at (3.5,.5) {3};
\node [font=\footnotesize] at (6.5,.5) {6};
\node [font=\footnotesize] at (10.5,.5) {10};
\node [font=\footnotesize] at (15.5,.5) {15};
\node [font=\footnotesize] at (21.5,.5) {21};
\node [font=\footnotesize] at (28.5,.5) {28};
\node [font=\footnotesize] at (36.5,.5) {36};
% y-coordinates
\node [font=\footnotesize] at (-.5,-.55) {2};
\node [font=\footnotesize] at (-.5,-1.55) {3};
\node [font=\footnotesize] at (-.5,-2.55) {4};
\node [font=\footnotesize] at (-.5,-3.55) {5};
\node [font=\footnotesize] at (-.5,-4.55) {6};
\node [font=\footnotesize] at (-.5,-5.55) {7};
\node [font=\footnotesize] at (-.5,-6.55) {8};
\node [font=\footnotesize] at (-.5,-7.55) {9};
\node [font=\footnotesize] at (-.58,-8.55) {10};
\end{tikzpicture}
\caption{The parabolic Betti numbers in the first few rows of the Betti table are marked by gray squares.}
\label{fig:starredregion}
\end{center}
\end{figure}
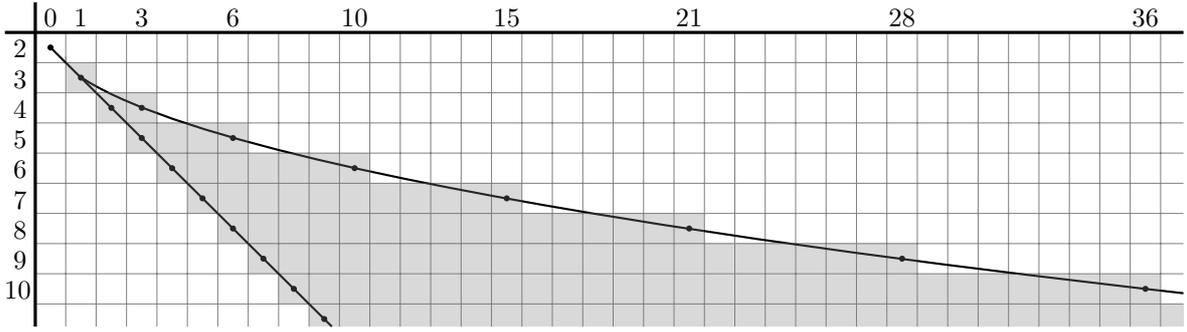
the Betti numbers on row $r$, for $3\le r\le 10$, are marked with gray squares, and  the diagonal and the parabola bounding the region of parabolic Betti numbers are also drawn. The reason for considering these parabolic Betti numbers $\beta_{i,j}(I_G)$ is that when one of them is zero, we will show that $G$ is $H$-free for some critical graph $H$.

In the main results below, when we say that \textit{almost every graph} in some family 
$\mathcal A$ is in some family $\mathcal B$, we mean the following: for every $n$, if 
$\mathcal A_n$ and $\mathcal B_n$ denote respectively the subsets of $\mathcal A$ and $\mathcal B$ whose elements have $n$ vertices, we have $\mathcal B_n\subseteq\mathcal A_n$ and $\lim_{n\to\infty}\frac{|\mathcal A_n|}{|\mathcal B_n|}=1$.
 We say that a graph $G$ is an \emph{$(s,t)$-template} if $G$ can be covered with $s$ cliques (i.e., complete graphs) and $t$ independent sets. One of our main results describes the structure of almost all graphs with some vanishing parabolic Betti number:

\begin{theoremz}[Theorem\til\ref{thm:main1}]
Let\/ $\beta_{i,j}$ be a parabolic Betti number on the $r$-th row of the Betti table, for some $r\ge3$. Then almost every graph $G$ with $\beta_{i,j}(I_G)=0$ is an $(r-2,1)$-template.
\end{theoremz}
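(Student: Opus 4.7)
The plan is to deduce the theorem by combining Hochster's formula, which reinterprets $\beta_{i,j}(I_G)$ in terms of induced subgraphs of $G$, with the Balogh--Butterfield structure theorem for almost all $H$-free graphs. The whole argument amounts to sandwiching the class $\{G:\beta_{i,j}(I_G)=0\}$ between the class of $(r-2,1)$-templates and the class of $H$-free graphs for a suitably chosen critical $H$.

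First, by Hochster's formula for edge ideals,
\[
\beta_{i,i+r}(I_G)\;=\;\sum_{W\subseteq V(G),\,|W|=i+r}\dim_k\tilde H_{r-2}(\mathrm{Ind}(G[W]);k),
\]
a sum of non-negative integers. Hence $\beta_{i,j}(I_G)=0$ forces $G$ to be $H$-free for every graph $H$ on $j$ vertices with $\tilde H_{r-2}(\mathrm{Ind}(H);k)\neq 0$. The first step of the proof is then to produce, for each parabolic pair $(i,j)$ on the $r$-th row, a specific such $H=H_{i,j}$ that is moreover \emph{critical} in the sense of Balogh--Butterfield, with associated asymptotic structure equal to the class of $(r-2,1)$-templates. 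For the main-diagonal entry $(i,j)=(r-2,2(r-1))$ the natural candidate is the induced matching $(r-1)K_2$, whose independence complex is the join of $r-1$ copies of $S^0$, hence a simplicial $(r-2)$-sphere; for $r=3$ this recovers the classical fact that almost every $2K_2$-free graph is split, matching the $(1,1)$-template conclusion. For off-diagonal parabolic pairs the graphs $H_{i,j}$ should be obtained from $(r-1)K_2$ by controlled modifications (added vertices and edges chosen so as to preserve $\tilde H_{r-2}$), and the parabola bounding the region on the right should encode exactly which such modifications keep $H_{i,j}$ Balogh--Butterfield critical.

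Once $H_{i,j}$ is in hand, Balogh--Butterfield yields that almost every $H_{i,j}$-free graph is an $(r-2,1)$-template. Combining this with the containment $\{G:\beta_{i,j}(I_G)=0\}\subseteq\{G:G\text{ is }H_{i,j}\text{-free}\}$ from the Hochster step shows that almost every graph in our class of interest is an $(r-2,1)$-template, provided the $(r-2,1)$-templates are not themselves a negligible sub-fraction. To secure this I would verify the reverse inclusion: every $(r-2,1)$-template $G$ satisfies $\reg I_G\le r-1$, so $\beta_{i,j}(I_G)=0$ throughout the $r$-th row and in particular on each parabolic entry. This regularity bound is a standard consequence of splitting the edge ideal along the clique / independent-set decomposition of $G$.

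The main obstacle I foresee is the first step: locating the correct critical graphs $H_{i,j}$ and verifying that they are simultaneously homological witnesses (for Hochster) and Balogh--Butterfield critical graphs whose asymptotic class is precisely $(r-2,1)$-templates. The main-diagonal case and the case $r=3$ are largely classical, but the full parabolic region requires pairing the induced-matching skeleton with carefully chosen side configurations whose combinatorial parameters mesh with the Balogh--Butterfield coloring invariants; the exact shape of the parabolic region is almost certainly forced by the limits of such constructions.
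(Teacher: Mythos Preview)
Your overall strategy is exactly the paper's: sandwich $\{G:\beta_{i,j}(I_G)=0\}$ between the $(r-2,1)$-templates (via the regularity bound $\reg I_G\le r-1$ for such templates) and the $H$-free graphs (via Hochster), then invoke Balogh--Butterfield for a critical $H$ and squeeze. The only missing piece is the one you flag yourself: the identity of the critical graphs $H_{i,j}$.

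The answer is simpler than you anticipate. For the parabolic Betti number at horizontal distance $p$ from the main diagonal on row $r$, take $H$ to be a disjoint union $K_{a_1}\sqcup\cdots\sqcup K_{a_{r-1}}$ of $r-1$ cliques with $a_1\le\cdots\le a_{r-1}$, $a_1=2$, $2\le a_i\le i$ for all $i$, and $\sum a_i=2(r-1)+p$; the parabolic constraint $0\le p\le\binom{r-1}{2}$ is exactly what guarantees such a tuple exists. These are precisely your ``controlled modifications of $(r-1)K_2$'': you enlarge some of the edges into larger cliques. The independence complex of a disjoint union of nontrivial cliques is a join of discrete sets, homotopy equivalent to a wedge of $(r-2)$-spheres, so $\tilde H_{r-2}\neq 0$ and the Hochster step goes through. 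The criticality verification is then a short direct computation: one checks $\chi_{\mathrm c}(H)=r$, that $(r-2,1)$ is the \emph{unique} witnessing pair, and that for every $(s,t)$ with $s+t=r-2$ the residual family $\mathcal F(H,s,t)$ contains only graphs on at most two vertices, forcing $|\mathcal P(n,\mathcal F(H,s,t))|\le 2$. The size constraint $a_i\le i$ is precisely what makes every pair $(s,t)$ with $t\ge 2$ and $s+t=r-1$ a covering pair (hence non-witnessing), and this is why the parabola bounds the region on the right.
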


Recall that the \emph{regularity} of $I_G$, denoted $\reg(I_G)$, is the largest index of a non-zero row in the Betti table. The other main result, which follows from the theorem above and an additional argument, is the following:

\begin{theoremz}[Theorem\til\ref{thm:main2}]
Let\/ $\beta_{i,j}$ be a parabolic Betti number on the $r$-th row of the Betti table, for some $r \geq 3$. Then, for almost every graph $G$ with $\beta_{i,j}(I_G)=0$, we have:
\begin{enumerate}
\item $\reg(I_G)=r-1$, and
\item every parabolic Betti number of\/ $I_G$ above row $r$ is non-zero.
\end{enumerate}
\end{theoremz}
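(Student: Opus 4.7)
\emph{Proof proposal.} The plan is to combine Theorem\til\ref{thm:main1} with a cochordal-cover bound for the upper estimate on $\reg(I_G)$ and with an explicit-subgraph argument for the non-vanishing assertions.

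By Theorem\til\ref{thm:main1} I may restrict to graphs $G$ admitting a partition $V(G) = V_1 \cup \cdots \cup V_{r-2} \cup W$ with each $V_i$ a clique of $G$ and $W$ independent in $G$. For each $i$, let $H_i$ be the spanning subgraph of $G$ whose edges are precisely the edges of $G$ having at least one endpoint in $V_i$. The complement of $H_i$ has $V_i$ as an independent set, $V(G)\setminus V_i$ as a clique, and an arbitrary bipartite structure between the two parts, so it is a split graph and in particular chordal; consequently $H_i$ is cochordal. Because $W$ is independent, every edge of $G$ is incident to some $V_i$, so $E(G) = \bigcup_{i=1}^{r-2} E(H_i)$. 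Woodroofe's cochordal-cover bound $\reg(I_G) \le \mathrm{cochord}(G) + 1$ then yields $\reg(I_G) \le r-1$, the upper estimate required in part (1).

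For part (2), and hence for the matching lower bound in part (1), I would exhibit, for each parabolic position $(i',j')$ on a row $r' \in \{3, \ldots, r-1\}$, a fixed graph $F$ on $j'$ vertices that is itself an $(r'-2,1)$-template (and therefore also an $(r-2,1)$-template) and that satisfies $\tilde{H}_{r'-2}(\mathrm{Ind}(F)) \ne 0$. Hochster's formula then converts the appearance of $F$ as an induced subgraph of $G$ into a non-zero summand of $\beta_{i',j'}(I_G)$. In the natural random-template model, where cross-part edges are independent Bernoulli with parameter $1/2$, every such fixed $F$ of bounded size occurs almost surely as an induced subgraph, so $\beta_{i',j'}(I_G) \ne 0$ almost surely. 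The simplest witness is already suggestive: two disjoint edges give $\beta_{1,4}(I_F) \ne 0$ since $\mathrm{Ind}(F) \simeq S^1$.

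The main obstacle will be the construction of witnesses for every parabolic position $(i',j')$: one needs a template on exactly $j'$ vertices whose independence complex carries a non-trivial class in degree $r'-2$, and one must check that this template embeds as an induced subgraph of a generic $(r-2,1)$-template. The shape of the parabolic region suggests that such witnesses can be built systematically from small complete bipartite or multipartite graphs, possibly enlarged by simplicial joins or by adjoining isolated cliques to tune the vertex count and the homological degree; producing a uniform construction across the full parabolic range is what fixes the precise region in which the theorem can be stated.
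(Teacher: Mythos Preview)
Your upper bound for part~(1) is correct and matches the paper exactly: Lemma~\ref{lem:Russ} is Woodroofe's cochordal-cover bound, and your explicit split-graph cover is one way to realise it.

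Your approach to part~(2), however, contains a genuine error. You require the witness $F$ on row $r'$ to be itself an $(r'-2,1)$-template, but this is self-defeating: by the very bound you just invoked, any $(r'-2,1)$-template $F$ has $\reg(I_F)\le r'-1$, hence every Betti number of $I_F$ on row $r'$ vanishes, and in particular $\tilde H_{r'-2}(\mathrm{Ind}(F))=0$ when $|V(F)|=j'$. Your own example already exhibits the contradiction: two disjoint edges is the parabolic $2$-cluster $\overline{K_{2,2}}$, and it is \emph{not} a split graph---no partition into one clique and one independent set covers all four vertices. The paper's witnesses are precisely the parabolic $(r'-1)$-clusters, and the content of Lemma~\ref{Hcritical}(b) is that $(r'-2,1)$ is their unique \emph{witnessing} (i.e.\ non-covering) pair: they sit exactly one step beyond being $(r'-2,1)$-templates, which is what lets them carry homology in degree $r'-2$.

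This also undermines your embedding heuristic. Since the correct witness $F$ is not an $(r'-2,1)$-template, one cannot locate it inside an $(r-2,1)$-template $G$ by matching parts; one must spread the $r'-1$ cliques of $F$ across $r'-1$ of the $r-2$ clique parts of $G$. More seriously, the ``random-template model with Bernoulli cross-edges'' is not the uniform measure on unlabeled $(r-2,1)$-templates, and the transfer between the two is not free. The paper avoids this entirely with a counting argument (Proposition~\ref{prop:clusterInTempV2}): by the Pr\"omel--Steger and Balogh--Butterfield estimates (Lemmas~\ref{lem:countPv2} and~\ref{lem:countQ}), the number of graphs on $n$ vertices avoiding any fixed parabolic $k$-cluster with $k\le r-2$ is exponentially smaller in $\binom n2$ than the number of $(r-2,1)$-templates, so almost every $(r-2,1)$-template contains all of them as induced subgraphs. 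That, via Hochster's formula, gives both the non-vanishing in part~(2) and the lower bound $\reg(I_G)\ge r-1$.
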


\subsection{Outline of the paper}

In Section\til\ref{sec:critical graphs} we recall the necessary graph theory background, with various examples so as to give a friendly introduction to critical graphs. In Section\til\ref{sec:backalgebra} we recall some notions in commutative algebra, in particular Betti numbers and Hochster's formula.

In Section\til\ref{sec:parabolic} we define the main objects that come into play: parabolic clusters and parabolic Betti numbers. The parabolic clusters are the critical graphs to which we apply the machinery of Balogh and Butterfield\til\cite{BaBu}, and they are the graphs that do not appear as induced subgraphs of $G$ exactly when some suitable parabolic Betti number $\beta_{i,j}(I_G)$ is zero.

In Section\til\ref{sec:d1templates} we prove some preliminary results concerning $(d,1)$-templates, that is, graphs that can be covered with $d$\til{cliques} and one independent set. In Section\til\ref{sec:main} we state and prove the two main theorems mentioned above.

One may wonder how ``sharp'' our results are.  More precisely, we prove that for \emph{almost} all graphs  with a parabolic Betti number $\beta_{i,j}(I_G)=0$ on the $r$-th row, we get $\reg(I_G)=r-1$. This (as recalled above) means that when we take the graphs on $n$ vertices satisfying the repsective properties, the ratio of cardinalities is asymptotically equal to $1$. One may ask if the ``almost'' is just due to the special methods we employed, and how often it actually happens that the regularity is \emph{not} equal to $r-1$: we answer this question in Section\til\ref{sec:notrminus1}. The other natural question is whether one may be able to generalize the results above to arbitrary Betti numbers, and not just the parabolic ones: in Section\til\ref{sec:non-parabolic} we show how this  falls apart in the case of non-parabolic Betti numbers. Related to this, the paper~\cite{CoKaVa} (in particular Corollary 6.12) addresses the problem of producing squarefree monomial ideals that have arbitrarily high regularity despite having linear syzygies in the first arbitrarily many steps.

In Section\til\ref{sec:erdos} we relate our results to the famous Erd\H{o}s--Hajnal conjecture. Lastly, in Section\til\ref{sec:spaceofgraphs} we consider a tentative  ``space of graphs'' and prove that the $(d,1)$-templates are connected in that space, and in Section\til\ref{sec:future} we suggest possible future directions.

\subsection{Relation to papers on edge ideals of random graphs}

In this paper we work with \emph{unlabeled} graphs. Most papers on resolutions of ideals of random graphs and other monomial ideals consider labeled graphs instead, so that one would for instance make a distinction between the ideals $(xy,yz)$ and $(xz,zy)$ in $k[x,y,z]$. With a slight abuse of notation, we consider \emph{edge ideals on unlabeled graphs}, essentially meaning that we do not distinguish $(xy,yz)$ from $(xz,zy)$: indeed, we are only interested in the Betti numbers of edge ideals, and those do not depend on the labeling of the vertices.
There are geometric and probabilistic pros and cons with both approaches. In selecting almost all graphs, we do not privilege those with a particular number of edges. By keeping the number of edges low instead of requiring a parabolic Betti number to be zero, Erman and Yang\til\cite{EY} and Dochtermann and Newman\til\cite{DN} proved similar regularity and vanishing results as in our main theorems. Both of those papers build on the theory of random flag complexes developed by Kahle\til\cite{K1,K2,K3}.

There are more recent interesting results on resolutions of ideals of random graphs, see for example\til\cite{BW,BY,BEY,LHKS,LPSSW,EGS,SWY}.

\subsubsection*{Acknowledgements}

We thank two anonymous referees and Russ Woodroofe for many helpful comments, which greatly helped to improve the paper.
The first author would like to thank Magnus Halldorsson, Klas Markstr\"om, Andrzej Ruci\'nski, Carsten Thomassen and Institut Mittag-Leffler for organising the research program \textit{Graphs, Hypergraphs and Computing}.
The second author was supported by the Finnish Academy of Science and Letters, with the \textit{Vilho, Yrj\"o and Kalle V\"ais\"al\"a Fund}.

\section{Background}

\subsection{Tools from graph theory: critical graphs}\label{sec:critical graphs}

In this section we recall all the necessary notions from graph theory. The definition of critical graph (see Definition\til\ref{def:critical}) is quite technical and requires several preliminary concepts, in particular that of an ``$(s,t)$-template'', which is a term we coin. For the rest we  follow~\cite{BaBu} closely, both in notation and terminology. Because the concept of critical graph is not yet standard in commutative algebra, we offer several examples meant for the more algebraically-inclined readers to illustrate all the unavoidable notational technicalities.

A \emph{finite simple graph} is a pair $(V_G,E_G)$ where $V_G$ is the (finite) set of \emph{vertices} of $G$ and $E_G\subseteq\binom{V_G}2$ is the set of \emph{edges}. That is, the edges have no direction and we allow no multiple edges nor loops.
For a subset $U\subseteq V_G$, we denote by $G[U]$ the \emph{subgraph of $G$ induced by $U$}, which by definition is the subgraph with vertex set $U$ and the edges of\til$G$ with both endpoints in $U$.

By a \emph{graph} we will actually most often mean an \textbf{unlabeled} graph, i.e., the isomorphism class of a finite simple graph. One may still consider the edge ideal of an unlabeled graph, up to permutation of the variables of the polynomial ring. We observe that this does not affect the results of this paper.

Recall that a \emph{clique} (or \emph{complete graph}) is a graph where  any two vertices are adjacent, namely  connected by an edge. We denote by $K_n$ the clique on $n$ vertices. An \emph{independent set} is the complement of a clique. 

A coloring of a graph $G$ is a partition of the vertex set $V_G$ into independent sets. More formally, a $k$-\emph{coloring} of a graph\til$G$ is  a function $f\colon V_G\to[k]=\{1,\dots,k\}$ such that, if we denote $V_i:=\{v\in V_G\mid f(v)=i\},$ then each induced subgraph $G[V_i]$ is an independent set. More explicitly, in this situation one doesn't allow adjacent vertices to have the same ``color'' (i.e., the value of $f$). If such a coloring exists, one says that $G$ is \emph{$k$-colorable}. 

In this paper we consider a more general version of graph coloring, which is also well studied in graph theory but perhaps less popular in other branches of math. We partition the vertex set $V_G$ not just into independent sets, but instead into cliques and independent sets:

\begin{definition}\label{defcovtemp}
We say that a pair of non-negative integers $(s,t)$ is a \emph{covering pair} for a graph $G$ if there is a function $f\colon V_G\to[s+t]$ such that, if we set $V_i:=\{v\in V_G\mid f(v)=i\}$, then $G[V_i]$ is a clique for $1\le i\le s$ and $G[V_i]$ is an independent set for $s+1\le i\le s+t$. If $(s,t)$ is a covering pair for $G$, we call $G$ an \emph{$(s,t)$-template}.
\end{definition}

One may observe the following:
\begin{enumerate}
\item Any of the cliques or independent sets in the definition above may be empty. So in particular if a graph $G$ is an $(s,t)$-template, then $G$ is also an $(s',t')$-template for any $s'\ge s$ and $t'\ge t$.
\item The $(0,2)$-templates are the bipartite graphs. The $(0,k)$-templates are exactly the $k$-colorable graphs in the ``classical'' sense, and the $(1,1)$-templates are commonly known as split graphs.
\end{enumerate}

\begin{example}
Consider $P_5$, the path on five vertices. The pair $(2,0)$ is not a covering pair for $P_5$ because the largest cliques in $P_5$ are edges, and two edges are not enough to cover all of $P_5$. On the other hand $(2,1)$ is a covering pair for $P_5$: one may pick as independent set the middle point of $P_5$ and as the two cliques the first and last edge. Notice that this is not the unique way to cover $P_5$ with two cliques and one independent set. For instance, the independent set could also consist of the first, third and fifth vertex, and the two cliques would then consist of the two remaining vertices, one vertex each.
\end{example}

\begin{definition}
The \emph{coloring number} of a graph~$G$, denoted $\chi_{\mathrm c}(G)$, is the minimum $k$ such that every pair $(s,t)$ of non-negative integers with $s+t=k$ is a covering pair for~$G$. If a pair $(s,t)$ with $s+t=\chi_{\mathrm c}(G)-1$ is not a covering pair for~$G$, we call $(s,t)$ a \emph{witnessing pair} for $G$.
\end{definition}

The concept of coloring number was introduced in\til\cite{AleRussian,BoTh,PSdue}. It is also known in the literature as ``binary chromatic number'', for instance in \cite{AKMedit,BaMaedit}.

\begin{example}\label{examplecoloring}
Consider the five-cycle $C_5$ and the seven-cycle $C_7$ (drawn in Figure~\ref{fig:cyclescoloring} to help visualize things). 
\begin{figure}
\begin{center}
\begin{tikzpicture}[dot/.style={draw,fill,circle,inner sep=1pt}]
  \foreach \l [count=\n] in {0,1,2,3,4} {
    \pgfmathsetmacro\angle{90-360/5*(\n-1)}
      \node[dot] (n\n) at (\angle:1) {};
    \fill (\angle:1)  circle (0.1);
  }
  \draw[thick] (n1) -- (n2) -- (n3) -- (n4) -- (n5)  -- (n1);
  \coordinate [label=$C_5$] (G) at (0,-1.7);
\end{tikzpicture}\qquad\qquad\begin{tikzpicture}[dot/.style={draw,fill,circle,inner sep=1pt}]
  \foreach \l [count=\n] in {0,1,2,3,4,5,6} {
    \pgfmathsetmacro\angle{90-360/7*(\n-1)}
      \node[dot] (n\n) at (\angle:1) {};
    \fill (\angle:1)  circle (0.1);
  }
  \draw[thick] (n1) -- (n2) -- (n3) -- (n4) -- (n5) -- (n6) -- (n7) -- (n1);
  \coordinate [label=$C_7$] (G) at (0,-1.7);
\end{tikzpicture}
\caption{The graphs in Examples~\ref{examplecoloring} and~\ref{examplecritical}.}
\label{fig:cyclescoloring}
\end{center}
\end{figure}
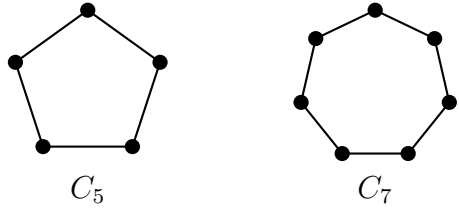
One may check that $\chi_{\mathrm c}(C_5)=3$ and that  $(2,0)$, $(1,1)$ and $(0,2)$ all are witnessing pairs for $C_5$. One may also check that $\chi_{\mathrm c}(C_7)=4$ and that the witnessing pairs for $C_7$ are $(3,0)$ and $(2,1)$, whereas $(1,2)$ and $(0,3)$ are covering pairs for $C_7$.
\end{example}

We will need to measure how much of a graph $H$ is left when we cover as much as possible of $H$ with $s$ cliques and $t$ independent sets. The next definition captures this idea:

\begin{definition}\label{defFF}
For a graph $H$ and non-negative integers $s$ and $t$, denote by $\mathcal{F}(H,s,t)$ the set of minimal (by induced containment) graphs $F$ such that $H$ can be covered by $s$~{cliques}, $t$~{independent} sets, and a copy of $F$. In other words, $\mathcal{F}(H,s,t)$ consists of the  graphs in the set
$$\{H- U \mid U\subseteq V_H\text{ and $H[U]$ is an $(s,t)$-template}\}$$
which are minimal with respect to induced containment.
\end{definition}

In particular, one gets $\mathcal{F}(H, s, t)=\{\emptyset\}$ if $H$ itself is an $(s,t)$-template.  Notice moreover that in practice when trying to determine what a specific $\mathcal F(H,s,t)$ is, we consider \emph{maximal} cliques and independent sets. If they were not maximal, then we would just end up with a graph $F$ that is not minimal, that is, a graph $F$ that strictly contains some other $F'$, obtained as an induced subgraph with maximal cliques and independent sets.

\begin{example}
Consider $H=P_5$. The set $\mathcal F(P_5,1,0)$ consists of two graphs $F_1$ and $F_2$: $F_1$ is the path on three vertices, obtained when we choose to cover one of the external edges of $P_5$, and $F_2$ is the the disjoint union of an edge and a vertex,
obtained when we choose to cover one of the internal edges of $P_5$. Indeed, neither $F_1$ nor $F_2$ is an induced subgraph of the other. 
\end{example}

%Recall that by ``graph'' we mean unlabeled graph.
\begin{definition}\label{defPP}
A graph $G$ is called \emph{$H$-free} if $G$ does not contain a copy of $H$ as an induced subgraph. For a family $\mathcal{F}$ of graphs, let 
$$\mathcal{P}(n,\mathcal{F}):=\{\text{graphs $G$ on $n$ vertices}\mid\text{for all $H\in \mathcal{F}$,  $G$ is $H$-free}\}.$$ 
If $\mathcal{F}=\{ H \}$ consists of a single graph, we simplify the notation to $\mathcal{P}(n,H)$.
\end{definition}

We finally come to the main definition of this section, from\til\cite{BaBu}:

\begin{definition}\label{def:critical}
A graph $H$  is \emph{critical} if, for all non-negative integers $s$ and $t$ with $s+t=\chi_{\mathrm c}(H)-2$ and for all large enough $n$, there are at most two graphs in $\mathcal{P}(n,\mathcal{F}(H,s,t))$.
\end{definition}

\begin{example}\label{examplecritical}
For the graphs in Figure~\ref{fig:cyclescoloring}, we know from Example~\ref{examplecoloring} that $\chi_{\mathrm c}(C_5)=3$ and $\chi_{\mathrm c}(C_7)=4$. To determine whether $C_5$ is critical, one needs to consider the pairs $(s,t)$ such that $s+t=3-2$, namely $(1,0)$ and $(0,1)$. We have
$$\mathcal{F}(C_5,1,0)=\{
\begin{tikzpicture}[>=latex, scale=0.5]
\draw (0,0)--(.4,.4)--(.8,0);
\fill (0,0) circle (0.1);
\fill (.4,.4) circle (0.1);
\fill (.8,0) circle (0.1);
\end{tikzpicture}
\},\qquad \mathcal{F}(C_5,0,1)=\{
\begin{tikzpicture}[>=latex, scale=0.5]
\draw[thick] (0,0)--(.8,0);
\fill (0,0) circle (0.1);
\fill (.4,.4) circle (0.1);
\fill (.8,0) circle (0.1);
\end{tikzpicture}
\}.$$
Then $C_5$ is not critical, because for large $n$ the set $\mathcal{P}(n,\mathcal{F}(C_5,1,0))$ consists of more than two elements: in particular it always contains at least the graph on $n$ vertices with no edges, the graph on $n$ vertices with one edge, and the complete graph $K_n$. On the other hand, $C_7$ is critical: as $\chi_{\mathrm c}(C_7)=4$, we need to inspect values of $s$ and $t$ such that $s+t=4-2$, which give
$$\mathcal{F}(C_7,2,0)=\{
\begin{tikzpicture}[>=latex, scale=0.5]
\draw (0,0)--(.4,.4)--(.8,0);
\fill (0,0) circle (0.1);
\fill (.4,.4) circle (0.1);
\fill (.8,0) circle (0.1);
\end{tikzpicture}\,,\, 
\begin{tikzpicture}[>=latex, scale=0.5]
\draw[thick] (0,0)--(.8,0);
\fill (0,0) circle (0.1);
\fill (.4,.4) circle (0.1);
\fill (.8,0) circle (0.1);
\end{tikzpicture}
\},\qquad\mathcal{F}(C_7,1,1)=\{
\begin{tikzpicture}[>=latex, scale=0.5]
\fill (0,0) circle (0.1);
\fill (.4,.3) circle (0.1);
\end{tikzpicture}
\},\qquad\mathcal{F}(C_7,0,2)=\{
\begin{tikzpicture}[>=latex, scale=0.6]
\fill[white] (0,0) circle (0.1);%just to shift the other one up
\fill (0,.1) circle (0.1);
\end{tikzpicture}
\}.$$
Thus, for large $n$ we get
$$\mathcal{P}(n,\mathcal{F}(C_7,2,0))=\{K_n,\overline{K_n}\},
\quad\mathcal{P}(n,\mathcal{F}(C_7,1,1))=\{K_n\},
\quad\mathcal{P}(n,\mathcal{F}(C_7,0,2))=\emptyset$$
and therefore  $C_7$ is critical.
%For more examples of critical graphs, see Lemma~\ref{Hcritical}.
\end{example}

\begin{definition}
Let $\mathcal{A}(n)$ and $\mathcal{B}(n)$ be two families of graphs on $n$ vertices  such that $\mathcal{B}(n)\subseteq \mathcal{A}(n)$. We say that \emph{almost every graph in $\mathcal{A}(n)$ is in $\mathcal{B}(n)$} if
$$\lim_{n\to\infty} \frac{|\mathcal{A}(n)|}{|\mathcal{B}(n)|}=1.$$
We also write ``almost all'' or ``for almost all'', with the same connotation.
\end{definition}

%We prove an immediate observation that will be used later.
%	
%\begin{lemma}
%Let $\A(n)$, $\B(n)$ and $\C(n)$, with $\C(n)\subseteq\B(n)\subseteq\A(n)$, be families of graphs on $n$ vertices such that almost all graphs in $\A(n)$ are in $\B(n)$ and almost all graphs in $\B(n)$ are in $\C(n)$. Then almost all graphs in $\A(n)$ are in $\C(n)$.
%\end{lemma}
%
%\begin{proof}
%One may simply write
%$$\lim_{n\to\infty}\frac{|\A(n)|}{|\C(n)|}=\lim_{n\to\infty}\frac{|\A(n)|}{|\B(n)|}\frac{|\B(n)|}{|\C(n)|}=1.$$
%\end{proof}

The following is the main theorem of~\cite{BaBu} and one of the key tools for us.

\begin{lemma}[\cite{BaBu}, Theorem 1.9]\label{mainthmbb}
Let $H$ be a graph such that $\chi_{\mathrm c}(H)\ge3$. Then the following are equivalent:
\begin{enumerate}
\item almost every $H$-free graph  is an $(s,t)$-template, for some $s$ and $t$ such that  $(s,t)$ is a witnessing pair for $H$ (that is, $s+t=\chi_\mathrm c(H)-1$ and $(s,t)$ is not a covering pair for $H$);
\item $H$ is critical.
\end{enumerate}
\end{lemma}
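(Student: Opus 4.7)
The plan is to prove the equivalence in the two natural directions, following the general playbook of extremal results on induced-subgraph-free graphs (Bollob\'as--Thomason, Pr\"omel--Steger, etc.). Before starting, I would record one structural observation that will be used throughout: every graph $F \in \mathcal{F}(H,s,t)$ is an induced subgraph of $H$, so being $\mathcal{F}(H,s,t)$-free implies being $H$-free; conversely, $(s,t)$-templates are closed under induced subgraphs, so every $(s,t)$-template with $(s,t)$ a witnessing pair for $H$ is automatically $H$-free. Hence witnessing templates sit inside the $H$-free family.

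For $(2) \Rightarrow (1)$, assume $H$ is critical. First I would apply a stability result of Balogh--Bollob\'as--Simonovits type (which today is most cleanly accessed via the hypergraph container method, or via Szemer\'edi's regularity lemma) to show that almost every $H$-free graph $G$ on $n$ vertices admits an \emph{approximate} partition $V(G) = V_1 \sqcup \cdots \sqcup V_{\chi_{\mathrm c}(H)-1}$ in which each block is either nearly a clique or nearly an independent set, with only $o(n^2)$ edges in ``wrong'' positions. The number of parts is pinned down by the lower bound coming from counting witnessing templates. The next step, which is where criticality is used, would be to upgrade ``approximate'' to ``exact''. Fix a block $V_i$ and regard the remaining $\chi_{\mathrm c}(H)-2$ blocks as providing an $(s,t)$-template with $s+t = \chi_{\mathrm c}(H)-2$. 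Any ``exceptional'' edge or non-edge inside $V_i$ could, together with typical vertices chosen from the other blocks, be assembled into an induced copy of some $F \in \mathcal{F}(H,s,t)$ and hence into an induced $H$, contradicting $H$-freeness. The hypothesis that $|\mathcal{P}(n, \mathcal{F}(H,s,t))| \le 2$ for large $n$ would then force $V_i$ to be one of these at most two rigid shapes, giving a true $(s',t')$-template structure on $G$ with $s'+t' = \chi_{\mathrm c}(H)-1$ witnessing.

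For $(1) \Rightarrow (2)$, I would argue by contrapositive. Suppose $H$ is not critical; then there exist $s,t \ge 0$ with $s+t = \chi_{\mathrm c}(H)-2$ and arbitrarily large $n$ with at least three distinct graphs $G_1, G_2, G_3 \in \mathcal{P}(n, \mathcal{F}(H,s,t))$. Each $G_k$ is $H$-free by the opening observation. Now ``blow up'' the construction: take any of the $G_k$ on $m$ vertices and glue on a fresh clique or independent set of size $n-m$ (choosing cliques or independent sets to pad the missing color in a witnessing pair), obtaining three asymptotically comparable families of $H$-free graphs on $n$ vertices whose internal structure is determined by the $G_k$ rather than by the template part. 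By counting (the number of witnessing $(s',t')$-templates on $n$ vertices behaves like a controlled exponential in $n^2$, while the extra freedom coming from three different $G_k$ blown up in all possible ways produces strictly more variation in the relevant regime), one shows that almost every $H$-free graph cannot be concentrated on a single witnessing template class, ruling out~(1).

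The decidedly hard step is the first half of $(2) \Rightarrow (1)$: producing the approximate partition of a typical $H$-free graph is not elementary and requires either the container method or regularity, together with a suitable \emph{induced} removal lemma to absorb $o(n^2)$ bad edges. By contrast, the criticality-powered rigidification of the approximate partition into an exact template is essentially a finite case analysis, and the reverse direction is a counting argument. This is precisely why \cite{BaBu} is cited as a black box and invoked here as Lemma~\ref{mainthmbb}.
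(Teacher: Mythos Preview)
The paper does not prove this statement at all: Lemma~\ref{mainthmbb} is simply quoted as Theorem~1.9 of \cite{BaBu} and used as a black box, with no argument given. So there is no proof in the paper to compare your proposal against. You yourself note this in your final paragraph, and that observation is correct.

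As a separate matter, your sketch of how the Balogh--Butterfield argument goes is broadly in the right spirit for the direction $(2)\Rightarrow(1)$ (approximate structure via containers or regularity, then a cleaning step exploiting criticality), though of course the actual execution in \cite{BaBu} is substantially more delicate than a one-paragraph outline can convey. Your sketch of $(1)\Rightarrow(2)$ is less convincing: the counting argument you describe, in which three distinct graphs in $\mathcal{P}(n,\mathcal{F}(H,s,t))$ are ``blown up'' and claimed to produce more $H$-free graphs than witnessing templates can account for, does not obviously work as stated---padding a single $G_k$ of order $m$ by a clique or independent set of order $n-m$ gives only $2^{O(n)}$ graphs per choice of $G_k$, far too few to outrun the $2^{\Theta(n^2)}$ template count. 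The actual argument in \cite{BaBu} for this direction is different and more careful. But since the paper under review makes no attempt at either direction, none of this affects the comparison you were asked to make.
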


In fact, the part of the equivalence above that we will use several times in this paper is the implication from part 2 to part 1. We recall a very famous classical result, where by \textit{triangle} we mean the cycle $C_3$:
\begin{theorem}[Erd\H{o}s--Kleitman--Rothschild, \cite{EKR73}]\label{thm:erdosandfriends}
Almost all triangle-free graphs are bipartite.
\end{theorem}

\begin{example}
Consider the triangle $C_3$. One may check that $\chi_{\mathrm c}(C_3)=3$ and that the only witnessing pair is $(0,2)$. The $(0,2)$-templates are exactly the bipartite graphs, and since we know that almost all $C_3$-free graphs are $(0,2)$-templates by Theorem~\ref{thm:erdosandfriends}, we deduce by Lemma~\ref{mainthmbb} that $C_3$ is critical. It is of course possible to show that $C_3$ is critical also directly by definition, as we did above for $C_7$. 
In the case of $C_7$, we observed that the witnessing pairs are $(3,0)$ and $(2,1)$, hence by Lemma~\ref{mainthmbb} almost every $C_7$-free graph is a $(3,0)$-template or a $(2,1)$-template.
\end{example}

In general, one goal of extremal graph theory is to understand (at least asymptotically) the typical structure of the graphs that exclude a given induced subgraph. The first folklore result in this line of research is Theorem\til\ref{thm:erdosandfriends} above. Lemma\til\ref{mainthmbb} generalizes that, and it describes the typical structure of graphs excluding a \emph{critical} induced subgraph in terms of cliques and independent sets. Cliques and independent sets constitute the simplest kind of graphs, and in this sense Lemma\til\ref{mainthmbb} settles this case by \emph{characterizing} the induced subgraphs for which such a simple description exists. (We remark once more that, despite this being the simplest case, the concept of a critical graph is quite technical.) Very little is known if the induced subgraph one wants to exclude is not critical, and in the known cases the ``typical structure'' has a more complicated description than in the case of critical graphs. For instance, Theorem\til1.1 of\til\cite{hcforgraphs} states that for $k\ge6$, almost all induced-$C_{2k}$-free graphs can be covered by $k-2$ cliques and a graph whose complement is a disjoint union of stars and triangles.

\subsection{Combinatorial commutative algebra: Betti numbers of edge ideals}\label{sec:backalgebra}

A \emph{simplicial complex} $\Delta$ on vertex set $V$ is a family of subsets of $V$ such that, whenever $\sigma\in\Delta$ and $\sigma'\subseteq\sigma$, we have $\sigma'\in\Delta$. We call the elements of $\Delta$  its \emph{faces}, and for a face $\sigma$ we say that  the \emph{dimension} of $\sigma$ is $\dim\sigma:=|\sigma|-1$. %The dimension of $\Delta$ is the highest dimension of any of its faces. 
The simplicial complexes that we will consider in this paper are the following:

\begin{definition}
Let $G$ be a finite simple graph with vertex set $V$. The \emph{independence complex} of $G$, denoted $\mathrm{Ind}(G)$, is the simplicial complex with vertex set $V$ whose faces are the independent sets of $G$.
\end{definition}

One may define the \emph{reduced homology} of a simplicial complex $\Delta$ over a field $\mathbb{K}$. We refer to~\cite{MiSt} for a brief introduction. In short, denoting by $F_i$ the set of faces of $\Delta$ of dimension $i$ and defining suitable differentials, one gets a chain complex
$$0\longrightarrow\mathbb{K}^{F_{n-1}}\xrightarrow{\partial_{n-1}}
\dots\longrightarrow\mathbb{K}^{F_i}\stackrel{\partial_i}\longrightarrow\mathbb{K}^{F_{i-1}}\xrightarrow{\partial_{i-1}}\dots
\stackrel{\partial_1}\longrightarrow\mathbb{K}^{F_0}\stackrel{\partial_0}\longrightarrow \mathbb{K}^{F_{-1}}\longrightarrow 0,$$
and the \emph{$i$-th reduced homology} of $\Delta$ over  $\mathbb{K}$ is defined as
$$\tilde H_i(\Delta;\mathbb{K}):=\ker(\partial_i)/\textrm{im}(\partial_{i+1}).$$
In this paper we will fix a field $\mathbb K$ and write simply $\tilde H_i(\Delta)=\tilde H_i(\Delta;\mathbb{K})$. %It is a straightforward consequence of the definition that 
%$$\tilde H_i(\Delta)=0\quad\text{for all}\quad i>\dim(\Delta).$$
The homology computations in this paper are not particularly involved, especially in the context of our main results. The facts about homology that we use are in fact very standard and we refer for instance to\til\cite{MiSt} for the proofs.

\begin{definition}
Fixed a field $\mathbb K$, for a finite simple graph $G=(V_G,E_G)$, one defines the  \emph{edge ideal} of $G$ as
$$I_G:=(x_vx_w\mid \{v,w\}\in E_G)$$ inside the polynomial ring $S=\mathbb{K}
[x_v\mid v\in V_G]$. 
\end{definition}

\begin{remark}
Notice that we consider unlabeled graphs almost everywhere in the paper. The edge ideal of an unlabeled graph is not well defined, but the main results of this paper concern homological invariants of edge ideals, and these invariants are the same for any labeling of $G$. Therefore  we shall talk about the ``edge ideal of an unlabeled graph''.
\end{remark}

The \emph{(graded) Betti numbers} of a finitely generated, graded $S$-module $M$ are numerical invariants of $M$. For any $i\in\mathbb Z_{\ge0}$ and $j\in\mathbb{Z}$, they are written $\beta_{i,j}(M)$ or just $\beta_{i,j}$ if $M$ is clear from the context. This is not fundamental for the purposes of our paper, but for the sake of completeness we recall that the graded Betti numbers can be defined in terms of resolutions: the free modules in a minimal graded free resolution $\cdots \to F_2\to F_1\to F_0$ of $M$ can be written in a unique way as
$$F_i=\bigoplus_{j\in\mathbb Z}S(-j)^{\beta_{i,j}},$$
so that $\beta_{i,j}$ is the number of copies of $S$ shifted by $j$ in $F_i$. Alternatively, 
we recall that $\beta_{i,j}(M)=\dim_\mathbb K\mathrm{Tor}_i(M,\mathbb K)_j$, where 
$\mathrm{Tor}_i(-,\mathbb K)$ is the $i$-th left derived functor of $-\otimes_S\mathbb K$, and the dimension of the $j$-th graded piece of $\mathrm{Tor}_i(M,k)$ is taken as a $\mathbb K$-vector space. We refer the interested reader to\til\cite{Pe} for additional details.

The Betti numbers of $M$ are usually arranged in the so-called \emph{Betti table of $M$}, so that in column $i$ and row $j$ one puts the number $\beta_{i,i+j}$:
$$\begin{array}{c|ccccc}
& 0&1&2&3& \cdots\\
\hline
0&  \beta_{0,0} & \beta_{1,1} & \beta_{2,2} & \beta_{3,3}&\cdots\\
1&  \beta_{0,1} & \beta_{1,2} & \beta_{2,3} & \beta_{3,4}&\\
2&  \beta_{0,2} & \beta_{1,3} & \beta_{2,4} & \beta_{3,5}&\\
3&  \beta_{0,3} & \beta_{1,4} & \beta_{2,5} & \beta_{3,6}\\
%4&  \beta_{0,4} & \beta_{1,5} & \beta_{2,6} & \beta_{3,7}\\
%5&  \beta_{0,5} & \beta_{1,6} & \beta_{2,7} & \beta_{3,8}\\
\vdots&  \vdots &  &  &  & \ddots
\end{array}$$

The modules $M$  considered in this paper are always edge ideals $I_G$, and for this purpose one may consider the following special case of Hochster's formula (see\til\cite{MiSt}) as a definition of Betti numbers for edge ideals: for the Betti numbers in row $r$ of the table, one has
$$\beta_{i,r+i}(I_G)=\sum_{W\in\binom{V_G}{r+i}}\dim_\mathbb{K}\tilde H_{r-2}\big(\mathrm{Ind}(G)[W]\big).$$ 
In the case of edge ideals, many Betti numbers are known to vanish. First of all, the Betti numbers in rows $0$ and $1$ of the table are always zero, so the Betti numbers that we need to consider are on row $2$ onwards.
Secondly, by the \emph{main diagonal} of the Betti table we mean the diagonal consisting of the numbers $\beta_{i,\,2(i+1)}$, for $i\ge0$. It is well known that the Betti numbers to the left of the main diagonal are zero for edge ideals.

Note that one may also write the Betti numbers on the main diagonal as $\beta_{r-2,\,2(r-1)}$, where $r\ge2$ is the row index.
In general, the numbers on row\til$r$ can be written as $\beta_{r-2+p,\,2(r-1)+p}$, for some integer $p$ that shows how far horizontally that Betti number is from the main diagonal. 

It turns out that there are only  a finite number of non-zero entries in the Betti table, and the \emph{(Castelnuovo--Mumford) regularity} of $M$
$$\mathrm{reg}(M):=\max\{j\mid \beta_{i,i+j}(M)\ne0\text{ for some $i$}\}$$
is the highest index of a row with a non-zero entry.

%All in all,  the Betti table of an edge ideal  $I_G$ looks like this:
%$$\begin{array}{c|cccccc}
%& 0&1&2&3&4 &\cdots\\
%\hline
%2&  \beta_{0,2} & \beta_{1,3} & \beta_{2,4} & \beta_{3,5}&\beta_{4,6}&\cdots\\
%3&  - & \beta_{1,4} & \beta_{2,5} & \beta_{3,6}&\beta_{4,7}\\
%4&  - & - & \beta_{2,6} & \beta_{3,7}&\beta_{4,8}\\
%5&  - & - & - & \beta_{3,8}&\beta_{4,9}\\
%6&-&-&-&-&\beta_{4,10}\\
%\vdots& \vdots & &  &  & &\ddots
%\end{array}$$

%\begin{theorem}
%Maybe Fr\"oberg's theorem could fit here, to refer to it later?
%\end{theorem}

%%%%%%%%%%%%%%%%%%%%%%%%%%%%

\section{Parabolic clusters and parabolic Betti numbers}\label{sec:parabolic}

A graph is \emph{$k$-partite} if the vertex set can be partitioned into $k$ independent sets. Following the notation of~\cite{Dies}, we denote by $K_{a_1,a_2,\dots,a_k}$ the \emph{complete $k$-partite graph} whose independent sets have orders $a_1,a_2,\dots,a_k$ and where any two vertices in different independent sets are connected with an edge.  Recall that $K_a$ denotes the clique on $a$ vertices. We moreover denote by $\overline G$ the \emph{complement} of a graph $G$, that is, the graph on the vertex set of $G$ with exactly the edges that $G$ does not have. 

\begin{definition}\label{def:parabcluster}
A \emph{$k$-cluster} is the disjoint union of $k$ cliques, or equivalently the complement of a complete $k$-partite graph. If the number $k$ of cliques  is clear, we omit it. We denote a $k$-cluster by 
$$\overline{K_{a_1,\dots,a_k}}=K_{a_1}\sqcup K_{a_2}\sqcup\dots\sqcup K_{a_k},$$ 
where $a_i$ is the number of vertices in the $i$-th clique, and we assume $a_1\le a_2\le\dots\le a_k$. Let $k\ge2$.  If $a_1=2$ and $2\le a_i\le i$ for all $i\in\{2,3,\dots,k\}$, then we say that the $k$-cluster $\overline{K_{a_1,\dots,a_k}}$ is \emph{parabolic}.
\end{definition}

\begin{example}\label{ex:parabclustersforsmallk}
There is only one parabolic $2$-cluster: $\overline{K_{2,2}}$; two parabolic $3$-clusters: $\overline{K_{2,2,2}}$, $\overline{K_{2,2,3}}$; and five  parabolic $4$-clusters: $\overline{K_{2,2,2,2}}$, $\overline{K_{2,2,2,3}}$, $\overline{K_{2,2,2,4}}$,  $\overline{K_{2,2,3,3}}$, $\overline{K_{2,2,3,4}}$.
\end{example}

\begin{remark}
Our original proof of Proposition\til\ref{prop:clusterInTempV2} revolved around counting the parabolic $k$-clusters. Although we replaced that by a simpler argument, we think it is still worthwhile to mention that the number of parabolic $k$-clusters is the Catalan number $$C_{k-1}=\frac1k\binom{2(k-1)}{k-1}.$$
%Recall that, for a natural number $n$, the \emph{$n$-th Catalan number} is 
%$$C_n=\frac1{n+1}\binom{2n}n.$$
%Recall moreover that a \emph{Dyck path of length $2n$} is a monotonic lattice path on an $(n\times n)$-grid from the point $(0,0)$ to $(n,n)$, below the diagonal. It is well known that the number of Dyck paths of length $2n$ is equal to the $n$-th Catalan number $C_n$. Next we show that there is a bijection between Dyck paths of length $2(k-1)$ and parabolic $k$-clusters.
%For a parabolic $k$-cluster $C= \overline{K_{a_1,\dots,a_k}}$, set $h_0=a_1-2$ and $h_i=a_{i+1}-1$ for $1\le i \le k-1$; the bijection is defined by associating to  $C$ the monotonic lattice path in a $(k-1)\times(k-1)$-square with height function $h$. This path is below the diagonal, hence it is a Dyck path. In particular, we conclude that the number of parabolic $k$-clusters is  $C_{k-1}$.
\end{remark}

%Recall that, for a natural number $n$, the \emph{$n$-th Catalan number} is 
%$$C_n=\frac1{n+1}\binom{2n}n.$$
%
%\begin{definition}
%A \emph{Dyck path of length $2n$} is a monotonic lattice path on an $(n\times n)$-grid from the point $(0,0)$ to $(n,n)$, below the diagonal.
%\end{definition}
%
%It is well-known that the number of Dyck paths of length $2n$ is equal to the $n$-th Catalan number $C_n$. We use this fact to count the number of parabolic $k$-clusters:
%
%\begin{proposition}\label{prop:clusterCatalan}
%There is a bijection between Dyck paths of length $2(k-1)$ and parabolic $k$-clusters. In particular the number of parabolic $k$-clusters is  $C_{k-1}$.
%\end{proposition}
%
%\begin{proof}
%For a parabolic $k$-cluster $C= \overline{K_{a_1,\dots,a_k}}$, set $h_0=a_1-2$ and $h_i=a_{i+1}-1$ for $1\le i \le k-1$. The bijection is defined by associating to  $C$ the monotonic lattice path in a $(k-1)\times(k-1)$-square with height function $h$. This path is below the diagonal, hence it is a Dyck path.
%\end{proof}

Our next step is to relate $k$-clusters and Betti numbers.

\begin{lemma}\label{homologyoffatmatching}
Let $C$ be a $k$-cluster with each clique containing at least two vertices. Then
$$\tilde H_i(\mathrm{Ind}(C))\ne0\quad\text{if and only if}\quad i=k-1.$$
\end{lemma}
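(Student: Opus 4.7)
The plan is to reduce the computation to a standard topological identity about joins. First I would recall the well-known fact that for a disjoint union of graphs, the independence complex decomposes as a simplicial join:
\[
\mathrm{Ind}(G_1 \sqcup G_2) = \mathrm{Ind}(G_1) * \mathrm{Ind}(G_2).
\]
Iterating gives
\[
\mathrm{Ind}(C) = \mathrm{Ind}(K_{a_1}) * \mathrm{Ind}(K_{a_2}) * \cdots * \mathrm{Ind}(K_{a_k}).
\]
Next I would identify each factor: since in a clique the only independent sets are the empty set and singletons, $\mathrm{Ind}(K_{a_j})$ is exactly a discrete space of $a_j$ points. Its reduced homology over $\mathbb{K}$ is concentrated in degree $0$ with $\tilde H_0(\mathrm{Ind}(K_{a_j})) = \mathbb{K}^{a_j - 1}$, and this is \emph{nonzero} precisely because $a_j \geq 2$.

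Then I would apply the Künneth-type formula for the reduced homology of a join, which over a field takes the clean form
\[
\tilde H_n(X * Y) \cong \bigoplus_{i + j = n - 1} \tilde H_i(X) \otimes_{\mathbb{K}} \tilde H_j(Y).
\]
Iterating this $k-1$ times yields
\[
\tilde H_n(\mathrm{Ind}(C)) \cong \bigoplus_{i_1 + \cdots + i_k = n - (k-1)} \tilde H_{i_1}(\mathrm{Ind}(K_{a_1})) \otimes \cdots \otimes \tilde H_{i_k}(\mathrm{Ind}(K_{a_k})).
\]
Since each factor on the right is supported only in degree $0$, the only potentially nonzero summand is the one with $i_1 = \cdots = i_k = 0$, which forces $n = k - 1$. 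That summand equals $\bigotimes_{j=1}^k \mathbb{K}^{a_j - 1}$, a tensor product of nonzero spaces and hence nonzero.

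There is essentially no obstacle here: the argument is a bookkeeping exercise on top of two standard facts (independence complex of a disjoint union is a join, and reduced homology of a join tensors with a degree shift). The role of the hypothesis $a_j \geq 2$ for all $j$ is exactly to guarantee that no factor in the join has vanishing reduced homology, which would otherwise annihilate the entire tensor product and give zero in \emph{every} degree. The only mild care needed is to confirm one is working with reduced (not unreduced) homology throughout, so that the degree shift in the join formula is applied correctly.
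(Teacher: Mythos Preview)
Your argument is correct and fully self-contained, but it is a genuinely different route from the one the paper takes. The paper simply observes that a $k$-cluster is a chordal graph and invokes the known result that the independence complex of a chordal graph is shellable; since the maximal independent sets of $C$ all have exactly $k$ elements, $\mathrm{Ind}(C)$ is pure of dimension $k-1$, and pure shellability forces the reduced homology to be concentrated in degree $k-1$. Your approach instead exploits the explicit product structure: you decompose $\mathrm{Ind}(C)$ as an iterated join of discrete sets and apply the K\"unneth formula for joins. The advantage of your method is that it is more elementary (no shellability machinery needed) and it actually computes the homology exactly, giving $\dim_{\mathbb{K}}\tilde H_{k-1}(\mathrm{Ind}(C)) = \prod_{j=1}^k (a_j - 1)$; the paper's citation to shellability, by contrast, does not immediately yield the nonvanishing in degree $k-1$ without a separate check that the wedge of spheres is nonempty. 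On the other hand, the paper's approach situates the lemma inside a broader structural fact about chordal graphs that the reader may find useful context.
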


\begin{proof}
Notice that $C$ is a chordal graph and that $\mathrm{Ind}(C)$ is a pure complex of dimension $k-1$. The statement follows from the fact that the independence complex of a chordal graph is shellable, and hence it is homotopy equivalent to a wedge sum of spheres of that dimension (see\til\cite{DE09,VTV08,W09}).
\end{proof}

One may also prove the lemma above more directly by induction on $k$ and on the number of vertices: in the induction step, if $v$ is a vertex in some clique $K$ with more than two vertices, one may consider the Mayer--Vietoris long exact sequence
$$\xymatrix@=1.5em{
\dots\ar[r]^(.25){\partial_{i+1}}& H_i(\mathrm{Ind}(G- K))\ar[r]&  H_i(\mathrm{Ind}(G- v))\ar[r]&  H_i(\mathrm{Ind}(G))\ar[lld]^(.25){\partial_i} &\\
& H_{i-1}(\mathrm{Ind}(G- K))\ar[r]&  H_{i-1}(\mathrm{Ind}(G- v))\ar[r]&  H_{i-1}(\mathrm{Ind}(G)) \ar[r]^(.7){\partial_{i-1}}&\dots\\
}$$
from which the result follows.

\begin{lemma}\label{moregeneralthani2i}
Let $k\ge2$ and let $C=\overline{K_{a_1,\dots,a_k}}$ be a $k$-cluster with $a_i\ge2$ for all $i$. Denote $p=a_1+\dots+a_k-2k$. If a graph $G$ is such that
$$\beta_{k-1+p,\,2k+p}(I_G)=0,$$
then $G$ is $C$-free.
\end{lemma}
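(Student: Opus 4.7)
The plan is to argue by contrapositive via Hochster's formula: assuming $G$ contains $C$ as an induced subgraph, we exhibit a single subset of $V_G$ whose contribution to the Hochster sum is nonzero.

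First, I would record the relevant indices. Setting $i = k-1+p$ and $j = 2k+p$, we have
$$j - i - 2 = (2k+p) - (k-1+p) - 2 = k-1,$$
and $j = 2k + p = a_1 + a_2 + \dots + a_k = |V_C|$. Hochster's formula therefore gives
$$\beta_{k-1+p,\,2k+p}(I_G) = \sum_{W \in \binom{V_G}{2k+p}} \dim_{\mathbb{K}} \tilde{H}_{k-1}\bigl(\mathrm{Ind}(G)[W]\bigr),$$
where each summand is a non-negative integer.

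Next, assume for contradiction that $G$ contains $C$ as an induced subgraph, and let $W \subseteq V_G$ be the vertex set of such an induced copy. Then $|W| = 2k+p$ and $G[W] \cong C$. A standard observation (which I would note explicitly) is that $\mathrm{Ind}(G)[W]$, the induced subcomplex of $\mathrm{Ind}(G)$ on $W$, coincides with $\mathrm{Ind}(G[W]) = \mathrm{Ind}(C)$: its faces are the subsets of $W$ that are independent in $G$, equivalently independent in $G[W]$. Since every clique of $C$ has at least two vertices, Lemma~\ref{homologyoffatmatching} applies and yields
$$\tilde{H}_{k-1}\bigl(\mathrm{Ind}(G)[W]\bigr) = \tilde{H}_{k-1}(\mathrm{Ind}(C)) \neq 0.$$

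Finally, because all terms in the Hochster sum are non-negative and the term indexed by this particular $W$ is strictly positive, we conclude $\beta_{k-1+p,\,2k+p}(I_G) \neq 0$, contradicting the hypothesis. There is no real obstacle here beyond bookkeeping: the statement is essentially the combinatorial translation of Hochster's formula together with Lemma~\ref{homologyoffatmatching}. The only point that deserves care is the identification $\mathrm{Ind}(G)[W] = \mathrm{Ind}(G[W])$, which is immediate but worth stating so that the appeal to Lemma~\ref{homologyoffatmatching} is unambiguous.
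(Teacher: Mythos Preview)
Your proof is correct and follows the same approach as the paper: write out Hochster's formula, observe that $|V_C|=2k+p$, and invoke Lemma~\ref{homologyoffatmatching} to see that an induced copy of $C$ contributes a positive summand. The paper's version is terser, but your added bookkeeping (the index computation and the identification $\mathrm{Ind}(G)[W]=\mathrm{Ind}(G[W])$) is all accurate and clarifying.
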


\begin{proof}
By Hochster's formula we have
$$\beta_{k-1+p,2k+p}(I_G)=\sum_{W\in\binom{V_G}{2k+p}}\dim_\mathbb{K} \tilde H_{k-1}\big(\mathrm{Ind}(G)[W]\big).$$
Notice that the $k$-cluster $C$ has $2k+p=a_1+\dots+a_n$ vertices, and by Lemma~\ref{homologyoffatmatching} we know that $\tilde H_{k-1}(\mathrm{Ind}(C))\ne0$.
\end{proof}

\begin{example}
We illustrate the phenomenon of Lemma~\ref{moregeneralthani2i} for \emph{parabolic} $k$-clusters with $2\le k\le 5$. %In the case of the parabolic $k$-clusters listed in Example~\ref{ex:parabclustersforsmallk}. 
We omit the commas in the subscripts of the clusters to lighten the notation:
$$\begin{array}{c||c|c|c|c|c|c|c|c|c|c}
&1&2&3&4&5&6&7&8&9&10\\
\hline
\hline
3&\beta_{1,4}&&&&&&&&\\
&\overline{K_{22}}&&&&&&&&\\
\hline
4&&\beta_{2,6}&\beta_{3,7}&&&&&&&\\
&&\overline{K_{222}}&\overline{K_{223}}&&&&&&&\\
\hline
5&&&\beta_{3,8}&\beta_{4,9}&\beta_{5,10}&\beta_{6,11}&&&\\
&&&\overline{K_{2222}}&\overline{K_{2223}}&\overline{K_{2224}}&\overline{K_{2234}}&&&\\
&&&&&\overline{K_{2233}}&&&&\\
\hline
6&&&&\beta_{4,10}&\beta_{5,11}&\beta_{6,12}&\beta_{7,13}&\beta_{8,14}
&\beta_{9,15}&\beta_{10,16}\\
&&&&&&&\overline{K_{22225}}&\overline{K_{22235}}
&\overline{K_{22245}}&\\
&&&&\overline{K_{22222}}&\overline{K_{22223}}&\overline{K_{22224}}&\overline{K_{22234}}
&\overline{K_{22244}}&\overline{K_{22245}}&\overline{K_{22345}}\\
&&&&&&\overline{K_{22233}}&\overline{K_{22333}}
&\overline{K_{22334}}&\overline{K_{22344}}&\\
\end{array}$$
By Lemma~\ref{moregeneralthani2i}, if a Betti number $\beta_{i,j}(I_G)$ written in the table above is zero, then $G$ is $C$-free, for any parabolic cluster $C$ written in the same cell as $\beta_{i,j}(I_G)$.
\end{example}

We give a name to the Betti numbers whose vanishing implies the absence of \emph{parabolic} $k$-clusters as induced subgraphs:

\begin{definition}\label{parabBnbs}
Let $r\ge3$. A Betti number $\beta_{i,i+r}$ on the $r$-th row of the Betti table is called  \emph{parabolic} if
$$r-2\le i\le r-2+\binom{r-1}2.$$
\end{definition}

Recall from Section~\ref{sec:backalgebra} that the Betti numbers on the $r$-th row of the Betti table can be written as $\beta_{r-2+p,\,2(r-1)+p}$, where $p$ represents the distance from the main diagonal $\beta_{0,2},\beta_{1,4},\beta_{2,6},\dots$ In several proofs it will be helpful to keep in mind that parabolic Betti numbers on the $r$-th row are exactly those of the form
$$\beta_{r-2+p,\,2(r-1)+p}\qquad\text{for $r\ge2$ and $0\le p\le\binom{r-1}2$.}$$
More explicitly, the region of the Betti table consisting of the parabolic Betti numbers is bounded by the main diagonal of numbers $\beta_{i,j}$ with
$$i=r-2\quad\text{and}\quad j=2(r-1),\qquad\text{for $r\ge3$,}$$
and the parabola consisting of the numbers $\beta_{i,j}$ with
\begin{align*}
i&=r-2+\binom{r-1}2=\frac{(r-3)(r-2)}2\\
j&=2(r-1)+\binom{r-1}2=\frac{(r-1)(r+2)}2,
\end{align*}
for $r\ge3$.
In Figure~\ref{fig:starredregion} 
the parabolic Betti numbers in the top-left portion of the Betti table are marked by gray squares.

\begin{remark}
The parabolic Betti numbers on the $k$-th row of the Betti table are related to the $k$-parabolic clusters as a special case of Lemma~\ref{moregeneralthani2i}. However, while that lemma holds for any cluster, we will only consider parabolic clusters since these have the property of being critical. This is discussed in the following section.
\end{remark}

\subsection{Parabolic clusters are critical}

\begin{lemma}\label{Hcritical}
Let $C$ be a parabolic $k$-cluster, with $k\ge2$. Then the following hold:
\begin{enumerate}
\item[(a)] $\chi_{\mathrm c}(C)=k+1$.
\item[(b)] $(k-1,1)$ is the only witnessing pair for $C$. 
\item[(c)] $C$ is critical.
\end{enumerate}
\end{lemma}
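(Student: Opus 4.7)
The plan is to reduce all three parts to a single combinatorial criterion. Since $C = K_{a_1} \sqcup \cdots \sqcup K_{a_k}$ is a disjoint union of cliques, every clique in a vertex partition of $V(C)$ lies inside a single component $K_{a_i}$, and every independent set meets each $K_{a_i}$ in at most one vertex. Setting $n_i := |U \cap K_{a_i}|$ for $U \subseteq V(C)$, this yields the criterion that $C[U]$ is an $(s,t)$-template if and only if $|\{i : n_i > t\}| \le s$; in particular $(s,t)$ is a covering pair for $C$ itself iff $|\{i : a_i > t\}| \le s$.

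For part (a), I would use the parabolic bound $a_i \le i$, which immediately gives $|\{i : a_i > t\}| \le k - t$. Then any $(s,t)$ with $s + t = k + 1$ is covering since $s = k + 1 - t \ge k - t$, whereas $(k-1, 1)$ fails the criterion because $|\{i : a_i > 1\}| = k > k - 1$; this gives $\chi_{\mathrm c}(C) = k + 1$. For part (b), the same count shows that every other pair with $s + t = k$ satisfies $|\{i : a_i > t\}| \le k - t = s$, hence is covering, so $(k-1, 1)$ is the unique witnessing pair.

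For part (c), I would fix $(s,t)$ with $s + t = k - 1$ and analyse $\mathcal F(C, s, t)$, writing $F = C - U$ and $b_i = a_i - n_i$. When $t = 0$, the criterion forces $n_j = 0$ for some $j$, so $K_{a_j} \supseteq K_2$ is induced in every $F$; since $K_2 = K_{a_1}$ is itself attained by choosing $U = V(C) \setminus V(K_{a_1})$, we get $\mathcal F(C, k-1, 0) = \{K_2\}$ and $\mathcal P(n, \{K_2\}) = \{\overline{K_n}\}$. When $t = 1$, the criterion $|\{i : n_i > 1\}| \le k - 2$ forces at least two components of $F$ to be non-empty, so $2K_1$ is induced in every $F$; conversely, $2K_1 \in \mathcal F$ by removing one vertex from each of the two components of size $2$ (present since $a_1 = a_2 = 2$), giving $\mathcal F(C, k-2, 1) = \{2K_1\}$ and $\mathcal P(n, \{2K_1\}) = \{K_n\}$. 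When $t \ge 2$, two sub-cases arise: if some $i \ge t + 1$ satisfies $a_i \le t$, then $|\{i : a_i > t\}| \le k - t - 1 = s$ and $\emptyset \in \mathcal F(C, s, t)$; otherwise $a_i \ge t + 1$ for every $i \ge t + 1$, forcing $a_{t+1} = t + 1$ by the parabolic bound, and removing one vertex from $K_{a_{t+1}}$ yields $K_1 \in \mathcal F(C, s, t)$. In either sub-case $\mathcal P(n, \mathcal F(C, s, t)) = \emptyset$ for $n \ge 1$.

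Combining the three regimes, $|\mathcal P(n, \mathcal F(C, s, t))| \le 1$ for every $(s,t)$ with $s + t = \chi_{\mathrm c}(C) - 2$, which establishes criticality. I expect the main obstacle to be identifying the minimal elements of $\mathcal F(C, s, t)$ in each regime: the parabolic bound $a_i \le i$ is used essentially in the $t \ge 2$ case to guarantee $\emptyset$ or $K_1$ in $\mathcal F$, while the boundary equalities $a_1 = a_2 = 2$ are what allow the realisation of $2K_1$ in the $t = 1$ case.
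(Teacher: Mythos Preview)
Your proof is correct and reaches exactly the same computations of $\mathcal{F}(C,s,t)$ as the paper (namely $\{K_2\}$, $\{\overline{K_2}\}$, and a subset of $\{\emptyset,K_1\}$), but you arrive there by a cleaner route. The paper handles part~(a) by induction on $k$ and parts~(b),~(c) by ad hoc case analysis; you instead extract at the outset the single criterion
\[
C[U]\text{ is an }(s,t)\text{-template}\iff \bigl|\{i:n_i>t\}\bigr|\le s,
\]
and then all three parts become direct counting with the parabolic bound $a_i\le i$. This unification is a genuine improvement in exposition: it makes the role of the inequality $a_i\le i$ transparent (it is exactly what forces $\{i:a_i>t\}\subseteq\{t+1,\dots,k\}$), and it removes the need for an inductive step. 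The one place where your write-up is slightly terse is the $t\ge 2$ sub-case~(ii): it would help to state explicitly that after deleting one vertex from $K_{a_{t+1}}$ the set $\{j:n_j>t\}$ equals $\{t+2,\dots,k\}$, of size exactly $k-t-1=s$, so that $C[U]$ is indeed an $(s,t)$-template and $K_1$ is attained. Otherwise the argument is complete.
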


\begin{proof}
Write $C=\overline{K_{a_1,\dots,a_k}}$, where $a_1,\dots,a_k$ are the orders of the cliques of $C$.

(a) In order to  prove that $\chi_{\mathrm c}(C)=k+1$,  first of all we show that  if $s$ and $t$ are non-negative integers such that $s+t=k+1$, then $(s,t)$  is a covering pair for~$C$. We do this by induction on $k$. For $k=2$ there is only one cluster to consider: the matching consisting of two disjoint edges. One may check that $(3,0)$, $(2,1)$, $(1,2)$ and $(0,3)$ are covering pairs  in this base case. To continue the induction, let $k>2$. If~$s>1$, then $(s,t)$ is a covering pair for $C$ because $(s-1,t)$ is a covering pair for the $(k-1)$-cluster $K_{a_1}\sqcup\dots\sqcup K_{a_{k-1}}$ by the induction hypothesis: one is simply simultaneously adding a clique and a color in the induction step, and one may use that new color for the new clique. So we only need to check that the remaining pair~$(0,k+1)$ is a covering pair for $C$. Indeed, the cliques of $C$ have order at most $k$, so $k$ independent sets are already enough to cover even the largest possible parabolic $k$-clusters.

Thus, every pair $(s,t)$ with $s+t=k+1$ is a covering pair for $C$, which means that $\chi_{\mathrm c}(C)\le k+1$. In order to show that the equality holds, we need to exhibit a witnessing pair $(s,t)$ with $s+t=k$. For this we pick $(s,t)=(k-1,1)$. The pair $(k-1,1)$ is indeed not a covering pair for $C$, that is, we cannot cover $C$ with $k-1$ cliques and one independent set. Indeed, we may use the $k-1$ cliques at our disposal to cover at most only $k-1$ of the $k$ cliques of the cluster $C$. The remaining clique $K_{a_i}$ of $C$ cannot be covered by an independent set, because we have $a_i\ge2$ by definition of parabolic cluster.

(b) In the proof of part (a) we show that $(k-1,1)$ is a witnessing pair for $C$. Now we prove that any other pair $(s,t)$ with $s+t=k$ is a covering pair for $C$. The pair $(k,0)$  is a covering pair by definition of a $k$-cluster. And for any pair $(s,k-s)$, with $0\le s\le k-2$, one may cover the $s$ subgraphs $K_{a_{k-s+1}},\dots,K_{a_k}$ of $C$ with $s$ cliques, and cover the remaining cliques $K_{a_1},\dots,K_{a_{k-s}}$ with the $k-s$ independent sets, which are enough because $a_i\le k-s$ for $i\le k-2$, by definition of parabolic cluster.

(c) We need to consider the families of graphs $\mathcal{F}(C,s,t)$---see Definition~\ref{defFF}---for all pairs $(s,t)$ with $s+t=\chi_{\mathrm c}(C)-2=k-1$. The easiest cases are those of the pairs $(k-1,0)$ and $(k-2,1)$: if we cover as much of  $C$ as possible with $k-1$ cliques, we are left with the smallest clique of $C$, which is $K_2$. And if we cover as much of $C$ as possible with $k-2$ cliques and one independent set, we are left with two isolated vertices, which one may write as $\overline{K_2}$. These are the ways to cover $C$ with the pairs $(k-1,0)$ and $(k-2,1)$ that leave the least number of vertices uncovered. Indeed, all other ``residue graphs'' that are left after covering $C$ with the pairs $(k-1,0)$ or $(k-2,1)$ contain respectively $K_2$ and $\overline{K_2}$ as induced subgraphs. That is, we have
$$\mathcal{F}(C,k-1,0)=\{K_2\}\quad\text{and}\quad\mathcal{F}(C,k-2,1)=\{\overline{K_2}\}.$$
As for the  remaining cases, we claim that for any pair $(s,k-1-s)$, with $0\le s\le k-3$, one has
$$\mathcal{F}(C,s,k-1-s)\subseteq\{\emptyset,K_1\}.$$
We may cover the $s$ largest cliques $K_{a_{k-s+1}},\dots,K_{a_k}$, and we are left with $K_{a_1},\dots,K_{a_{k-s}}$.
With the $k-1-s$ independent sets at our disposal we may cover all of these, except when $a_{k-s}=k-s$, in which case we cover everything except one single vertex. So in this case $\mathcal{F}(C,s,k-1-s)=\{K_1\}$, and otherwise if $a_{k-s}<k-s$, then $\mathcal{F}(C,s,k-1-s)=\{\emptyset\}$. In order to show that $C$ is critical, one needs to check that, for all $s$ and $t$ with $s+t=\chi_{\mathrm c}(C)-2$ and for all large $n$, the family of graphs $\mathcal{P}(n,\mathcal{F}(C,s,t))$---see Definition~\ref{defPP}---contains at most two elements. By the discussion above, there are exactly four cases to check: for all large $n$, we have 
\begin{align*}
\qquad\mathcal{P}(n,\{K_2\})&=\{\overline{K_n}\},&\mathcal{P}(n,\{K_1\})&=\emptyset,\qquad\\
\mathcal{P}(n,\{\overline{K_2}\})&=\{K_n\},&\mathcal{P}(n,\{\emptyset\})&=\emptyset.
\end{align*}
All of these sets have cardinality smaller than $2$, so indeed $C$ is critical.
\end{proof}

\begin{theorem}\label{provedconjecture}
Let $k\ge2$ and let $H$ be a parabolic $k$-cluster. Then almost every $H$-free graph  is a $(k-1,1)$-template.
\end{theorem}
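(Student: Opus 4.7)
The plan is to synthesize Lemma~\ref{Hcritical} with the main theorem of Balogh--Butterfield (Lemma~\ref{mainthmbb}); essentially all the work has already been done in establishing the preceding lemma.

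First I would verify the hypotheses of Lemma~\ref{mainthmbb}. Since $H$ is a parabolic $k$-cluster with $k\ge 2$, part (a) of Lemma~\ref{Hcritical} gives $\chi_{\mathrm c}(H)=k+1\ge 3$, so $H$ is eligible for the Balogh--Butterfield dichotomy. Part (c) of the same lemma tells us that $H$ is critical. Hence the equivalent condition (1) in Lemma~\ref{mainthmbb} holds: almost every $H$-free graph is an $(s,t)$-template for some witnessing pair $(s,t)$ of $H$.

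Next I would use part (b) of Lemma~\ref{Hcritical}, which states that the \emph{only} witnessing pair for a parabolic $k$-cluster is $(k-1,1)$. Consequently the ``some witnessing pair'' in the conclusion of Lemma~\ref{mainthmbb} is forced to be $(k-1,1)$, and almost every $H$-free graph is a $(k-1,1)$-template, as claimed.

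There is no real obstacle here; the whole content of the theorem is already packaged in Lemma~\ref{Hcritical}. If anything, the only thing to double-check is that the witnessing pair produced by Lemma~\ref{mainthmbb} indeed must coincide with the unique witnessing pair identified in Lemma~\ref{Hcritical}(b) -- but this is immediate from the definition of ``witnessing pair'' (a pair $(s,t)$ with $s+t=\chi_{\mathrm c}(H)-1$ that is not a covering pair), since uniqueness leaves no other option.
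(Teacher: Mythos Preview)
Your proposal is correct and matches the paper's own proof, which simply says the result follows directly from Lemma~\ref{mainthmbb} and Lemma~\ref{Hcritical}. You have merely spelled out in detail how parts (a), (b), and (c) of Lemma~\ref{Hcritical} feed into the hypotheses and conclusion of Lemma~\ref{mainthmbb}, which is exactly the intended argument.
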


\begin{proof}
This follows directly from Lemma~\ref{mainthmbb} and Lemma~\ref{Hcritical}.
\end{proof}

\section{Results on $(d,1)$-templates}\label{sec:d1templates}

In this section we collect the last two main preliminary results  needed for our main theorems. Before that, we prove the following, which is a special case of an estimate by Balogh and Butterfield:

\begin{lemma}[\cite{BaBu}, Corollary 2.3]\label{lem:estimatetemplates}
For any positive integer $d$ and $\varepsilon >0$, there exists an integer $n_0$ such that
$$2^{(1-\frac1{d+1}-\varepsilon)\binom n2}<
\left|
\left\{
\begin{array}{c}
\textrm{$(d,1)$-templates $G$} \\
\textrm{on $n$ vertices}
\end{array}
\right\}\right|
<2^{(1-\frac1{d+1}-\varepsilon)\binom n2}$$
for all $n>n_0$.
\end{lemma}

\begin{proof}
The original statement of Corollary 2.3 of \cite{BaBu} is that for all graphs $H$ we have
$$2^{\big(1-\frac1{\chi_\mathrm c(H)-1}\big)\binom n2}\le|\mathcal Q(n,H)|<2^{\chi_\mathrm c(H)-1}
2^{\big(1-\frac1{\chi_\mathrm c(H)-1}\big)\binom n2}(\chi_\mathrm c(H)-1)^n,$$
where $\mathcal Q(n,H)$ is the family of graphs on $n$ vertices that are $(s,t)$-templates for some witnessing pair $(s,t)$ of $H$. If $H$ is a fixed parabolic $(d+1)$-cluster, this reduces to the desired estimate by parts (a) and (b) of Lemma~\ref{Hcritical}.
\end{proof}

Put in simple terms, the following proposition says that there are few $(d,1)$-templates where many of the $d$\til cliques are small.

\begin{proposition}\label{prop:clusterInTempV2}
For any positive integer $d$, almost every $(d,1)$-template contains all parabolic $k$-clusters, with $k\le d$, as induced subgraphs.
\end{proposition}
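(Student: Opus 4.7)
The plan is a counting argument showing that the $(d,1)$-templates on $n$ vertices that avoid some fixed parabolic $k$-cluster $H$ with $k\le d$ form a vanishing fraction of all $(d,1)$-templates as $n\to\infty$, and then taking a union bound over the finitely many such $H$. (I read ``$(d,1)$-cluster'' in the statement as ``$(d,1)$-template''; there is no definition of the former.) The core asymmetry comes from Lemma\til\ref{Hcritical}(a): a parabolic $k$-cluster has $\chi_{\mathrm c}(H)=k+1$, so the exponent $1-1/(\chi_{\mathrm c}(H)-1)=1-1/k$ controlling the count of $H$-free graphs is strictly smaller than the exponent $1-1/(d+1)$ controlling the count of $(d,1)$-templates, precisely because $k\le d<d+1$.

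Concretely, any $(d,1)$-template avoiding $H$ is in particular an $H$-free graph on $n$ vertices, so by Lemma\til\ref{lem:countPv2} the number of such templates is at most $2^{(1-1/k+\varepsilon)\binom{n}{2}}$. On the other hand Lemma\til\ref{lem:countQ} gives at least $2^{(1-1/(d+1)-\varepsilon)\binom{n}{2}}$ total $(d,1)$-templates on $n$ vertices. For fixed $d$ one can pick $\varepsilon>0$ small enough that $1/(d+1)-1/k+2\varepsilon<0$ for every $k\in\{2,\dots,d\}$ (any $\varepsilon<1/(2d(d+1))$ works, since the minimum of $1/k-1/(d+1)$ over this range is attained at $k=d$). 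Then the ratio of the two counts is of the form $2^{-cn^2}$ with $c>0$ and therefore vanishes as $n\to\infty$.

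To finish, one unions over all parabolic $k$-clusters with $2\le k\le d$. By Proposition\til\ref{prop:clusterCatalan} there are $C_{k-1}$ parabolic $k$-clusters, and Lemma\til\ref{lem:sumCatalan} bounds the total $\sum_{k=2}^{d}C_{k-1}\le 2^{2(d-1)}$ by a constant depending only on $d$. Multiplying the vanishing ratio above by this constant preserves the vanishing, so almost every $(d,1)$-template contains every parabolic $k$-cluster with $k\le d$ as an induced subgraph. The case $d=1$ is vacuous as there are then no parabolic clusters to avoid.

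The argument is an exponential-counting plus union bound and contains no serious obstacle; the only care needed is to fix $\varepsilon$ \emph{after} $d$ so that the estimate from Lemma\til\ref{lem:countPv2} beats the estimate from Lemma\til\ref{lem:countQ} uniformly over all $k\le d$.
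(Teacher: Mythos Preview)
Your proof is correct and follows essentially the same approach as the paper: both fix a suitable $\varepsilon$ depending only on $d$ (the paper takes $\varepsilon=\tfrac{1}{3d(d+1)}$, which lies in your admissible range $\varepsilon<\tfrac{1}{2d(d+1)}$), compare the upper bound from Lemma~\ref{lem:countPv2} against the lower bound from Lemma~\ref{lem:countQ} using $\chi_{\mathrm c}=k+1$ from Lemma~\ref{Hcritical}(a), and then take a union bound over the at most $2^{2(d-1)}$ parabolic clusters via Proposition~\ref{prop:clusterCatalan} and Lemma~\ref{lem:sumCatalan}. Your reading of ``$(d,1)$-cluster'' as ``$(d,1)$-template'' is also correct---this is a typo in the statement.
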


\begin{proof}
Very explicitly, we want to show that
\[
\lim_{n\to\infty}\frac{
\left|
\left\{
\begin{array}{c}
\textrm{$(d,1)$-templates $G$} \\
\textrm{on $n$ vertices}
\end{array}
\left|
\begin{array}{c}
\textrm{all parabolic $k$-clusters with $k\leq d$}  \\
\textrm{are induced subgraphs of $G$}
\end{array}
\right.
\right\} \right|}{
\big|\big\{
(d,1)\textrm{-templates on $n$ vertices}
\big\} \big|}
=1.
\]
Denote by $M_k$ the maximal parabolic $k$-cluster, namely $\overline{K_{2,2,3,4,\dots,k}}$, where each clique has the largest possible size. Observe that each parabolic $k$-cluster is an induced subgraph of\til$M_k$, and each $M_k$ is an induced subgraph of $M_d$ for all $k\le d$. Therefore, the set in the numerator, in the limit formula above, is equal to
$$\left\{
\begin{array}{c}
\textrm{$(d,1)$-templates $G$} \\
\textrm{on $n$ vertices}
\end{array}
\left|
\begin{array}{c}
\textrm{$M_d$ is an induced}  \\
\textrm{subgraph of $G$}
\end{array}
\right.
\right\}.$$
Hence, the statement is equivalent to the fact that
$$\lim_{n\to\infty}\frac{
\left|
\left\{
\begin{array}{c}
\textrm{$(d,1)$-templates $G$} \\
\textrm{on $n$ vertices}
\end{array}
\left|
\begin{array}{c}
\textrm{$M_d$ is \emph{not} an induced}  \\
\textrm{subgraph of $G$}
\end{array}
\right.
\right\} \right|}{
\big|
\big\{
(d,1)\textrm{-templates on $n$ vertices}
\big\} \big|}
=0.$$
And in turn this is equivalent to
$$\lim_{n\to\infty}\frac{
\left|
\left\{
\begin{array}{c}
\textrm{$(d,1)$-templates $G$} \\
\textrm{on $n$ vertices}
\end{array}
\left|
\begin{array}{c}
\textrm{$G$ is a}  \\
\textrm{$(d-1,1)$-template}
\end{array}
\right.
\right\} \right|}{
\big|
\big\{
(d,1)\textrm{-templates on $n$ vertices}
\big\} \big|}
=0,$$
since almost every $M_d$-free graph is a $(d-1,1)$-template, by Theorem~\ref{provedconjecture}. The quantity in the limit can be bounded above by
$$\frac{\big|\big\{
(d-1,1)\textrm{-templates on $n$ vertices}
\big\} \big|}
{\big|\big\{
(d,1)\textrm{-templates on $n$ vertices}
\big\} \big|}\le\frac{2^{(1-\frac1d+\varepsilon)\binom n2}}{2^{(1-\frac1{d+1}-\varepsilon)\binom n2}}=2^{(-\frac1{d(d+1)}+2\varepsilon)\binom n2},$$
where the inequality holds by Lemma~\ref{lem:estimatetemplates}, and this tends to zero if we pick $\varepsilon<\frac1{2d(d+1)}$.
\end{proof}

The following is Theorem\til2 of\til\cite{W14}, and as stated there it follows from Theorem\til1.2 of\til\cite{KM06}.

\begin{lemma}\label{lem:Russ}
If\/ $G$ is a $(d,1)$-template, then $\mathrm{reg}(I_G)\le d+1$.
\end{lemma}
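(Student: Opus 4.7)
The plan is to apply the co-chordal cover bound on regularity, which is the standard tool behind Theorem\til1.2 of~\cite{KM06}: if the edges of a graph $G$ can be written as a union of $t$ co-chordal subgraphs (that is, subgraphs whose complements are chordal), then $\reg(I_G)\le t+1$. So the task reduces to constructing, from a $(d,1)$-template structure on $G$, a cover of $E(G)$ by $d$ co-chordal subgraphs.

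Fix a partition $V(G)=C_1\sqcup\cdots\sqcup C_d\sqcup I$ witnessing the $(d,1)$-template property, with each $C_i$ a clique and $I$ independent. For each $i\in\{1,\dots,d\}$, let $G_i$ be the spanning subgraph of $G$ whose edges are those edges of $G$ having at least one endpoint in $C_i$. The first step is to verify that $E(G)=\bigcup_{i=1}^d E(G_i)$: since $I$ is independent, every edge of $G$ has at least one endpoint in $V(G)\setminus I=C_1\cup\cdots\cup C_d$, and so belongs to some $G_i$.

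The second step is to check that each $G_i$ is co-chordal, by describing $\overline{G_i}$ explicitly on the vertex set $V(G)$. Within $V(G)\setminus C_i$ no edge belongs to $G_i$, so $V(G)\setminus C_i$ spans a clique in $\overline{G_i}$. Within $C_i$ every pair is an edge of $G$ (since $C_i$ is a clique) and is incident to $C_i$, so every pair in $C_i$ lies in $G_i$ and hence $C_i$ is independent in $\overline{G_i}$. Thus $\overline{G_i}$ is a split graph, which in particular is chordal, so $G_i$ is co-chordal as required.

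Combining the two steps, $G=G_1\cup\cdots\cup G_d$ is a cover of $G$ by $d$ co-chordal graphs, so the cited bound gives $\reg(I_G)\le d+1$. There is no real obstacle here: the only point one has to be slightly careful about is the decomposition of the complement, namely recognizing that taking out one clique $C_i$ while leaving the other parts untouched produces a split graph rather than something more complicated; the fact that $I$ is an independent set of $G$ is only used once, to rule out edges of $G$ with no endpoint in any $C_i$.
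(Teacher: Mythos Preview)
Your argument is correct. The paper itself does not give a proof of this lemma but only cites it as Theorem~2 of~\cite{W14}, noting that it follows from Theorem~1.2 of~\cite{KM06}; your write-up is precisely an explicit instantiation of that route, supplying the $d$ co-chordal pieces $G_1,\dots,G_d$ and verifying that each $\overline{G_i}$ is split (hence chordal), which is exactly the argument underlying Woodroofe's result.
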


%%%%%%%%%%%%%%%%%%%%%%%%%%%%%%%

\section{Main results}\label{sec:main}

\begin{theorem}\label{thm:main1}
Let $\beta_{i,j}$ be a parabolic Betti number on the $r$-th row of the Betti table, for some $r\ge3$. Then almost every graph $G$ with $\beta_{i,j}(I_G)=0$ is an $(r-2,1)$-template.
\end{theorem}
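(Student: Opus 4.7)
The plan is a three-step reduction: turn the vanishing hypothesis into an induced-subgraph-free condition via Hochster's formula, deploy the Balogh--Butterfield structure result packaged as Theorem~\ref{provedconjecture}, and finally squeeze the family of graphs with vanishing Betti number between two nested families of asymptotically equal size.

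First I would write $\beta_{i,j}=\beta_{r-2+p,\,2(r-1)+p}$ and single out one parabolic $(r-1)$-cluster $C=\overline{K_{a_1,\ldots,a_{r-1}}}$ with $a_1+\cdots+a_{r-1}=2(r-1)+p$; such a $C$ exists because one can start from $\overline{K_{2,2,\ldots,2}}$ (which realizes $p=0$) and inflate one entry at a time from $a_\ell=2$ toward $a_\ell=\ell$, each unit adding $1$ to $p$ while keeping the cluster parabolic. Applying Lemma~\ref{moregeneralthani2i} with $k=r-1$ then shows that every $G$ with $\beta_{i,j}(I_G)=0$ is $C$-free.

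By Lemma~\ref{Hcritical}, the cluster $C$ is critical and $(r-2,1)$ is its unique witnessing pair, so Theorem~\ref{provedconjecture} tells us that almost every $C$-free graph is an $(r-2,1)$-template. Let $T(n)$, $A(n)$, $B(n)$ denote the sets of $(r-2,1)$-templates, graphs with $\beta_{i,j}(I_G)=0$, and $C$-free graphs on $n$ vertices, respectively; the previous paragraph already gives $A(n)\subseteq B(n)$. For the reverse inclusion at the bottom I would invoke Lemma~\ref{lem:Russ}: every $(r-2,1)$-template has $\reg(I_G)\le r-1$, hence all its Betti numbers on row $\ge r$ vanish, so in particular $T(n)\subseteq A(n)$. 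The chain $T(n)\subseteq A(n)\subseteq B(n)$, combined with the asymptotic $|T(n)|/|B(n)|\to 1$ coming from Theorem~\ref{provedconjecture}, then yields
\[
1\le\frac{|A(n)|}{|T(n)|}\le\frac{|B(n)|}{|T(n)|}\longrightarrow 1,
\]
which under the ``almost every'' convention of the paper is exactly the claim of the theorem.

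The only piece that needs a careful check is the choice of witness cluster at the very start: for every parabolic Betti number on row~$r$, a parabolic $(r-1)$-cluster of the required total size has to exist so that the critical-graph hypothesis of Theorem~\ref{provedconjecture} can be triggered. After that, everything is a two-sided inclusion followed by a squeeze. It is worth observing that Proposition~\ref{prop:clusterInTempV2} plays no role here; it is the tool reserved for Theorem~\ref{thm:main2}, where one must additionally show that the parabolic Betti numbers above row~$r$ are non-vanishing.
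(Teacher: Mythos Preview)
Your proof is correct and follows essentially the same approach as the paper: write the Betti number as $\beta_{r-2+p,\,2(r-1)+p}$, pick a parabolic $(r-1)$-cluster of the right order, sandwich the class of graphs with vanishing Betti number between the $(r-2,1)$-templates (using Lemma~\ref{lem:Russ}) and the $C$-free graphs (using Lemma~\ref{moregeneralthani2i}), and squeeze via Theorem~\ref{provedconjecture}. Your explicit verification that a parabolic $(r-1)$-cluster of total order $2(r-1)+p$ exists for every admissible $p$ is a detail the paper leaves implicit, and your closing remark that Proposition~\ref{prop:clusterInTempV2} is not needed here is also accurate.
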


\begin{proof}
By the discussion following Definition~\ref{parabBnbs}, the parabolic Betti number $\beta_{i,j}$ in the statement can be written in the form $\beta_{r-2+p,\,2(r-1)+p}$, for a non-negative integer $p\le\binom{r-1}2$. %Fix that integer $p$. %Assume that $\beta_{r-2+p,2(r-1)+p}(I_G)=0$.
Let $H$ be any parabolic $(r-1)$-cluster of order $2(r-1)+p$. 
For any $n$, consider the families
\begin{align*}
\mathcal{B}(n)&:=\{\text{graphs $G$ on $n$ vertices with $\beta_{r-2+p,\,2(r-1)+p}(I_G)=0$}\},\\
\mathcal{H}(n)&:=\mathcal P(n,H)=\{\text{graphs on $n$ vertices that are $H$-free}\},\\
\mathcal{T}(n)&:=\{\text{graphs on $n$ vertices that are $(r-2,1)$-templates}\}.
\end{align*}
We will show that almost every $G\in\mathcal{B}(n)$ is in $\mathcal{T}(n)$. %, which by definition means that
%$$\mathcal{T}(n)\subseteq\mathcal{B}(n)\qquad\text{and}\qquad\lim_{n\to\infty}\frac{|\mathcal{B}(n)|}{|\mathcal{T}(n)|}=1.$$
By Lemma~\ref{lem:Russ}, if $G$ is an $(r-2,1)$-template, then $\mathrm{reg}(I_G)\le r-1$. Therefore any Betti number on the $r$-th row of the Betti table, and in particular the number that we are considering in the statement, is zero. Hence we have the inclusion $\mathcal{T}(n)\subseteq\mathcal{B}(n)$, implying that $|\mathcal B(n)|/|\mathcal T(n)|\ge1$. 
%As for the limit, we may start by writing
%$$\frac{|\mathcal{B}(n)|}{|\mathcal{T}(n)|}=\frac{|\mathcal{H}(n)|}{|\mathcal{T}(n)|}\frac{|\mathcal{B}(n)|}{|\mathcal{H}(n)|}.$$
By Lemma~\ref{moregeneralthani2i}, the vanishing of $\beta_{r-2+p,2(r-1)+p}(I_G)$ implies that $G$ is $H$-free, that is, $\mathcal{B}(n)\subseteq\mathcal{H}(n)$. In particular,  $|\mathcal{B}(n)|/|\mathcal{H}(n)|\le1$. %The inclusions $\mathcal{T}(n)\subseteq\mathcal H(n)\subseteq\mathcal{B}(n)$ imply that $|\mathcal{B}(n)|/|\mathcal{T}(n)|\ge1$. 
Therefore we have
$$1\le\frac{|\mathcal{B}(n)|}{|\mathcal{T}(n)|}=\frac{|\mathcal{H}(n)|}{|\mathcal{T}(n)|}\frac{|\mathcal{B}(n)|}{|\mathcal{H}(n)|}\le\frac{|\mathcal{H}(n)|}{|\mathcal{T}(n)|}.$$
By Theorem~\ref{provedconjecture}, almost every $H$-free graph is an $(r-2,1)$-template, which means that 
$$\lim_{n\to\infty}\frac{|\mathcal{H}(n)|}{|\mathcal{T}(n)|}=1.$$
So by the squeeze theorem
$$\lim_{n\to\infty}\frac{|\mathcal{B}(n)|}{|\mathcal{T}(n)|}=1,$$
and this concludes the proof.
\end{proof}

\begin{theorem}\label{thm:main2}
Let $r\ge 3$ and fix a parabolic Betti number $\beta_{i,i+r}$. For almost every graph\til$G$ with $\beta_{i,i+r}(I_G)=0$,
\begin{enumerate}
\item $\reg(I_G)=r-1$, and
\item every parabolic Betti number of\/ $I_G$ above row $r$ is non-zero.
\end{enumerate}
\end{theorem}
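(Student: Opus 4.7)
The plan is to deduce Theorem~\ref{thm:main2} by chaining Theorem~\ref{thm:main1} with Proposition~\ref{prop:clusterInTempV2} and Lemma~\ref{lem:Russ}, using Hochster's formula as the bridge between induced subgraph containment and non-vanishing of Betti numbers.

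First I would apply Theorem~\ref{thm:main1} to reduce the problem to $(r-2,1)$-templates: almost every graph $G$ with $\beta_{i,j}(I_G)=0$ is an $(r-2,1)$-template. Then, Proposition~\ref{prop:clusterInTempV2} applied with $d=r-2$ ensures that almost every $(r-2,1)$-template contains every parabolic $k$-cluster with $k\le r-2$ as an induced subgraph. Combining these two statements via the transitivity of ``almost every'' (the unlabeled lemma at the end of Section~\ref{sec:critical graphs}), almost every graph $G$ with $\beta_{i,j}(I_G)=0$ is simultaneously an $(r-2,1)$-template and contains every parabolic $k$-cluster with $2\le k\le r-2$ as an induced subgraph. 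It therefore suffices to verify parts~(1) and~(2) for an arbitrary such~$G$.

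For part~(2), fix $k$ with $2\le k\le r-2$ and let $C$ be a parabolic $k$-cluster on $2k+p$ vertices. Pick a vertex set $W\subseteq V_G$ on which $G$ induces a copy of~$C$; then $\mathrm{Ind}(G)[W]=\mathrm{Ind}(C)$, and Lemma~\ref{homologyoffatmatching} yields $\tilde H_{k-1}(\mathrm{Ind}(C))\ne 0$. Since all summands in Hochster's formula are dimensions of vector spaces and therefore non-negative, this single contribution forces $\beta_{k-1+p,\,2k+p}(I_G)\ne 0$. Letting $C$ vary over all parabolic $k$-clusters and $k$ over $2,\dots,r-2$, every parabolic Betti number on rows $3,4,\dots,r-1$ is non-zero, which is exactly part~(2).

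For part~(1), the upper bound $\reg(I_G)\le r-1$ is Lemma~\ref{lem:Russ} applied to the $(r-2,1)$-template~$G$, and the matching lower bound $\reg(I_G)\ge r-1$ follows immediately from part~(2), since any parabolic $(r-2)$-cluster inside~$G$ already supplies a non-zero Betti number on row~$r-1$. I do not expect any serious obstacle: the only delicate point is bookkeeping, namely checking that as $k$ and $p$ range over the admissible values the indices $(k-1+p,\,2k+p)$ sweep out precisely the parabolic Betti numbers on rows $3,\dots,r-1$. This is the parametrization already implicit in Definition~\ref{parabBnbs} and counted by Proposition~\ref{prop:clusterCatalan}, so no new technical ingredient beyond the three cited results is required.
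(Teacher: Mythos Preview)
Your proposal is correct and follows essentially the same route as the paper: Theorem~\ref{thm:main1} followed by Proposition~\ref{prop:clusterInTempV2}, then the presence of parabolic clusters forces the Betti numbers above row~$r$ to be non-zero (the paper phrases this as the contrapositive of Lemma~\ref{moregeneralthani2i}, while you unpack it via Hochster's formula and Lemma~\ref{homologyoffatmatching}, which is exactly the proof of that lemma), and Lemma~\ref{lem:Russ} gives the upper regularity bound. Your explicit invocation of the transitivity lemma and your flagging of the index bookkeeping are more careful than the paper's terse proof, but there is no substantive difference in strategy.
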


\begin{proof}
Define the following sets:
\begin{align*}
\mathcal B(n)&:=\big\{\text{graphs $G$ on $n$ vertices}\mid \beta_{i,i+r}(G)=0\big\},\\
\mathcal R(n)&:=\big\{\text{graphs $G$ on $n$ vertices}\mid \reg(I_G)=r-1\big\},\\
\mathcal P(n)&:=\bigg\{\begin{array}{c}\text{graphs $G$}\\\text{on $n$ vertices}\end{array}\bigg| \begin{array}{c}\text{$\beta_{i,i+r}(I_G)=0$ and all parabolic Betti}\\
\text{numbers of $I_G$ above row $r$ are non-zero}
\end{array}\bigg\}.
\end{align*}
We clearly have $\mathcal R(n)\subseteq\mathcal B(n)$ and $\mathcal P(n)\subseteq B(n)$. The two parts of the statement mean respectively that
$$\lim_{n\to\infty}\frac{|\mathcal B(n)|}{|\mathcal R(n)|}=1\qquad\text{and}\qquad
\lim_{n\to\infty}\frac{|\mathcal B(n)|}{|\mathcal P(n)|}=1.$$
We first prove the second part and start by defining more sets:
\begin{align*}
\mathcal K(n)&:=\bigg\{\begin{array}{c}\text{graphs $G$}\\\text{on $n$ vertices}\end{array}\bigg| \begin{array}{c}\text{$G$ is an $(r-2,1)$-template and all parabolic $k$-clusters}\\
\text{with $k\le r-2$ are induced subgraphs of $G$}
\end{array}\bigg\},\\
\mathcal T(n)&:=\big\{\text{graphs $G$ on $n$ vertices}\mid \text{$G$ is an $(r-2,1)$-template}\big\}.
\end{align*}
Written in formulas, Theorem\til\ref{thm:main1} and Proposition\til\ref{prop:clusterInTempV2} state respectively that $\lim_{n\to\infty}\frac{|\mathcal B(n)|}{|\mathcal T(n)|}=1$ and $\lim_{n\to\infty}\frac{|\mathcal T(n)|}{|\mathcal K(n)|}=1$. By Lemma\til\ref{moregeneralthani2i}, we have $\mathcal K(n)\subseteq\mathcal P(n)$. But then we get
$$\lim_{n\to\infty}\frac{|\mathcal B(n)|}{|\mathcal K(n)|}=\lim_{n\to\infty}\frac{|\mathcal T(n)|}{|\mathcal K(n)|}\frac{|\mathcal B(n)|}{|\mathcal T(n)|}=1.$$ The second point in the statement then follows, as $1\le\frac{|\mathcal B(n)|}{|\mathcal P(n)|}\le \frac{|\mathcal B(n)|}{|\mathcal K(n)|}$.
Next, define
\begin{align*}
\mathcal R_\le(n)&:=\big\{\text{graphs $G$ on $n$ vertices}\mid \reg(I_G)\le r-1\big\},\\
\mathcal R_\ge(n)&:=\big\{\text{graphs $G$ on $n$ vertices}\mid \reg(I_G)\ge r-1\text{ and }\beta_{i,i+r}(I_G)=0\big\}.
\end{align*}
Note that $\mathcal R_\le(n)\cap\mathcal R_\ge(n)=\mathcal R(n)$ and $\mathcal R_\le(n)\cup\mathcal R_\ge(n)=\mathcal B(n)$ by construction, so that $|\mathcal R(n)|=|\mathcal R_\le(n)|+|\mathcal R_\ge(n)|-|\mathcal B(n)|$. Moreover we have the inclusions $\mathcal P(n)\subseteq\mathcal R_\ge(n)\subseteq\mathcal B(n)$ and $\mathcal R_\le(n)\subseteq\mathcal B(n)$.
And on the other hand, by Lemma\til\ref{lem:Russ}, we know that $\reg(I_G)\le r-1$ for any $(r-2,1)$-template $G$, namely $\mathcal T(n)\subseteq\mathcal R_\le(n)$.
In order to prove the first point in the statement, observe that
$$\frac{|\mathcal R(n)|}{|\mathcal B(n)|}=\frac{|\mathcal R_\le(n)|}{|\mathcal B(n)|}+\frac{|\mathcal R_\ge(n)|}{|\mathcal B(n)|}-\frac{|\mathcal B(n)|}{|\mathcal B(n)|}.$$
The first summand has limit equal to $1$ because $1\le\frac{|\mathcal B(n)|}{|\mathcal R_\le(n)|}\le\frac{|\mathcal B(n)|}{|\mathcal T(n)|}$ and by Theorem\til\ref{thm:main1}. The second summand has limit equal to $1$ because $1\le\frac{|\mathcal B(n)|}{|\mathcal R_\ge(n)|}\le\frac{|\mathcal B(n)|}{|\mathcal P(n)|}$ and by the second point in the statement, proven above.
\end{proof}

\begin{remark}
At first sight Theorem\til\ref{thm:main2} might look a bit counterintuitive, and it even seems to lead to a contradiction, as follows. One may think that for every graph $G$ with $\reg(I_G)=r-1$, all the parabolic Betti numbers below row $r$ vanish, in particular on some row $s>r$. But then these graphs $G$ satisfy the hypotheses of the theorem, and this would imply that $\reg(I_G)=s-1>r-1$, a contradition. This does not work because there are no graphs ``satisfying the hypotheses of the theorem''. We would like to stress the order of the quantifiers: the theorem is not about specific graphs; first we fix a parabolic Betti number, and the rest of the statement is an asymptotic result about that Betti number. (One can indeed make examples of graphs $G$ that have some vanishing parabolic Betti number on row $r$, but whose regularity is much smaller or much higher than $r$---the following section is dedicated to that.) 
More explicitly, consider the parabolic Betti numbers $\beta_1:=\beta_{i,i+r}$ and $\beta_2:=\beta_{j,j+s}$, respectively on rows $r$ and $s$, for some $r<s$. The sets involved in the first part of the statements of Theorem\til\ref{thm:main2} are
\begin{align*}
\mathcal B_1(n)&:=\big\{\text{graphs $G$ on $n$ vertices}\mid \beta_{i,i+r}(G)=0\big\},\\
\mathcal R_1(n)&:=\big\{\text{graphs $G$ on $n$ vertices}\mid \reg(I_G)=r-1\big\},\\
\mathcal B_2(n)&:=\big\{\text{graphs $G$ on $n$ vertices}\mid \beta_{j,j+s}(G)=0\big\},\\
\mathcal R_2(n)&:=\big\{\text{graphs $G$ on $n$ vertices}\mid \reg(I_G)=s-1\big\}.
\end{align*}
where the notation mirrors the one used in the proof. The statement itself, applied separately to the two distinct Betti numbers, explicitly means that
$$\lim_{n\to\infty}\frac{|\mathcal B_1(n)|}{|\mathcal R_1(n)|}=1\qquad\text{and}\qquad\lim_{n\to\infty}\frac{|\mathcal B_2(n)|}{|\mathcal R_2(n)|}=1.$$
The point is that they are two distinct statements: the sets $\mathcal R_1(n)$ and $\mathcal R_2(n)$ are clearly disjoint, and in general also the sets $\mathcal B_1(n)$ and $\mathcal B_2(n)$ are not related, in the sense that none is a subset of the other. In \emph{special} cases there are relations: for instance if the Betti numbers are both chosen to be on the main diagonal, then $\mathcal B_1(n)$ would be a subset of $\mathcal B_2(n)$, but this is indeed a special case. 
One could in principle also compare the size of $\mathcal B_1(n)$ and $\mathcal B_2(n)$, by using classical estimates by Pr\"omel and Steger (written in our notation as Theorem\til1.10 of\til\cite{BaBu}), but perhaps that would only add confusion here. Regardless of their size, the two sets are involved in unrelated statements.
\end{remark}

%%%%%%%%%%%%%%%%%%%%%%%%%%%

\section{How often is the regularity not $r-1$?}\label{sec:notrminus1}

According to our main result, Theorem~\ref{thm:main2}, the regularity of $I_G$ is $r-1$ for almost all graphs $G$ with a vanishing parabolic Betti number $\beta_{i,j}(I_G)=0$ on row $r$. It is natural to ask if the ``almost all'' in our main result is simply an artifact of our proof technique, and in fact the statement holds for all graphs. To see that this is not the case, one may for instance consider  a matching consisting of disjoint edges. In this case the non-zero Betti numbers lie on the main diagonal, so that there are many vanishing parabolic Betti numbers on many rows $r$, but the regularity can be very far from\til$r-1$. This gives a rather small set of counterexamples (containing one graph on $2n$ vertices for every $n$); this section is devoted to larger classes of counterexamples.

In this section it is demonstrated that there are many graphs whose edge ideal has regularity different from $r-1$ and with some vanishing parabolic Betti number on row\til$r$. We show that for large $n$, there are at least $2.99^n$ such graphs on $n$ vertices.

For our constructions we will need to estimae the number of unlabeled trees on $n$ vertices. This number was determined asymptotically by Otter~\cite{O48}, and this is today a textbook exercise in singularity analysis \cite{FS09}.
The following is a slightly suboptimal simplified lower bound.

\begin{lemma}\label{lem:countTrees}
There are at least $2.995^n$ trees on $n$ vertices, for large $n$.
\end{lemma}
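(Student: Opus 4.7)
The plan is to deduce this directly from Otter's classical asymptotic formula for the number of unlabeled trees, which the paragraph preceding the lemma already flags as the key ingredient. Writing $T_n$ for the number of unlabeled trees on $n$ vertices, Otter's theorem (see \cite{O48}, or \cite{FS09} for a modern singularity-analysis derivation) gives
\[
T_n \sim c\cdot \alpha^{n}\cdot n^{-5/2}
\]
as $n\to\infty$, where $\alpha$ is Otter's tree-enumeration constant and $c>0$ is an explicit constant. The whole argument then reduces to comparing exponentials.

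More concretely, the plan has three steps. First, quote the asymptotic $T_n \sim c\,\alpha^{n} n^{-5/2}$ from the literature; this is the only nontrivial ingredient, and since the lemma is advertised as ``slightly suboptimal'', we do not need to track any constants beyond the fact that $\alpha$ exceeds the base $2.995$ appearing in the statement. Second, form the ratio
\[
\frac{T_n}{2.995^{n}} \;\sim\; c\,\Bigl(\frac{\alpha}{2.995}\Bigr)^{n} n^{-5/2},
\]
and observe that because $\alpha/2.995>1$, the exponential factor dominates the polynomial decay $n^{-5/2}$, so the right-hand side tends to $+\infty$. Third, deduce that there is an $n_0$ beyond which $T_n \ge 2.995^{n}$, which is the claim.

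There is essentially no obstacle once Otter's theorem is taken as a black box: the numerical comparison between $\alpha$ and the (intentionally loose) base $2.995$ is the only check needed, and it is done once and for all by appealing to the explicit value recorded in \cite{FS09}. If one wished to avoid invoking the full asymptotic, an alternative plan would be to work directly from the Pólya functional equation for unlabeled rooted trees $R(x)=x\exp\bigl(\sum_{k\ge1}R(x^{k})/k\bigr)$ to extract a lower bound $R_n\ge \beta^{n}$ for some explicit $\beta$ exceeding $2.995$, and then use $T_n\ge R_n/n$ (each free tree rooted in at most $n$ ways) to conclude; but this route is strictly more work, and for the purposes of the lemma the one-line deduction from Otter is preferable.
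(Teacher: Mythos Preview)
Your plan matches the paper's intent (the paper gives no proof and simply gestures at Otter's asymptotic), but the decisive numerical check fails. Otter's growth constant for unlabeled trees is
\[
\alpha \;=\; 2.955765\ldots,
\]
which is \emph{smaller} than $2.995$. Consequently $\alpha/2.995<1$, and the ratio
\[
\frac{T_n}{2.995^{\,n}} \;\sim\; c\Bigl(\frac{\alpha}{2.995}\Bigr)^{n} n^{-5/2}
\]
tends to $0$, not to $+\infty$. Your argument collapses precisely at the sentence ``because $\alpha/2.995>1$'': that inequality is false. The alternative route you sketch via the P\'olya functional equation would hit the same obstruction, since the radius of convergence of the rooted-tree generating function is $1/\alpha > 1/2.995$.

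In fact this shows that the lemma as stated is not true: for all large $n$ one has $T_n < 2.96^{\,n}$, let alone $2.995^{\,n}$. The paper supplies no proof, and its only use of the lemma is to deduce a $2.99^{\,n}$ lower bound in the subsequent theorem; both the lemma and that theorem are salvaged by replacing $2.995$ with any base strictly below $\alpha$ (say $2.95$) and correspondingly lowering the $2.99$ to, e.g., $2.94$. So the right fix is not to patch your argument but to correct the constants.
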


\begin{lemma}\label{lem:indSpecEx}
For integers $a \geq 3$ and $b,c\geq 0$, let
\[ G = \overline{C_a \sqcup T_b} \sqcup {M_c},  \]
where $C_a$ is the cycle on $a$ vertices, $T_b$ is a tree on $b$ vertices, and $M_c$ is the perfect matching on $c$ edges.
\begin{enumerate}
\item[(1)] If $a+2c \leq m \leq a+b+2c$, then there is an induced subgraph $H$ of $G$ on $m$ vertices with $\dim \tilde{H}_{c+1}(\mathrm{Ind}(H))=1$;
\item[(2)] If $m< a+2c$, then, for any induced subgraph $H$ of $G$ on $m$ vertices, we have $\dim \tilde{H}_{c+1}(\mathrm{Ind}(H))=0$;
\item[(3)] For any induced subgraph $H$ of $G$, we have  $\dim \tilde{H}_{c+i}(\mathrm{Ind}(H))=0$ for $i>1$.
\end{enumerate}
\end{lemma}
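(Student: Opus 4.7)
The plan is to compute $\tilde H_*(\mathrm{Ind}(H))$ for every induced subgraph $H\subseteq G$ using the standard isomorphisms
\[
\mathrm{Ind}(G_1 \sqcup G_2) \cong \mathrm{Ind}(G_1) * \mathrm{Ind}(G_2)
\quad\text{and}\quad
\mathrm{Ind}(\overline{G}) = \mathrm{Cl}(G),
\]
where $\mathrm{Cl}(G)$ denotes the clique complex of $G$, together with the K\"unneth formula for joins over a field,
\[
\tilde H_i(A * B) \cong \bigoplus_{p+q = i-1}\tilde H_p(A) \otimes \tilde H_q(B).
\]
Every induced subgraph of $G$ takes the form $H = \overline{C_a[V_1]\sqcup T_b[V_2]}\sqcup M_c[V_3]$ for some $V_1\subseteq V(C_a)$, $V_2\subseteq V(T_b)$, $V_3\subseteq V(M_c)$, and correspondingly $\mathrm{Ind}(H) \cong Y * Z$, where $Y := \mathrm{Cl}(C_a[V_1]) \sqcup \mathrm{Cl}(T_b[V_2])$ and $Z := \mathrm{Ind}(M_c[V_3])$.

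Next I would identify the homotopy type of each factor. The induced subgraph $C_a[V_1]$ is either the full cycle (with $\mathrm{Cl}(C_a)\simeq S^1$ for $a\ge 4$, or a contractible filled $2$-simplex for $a=3$) or a disjoint union of paths; in every case $\mathrm{Cl}(C_a[V_1])$ has no reduced homology above degree one, and a nonzero $\tilde H_1$ arises only when $a\ge 4$ and $V_1 = V(C_a)$. Likewise $T_b[V_2]$ is a forest whose clique complex has only $\tilde H_0$ possibly nonzero. For the matching piece, write $|V_3|=2c'+c''$, with $c'$ the number of matching pairs of $M_c$ fully contained in $V_3$ and $c''$ the number of broken endpoints; then $Z$ is the join of an ordinary simplex with $S^{c'-1}$, so $Z$ is contractible when $c''\ge 1$ and $Z\simeq S^{c'-1}$ otherwise. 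Substituting into the join formula gives $\tilde H_i(\mathrm{Ind}(H)) = 0$ whenever $c''\ge 1$, and otherwise the clean identification $\tilde H_i(\mathrm{Ind}(H)) \cong \tilde H_{i-c'}(Y)$.

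With this dictionary in place the three parts reduce to short case analyses. For (1), I would take $V_3 = V(M_c)$, $V_1 = V(C_a)$, and any $V_2\subseteq V(T_b)$ of cardinality $m-a-2c$; this forces $c'=c$, and $\tilde H_{c+1}(\mathrm{Ind}(H)) \cong \tilde H_1(Y) = \mathbb{K}$ comes from the $S^1$-summand of $C_a$ (for $a\ge 4$). For (2), if $m<a+2c$ then either $c''\ge 1$ (and we are done) or $c''=0$ and $|V_1|+|V_2|<a$, forcing $V_1\subsetneq V(C_a)$; in that case $Y$ is a forest with $\tilde H_{\ge 1}(Y)=0$, so the only candidate summand $\tilde H_{c+1-c'}(Y)$ (of degree $\ge 1$ since $c'\le c$) vanishes. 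For (3), the dimensional observation $\tilde H_{\ge 2}(Y)=0$ together with $c'\le c$ immediately gives $\tilde H_{c+i-c'}(Y)=0$ for all $i\ge 2$.

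The main obstacle is the $a=3$ case of part (1): $\mathrm{Cl}(K_3)$ is the contractible $2$-simplex, so $\tilde H_1(Y)=0$ uniformly, and no other K\"unneth summand can supply a class in $\tilde H_{c+1}$ (the potential summand $\tilde H_0(Y)\otimes \tilde H_c(Z)$ always vanishes because $\tilde H_c(S^{c'-1})=0$ when $c'\le c$). I would attempt to rescue it either by an ad~hoc construction specific to $a=3$, exploiting the freedom in $T_b[V_2]$ to engineer additional structure, or by tightening the hypothesis to $a\ge 4$. Apart from this subtlety, everything else is routine bookkeeping once the join decomposition has been set up.
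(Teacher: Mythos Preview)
Your approach is the paper's approach. The paper also decomposes $\mathrm{Ind}(H)$ as the independence complex of the $\overline{C_a\sqcup T_b}$--part joined with that of the matching part; it phrases the matching contribution as ``adding an edge suspends $\mathrm{Ind}$ and shifts homology by one'' rather than via K\"unneth for joins, but this is the same computation.

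There is one small slip in your argument for~(2). From $c''=0$ you jump to $|V_1|+|V_2|<a$, but this only follows when in addition $c'=c$; in general $c''=0$ merely gives $c'\le c$, and $|V_1|+|V_2|=m-2c'$ may well exceed $a$ when $c'<c$. The repair is immediate (and is how the paper argues): when $c'<c$ the relevant summand is $\tilde H_{c+1-c'}(Y)$ with $c+1-c'\ge 2$, and $Y$ never has homology above degree one regardless of whether $V_1=V(C_a)$, so you are done. Only the case $c'=c$ needs the ``$Y$ is a forest'' observation.

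Your worry about $a=3$ is entirely justified, and it is not a defect of your argument but of the lemma as stated. The paper asserts that $\mathrm{Ind}(\overline F)$ is ``the graph $F$ itself seen as a simplicial complex'' and that $\dim\tilde H_1(\mathrm{Ind}(\overline F))=1$ when $F$ is a cycle; both claims tacitly assume $F$ is triangle-free. For $a=3$ one has $\mathrm{Ind}(\overline{C_3})=\mathrm{Cl}(K_3)=\Delta^2$, which is contractible, and your K\"unneth analysis shows that no choice of $V_2\subseteq V(T_b)$ can manufacture a class in $\tilde H_{c+1}$. So part~(1) genuinely fails for $a=3$. Your proposed fix of tightening to $a\ge 4$ is the correct one; the ad~hoc rescue via $T_b[V_2]$ cannot work.
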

\begin{proof}
%First we make some elementary observations on the homotopy type of the independence complexes of the graphs related to this situation. 

%For any graph $F$, the independence complex of $\overline{F}$ is the graph $F$ itself seen as a simplicial complex. The complex is empty if $F$ is empty, it is zero-dimensional if $F$ has vertices but no edges, and it is one-dimensional if $F$ has edges. The number $\dim\tilde{H}_{0}(\mathrm{Ind}(\overline{F}))$ is the number of connected components of $F,$ and $\dim \tilde{H}_{1}(\mathrm{Ind}(\overline{F}))=0$ if and only if $F$ is a forest. In particular, $\dim \tilde{H}_{1}(\mathrm{Ind}(\overline{F}))=1$ if $F$ is a cycle.

For any graph $F$, the complex $\mathrm{Ind}(F \sqcup \textrm{edge})$ is the suspension of $\mathrm{Ind}(F)$ and hence has the same homology  shifted by one position. By iteration, the independence complex of the disjoint union of $F$ and a perfect matching on $e$ edges has the homology of the independence complex of $F,$ but shifted by $e$ positions.

We prove (2) and (3): Let $H$ be an induced subgraph of $G = \overline{C_a \sqcup T_b} \sqcup {M_c}.$ If $H \cap {M_c}$ has an isolated vertex, then so does $H$, and hence 
$\mathrm{Ind}(H)$ is contractible and in particular has no homology. Otherwise, $H \cap {M_c}$ is a perfect matching on $c'$ edges, and $\mathrm{Ind}(H)$ has the homology of $\mathrm{Ind}(H \cap  \overline{C_a \sqcup T_b})$ shifted upwards by $c'$ degrees. The homology of $\mathrm{Ind}(H \cap  \overline{C_a \sqcup T_b})$ is zero in dimension $i>1$, which establishes statement (3) as $c'\leq c$. To get $\dim \tilde{H}_{c+1}(\mathrm{Ind}(H)) > 0$ we need that $\overline{C_a} \subseteq H$ and $c=c'$. In particular, there will be at least $a+2c$ vertices in $H$, establishing statement (2).

Finally, we prove (1): For  $a+2c \leq m \leq a+b+2c$, construct $H$ by only removing vertices from $G$ that are in $\overline{T_b}$. By the preceding homology calculation,  $\dim \tilde{H}_{c+1}(\mathrm{Ind}(H)) = 1$.
\end{proof}

In the following result, we show that there exist some graphs $G$ with vanishing Betti numbers in row $r$ (condition (2)) and such that $\reg(I_G)=r$ (conditions (1) and (3)).

\begin{theorem}
Let $r\geq 3$ and $i \geq 2r- 4$ be integers. For large $n$, there are at least $2.99^n$ graphs $G$ on $n$ vertices such that
\begin{enumerate}
\item[(1)] on row $r$, one has $\beta_{j,r+j}(I_G)>0$ for $i < j \leq n-r$;
\item[(2)]  on row $r$, one has $\beta_{j,r+j}(I_G)=0$ for $j \leq i$;
\item[(3)] below row $r$, all Betti numbers are zero.
\end{enumerate}
\end{theorem}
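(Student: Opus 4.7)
The plan is to produce the required graphs explicitly as the ones studied in Lemma\til\ref{lem:indSpecEx}, using the tree $T_b$ as the source of variation. I set
\[
c := r-3,\qquad a := i - r + 7,\qquad b := n - i - r - 1,
\]
and consider $G = \overline{C_a \sqcup T_b} \sqcup M_c$. The assumptions $r\ge 3$ and $i\ge 2r-4$ give $c\ge 0$ and $a\ge r+3\ge 6$, and $b\ge 1$ for all sufficiently large\til$n$. The point of this particular choice is that the arithmetic in Lemma\til\ref{lem:indSpecEx} then lines up exactly with the ranges appearing in the statement: one computes $a+2c = r+i+1$ and $a+b+2c = n$, so $|V_G|=n$ and the range of sizes $\{r+(i+1),\dots,n\}$ arising from condition\til(1) of the theorem coincides with the interval $[a+2c,a+b+2c]$ of Lemma\til\ref{lem:indSpecEx}\til(1).

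With the parameters fixed, I verify the three Betti conditions by combining Hochster's formula with Lemma\til\ref{lem:indSpecEx}. For condition\til(3), every Betti number strictly below row\til$r$ is a sum of dimensions $\dim_\mathbb{K}\tilde H_k(\mathrm{Ind}(G)[W])$ with $k\ge r-1=c+2$, and Lemma\til\ref{lem:indSpecEx}\til(3) forces each summand to vanish. For condition\til(2), if $j\le i$ then $|W|=r+j\le r+i<a+2c$, so Lemma\til\ref{lem:indSpecEx}\til(2) makes every summand in $\beta_{j,r+j}(I_G)$ equal to zero. For condition\til(1), the range $i<j\le n-r$ rewrites as $a+2c\le r+j\le a+b+2c$, and Lemma\til\ref{lem:indSpecEx}\til(1) exhibits at least one $W$ of size $r+j$ with $\dim_\mathbb{K}\tilde H_{r-2}(\mathrm{Ind}(G)[W])=1$, which forces the corresponding Betti number to be positive.

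For the counting, I would argue that non-isomorphic trees $T_b$ yield non-isomorphic graphs\til$G$. The connected components of $G$ are the large one $\overline{C_a\sqcup T_b}$ (which has $a+b\ge 4$ vertices for large\til$n$) together with $c$ isolated edges; taking the complement of the large component recovers $C_a\sqcup T_b$, and since $T_b$ is acyclic the cycle $C_a$ is identifiable as the unique cyclic component, so the isomorphism type of\til$T_b$ is determined by that of\til$G$. Thus the number of constructed graphs is at least the number of trees on $b=n-i-r-1$ vertices, which by Lemma\til\ref{lem:countTrees} is at least $2.995^b$ for large\til$n$. Because $r$ and $i$ are fixed constants and $(2.995/2.99)^n\to\infty$, this lower bound exceeds $2.99^n$ for all sufficiently large\til$n$. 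The only genuinely delicate step is calibrating $a$ so that the transition from a vanishing Betti number to a non-vanishing one happens exactly between $j=i$ and $j=i+1$; the rest is a bookkeeping exercise using the three parts of Lemma\til\ref{lem:indSpecEx} and absorbing the constant-size deficit $n-b=i+r+1$ into the slack between $2.995$ and $2.99$.
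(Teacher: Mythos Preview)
Your argument is correct and follows the same route as the paper: construct $G=\overline{C_a\sqcup T_b}\sqcup M_c$ with $c=r-3$, apply the three parts of Lemma~\ref{lem:indSpecEx} via Hochster's formula, and use Lemma~\ref{lem:countTrees} to count the trees. Your choice of parameters $a=i-r+7$ and $b=n-i-r-1$ differs from the paper's displayed values $a=i-2r+7$ and $b=n-i+r-4$, but yours are the ones that actually make $|V_G|=n$ and make the threshold $a+2c$ land exactly at $r+i+1$; the paper simply waves this off as ``elementary index calculations''. Your added justification that non-isomorphic trees $T_b$ yield non-isomorphic graphs $G$ (by reading off the large connected component and complementing) is a point the paper leaves implicit, and it is correct.
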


\begin{proof}
Consider graphs $G = \overline{C_{i-2r+7} \sqcup T_{n-i+r-4}} \sqcup {M_{r-3}}$
for $n\geq i+r-4$. By Hochster's formula
\[ \beta_{j,r+j}(I_G)=\sum_{W\in\binom{V_G}{r+j}}\dim \tilde H_{r-2}(\mathrm{Ind}(G)[W]) \]
and by Lemma~\ref{lem:indSpecEx} with some elementary index calculations, the statements (1), (2) and (3) are true.
By Lemma~\ref{lem:countTrees}, there are at least $2.995^{n-i+r-4}$ such graphs for large $n$ by modifying $T_{n-i+r-4}$, and thus there are $2.99^n$ of them for large $n$.
\end{proof}

\subsection{The case of regularity $2$}

As an anonymous referee very kindly suggested, one could consider counterexamples coming from the very famous theorem by Fr\"oberg stated below. Indeed there are many graphs whose edge ideal has some vanishing parabolic Betti numbers on rows with high index, but the regularity is equal to\til$2$. In this section we collect well known results that combine into asymtpotics for such graphs.

The two following results are both very classical in the respective theories. We recall that ``split graph'' is the classical name for a $(1,1)$-template, and chordal graphs are graphs that do not contain induced cycles of length strictly greater than $3$.

\begin{theorem}[Bender et al.\til\cite{BRW}]\label{thm:almchosplit}
Almost all chordal graphs are split.
\end{theorem}

\begin{theorem}[Fr\"oberg\til\cite{F90}]\label{thm:frobg}
One has $\reg(I_G)=2$ iff the complement of $G$ is chordal.
\end{theorem}

For the readers who are more inclined towards combinatorics than commutative algebra, we recommend the proof in\til\cite{DE09} of Theorem\til\ref{thm:frobg}.
It is well known that for higher regularity one cannot hope for a combinatorial description similar to that of Theorem\til\ref{thm:frobg}, due to the fact that the characteristic of the ground field might affect the regularity. There have been attempts to generalize this result at least partially to hypergraphs, but we do not wish to go in that direction.

Denote by $s_n$ the number of split graphs on $n$ vertices.
Asymptotics for $s_n$ (and equivalently for the number of chordal graphs, by Theorem\til\ref{thm:almchosplit}) were already given in Corollary\til1  of\til\cite{BRW}:  in particular $\log_2 s_n$ tends to $n^2/4$. The following is a much more recent estimate:
\begin{theorem}[Troyka\til\cite{Troyka}]
The number $s_n$ of split graphs on $n$ vertices is asymptotically equal to
$$\widetilde s_n:=\frac1{n!}\bigg(\sum_{k=0}^n\binom nk 2^{k(n-k)}-n\sum_{k=0}^{n-1}\binom{n-1}k2^{k(n-1-k)}\bigg).$$
\end{theorem}

By combining the results above, one gets the following:

\begin{corollary}
The number of graphs $G$ on $n$ vertices with $\reg(I_G)=2$ is asymptotically equal to $\widetilde s_n$.
\end{corollary}

We therefore have $\widetilde s_n$ graphs on $n$ vertices with vanishing parabolic Betti numbers on row $r$, for arbitrarily high $r$, but with regularity equal to $2$.

%%%%%%%%%%%%%%%%%%%%%%%%%%%

\section{Non-parabolic Betti numbers}\label{sec:non-parabolic}

Our main theorems concern \emph{parabolic} Betti numbers. One may wonder if the situation can be reasonably generalized to any Betti number. Here we show what goes wrong. 

In Section\til\ref{sec:no-bound} we consider the Betti numbers on row $2$ of the Betti table: not only our methods with critical graphs cannot be applied, we actually show that the vanishing of a non-parabolic Betti number in row\til$2$ does not imply any constant bound on the regularity.

In Section\til\ref{sec:BNrow3} we show that the situation gets quite messy in the case of the first two non-parabolic Betti numbers on row\til$3$.

\subsection{Non-parabolic Betti numbers on row $2$: absence of constant bounds on the regularity}\label{sec:no-bound}

\begin{definition}
Let $G$ be a graph. The \emph{induced matching number} of $G$, denoted $\iota(G)$, is the number of edges in a largest induced matching of $G$.
\end{definition}

Note that $\iota(G)$ is not the \emph{matching number} of $G$, which is the number of edges in a largest arbitrary matching, not necessarily induced. For instance, in a square the matching number is\til$2$, whereas $\iota(G)=1$. The matching number and the induced matching number are both well-known invariants, including in the commutative algebra literature. A folklore lower bound for the regularity, easily proven via Hochster's formula, is $\iota(G)\le\reg(S/I_G)$. We mentioned this for the sake of completeness, although we do not use it in our proofs.

\begin{lemma}\label{lem:enumMatch}
If $G$ is a graph on $e$ edges and of maximal degree $d>0,$ then $\iota(G) \geq \frac{e}{2d^2}$.
\end{lemma}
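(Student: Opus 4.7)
The plan is a straightforward greedy argument. Repeatedly pick an arbitrary remaining edge $\{u,v\}$ of $G$, add it to an induced matching $M$ under construction, and then delete from $G$ every edge that is in conflict with $\{u,v\}$, that is, every edge having at least one endpoint in $N[u]\cup N[v]$ (the closed neighborhood). By construction, any two edges kept in $M$ have no common vertex and no extra edge between their endpoints, so $M$ is genuinely an induced matching.

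The heart of the proof is to bound, uniformly in each step, the number of edges thrown away. First I would check the size of the forbidden vertex set: since the maximum degree is $d$, we have $|N[u]|\le d+1$ and $|N[v]|\le d+1$, and because $\{u,v\}$ is an edge both $u$ and $v$ lie in $N[u]\cap N[v]$, yielding
\[
|N[u]\cup N[v]|\le 2(d+1)-2 = 2d.
\]
Every edge removed at this step is incident to at least one vertex of $N[u]\cup N[v]$, so their number is at most $\sum_{w\in N[u]\cup N[v]}\deg(w)\le 2d\cdot d = 2d^2$.

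Iterating the greedy choice until no edges remain, we perform at least $\lceil e/(2d^2)\rceil$ steps, each contributing one edge to the induced matching. Thus $\iota(G)\ge e/(2d^2)$.

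The main (and only mild) obstacle is the overlap bookkeeping in $|N[u]\cup N[v]|$: one must remember to use the fact that $u,v$ are already in each other's closed neighborhood to get $2d$ rather than $2d+2$. Otherwise the argument is entirely routine.
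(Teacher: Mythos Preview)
Your proof is correct and follows essentially the same greedy approach as the paper: repeatedly pick an edge, add it to the matching, and discard all edges within distance one (in the line-graph sense). The paper phrases the deletion as ``the edge, all edges incident to it, and all edges incident to those'' and bounds the count by $1+2(d-1)+2(d-1)^2<2d^2$, while you rephrase the same set as ``edges with an endpoint in $N[u]\cup N[v]$'' and bound it by $|N[u]\cup N[v]|\cdot d\le 2d\cdot d$; these are the same edges and the same argument.
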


\begin{proof}
We describe an algorithm that produces an induced matching. Start by coloring all edges black and set $I=\emptyset$. Pick a black edge and add it to $I$. Color that edge, all edges that are incident to that one, and all edges that are incident to those ones, in red. Pick a new black edge, add it to $I$ and repeat the procedure until all edges are red.

The edges in $I$ form an induced matching. Denoting by  $|I|$ the cardinality of $I$, the algorithm ends after $|I|$ steps, and at most $1+2(d-1)+2(d-1)(d-1)<2d^2$ edges are colored red in each step. Therefore, one has $e\le|I|2d^2$.

Observe that every induced matching, and in particular those of largest size, can be found by this algorithm. The point of the algorithm is to bound the number of red edges at each step.
\end{proof}

\begin{lemma}\label{lem:eMatch}
Fix $k\geq 3$. For any natural number $n$, denote
$$\mathcal{G}_n= \bigg\{
\begin{array}{c} \textrm{graphs $G$ on } \\ \textrm{$n$ vertices} \end{array}
 \bigg|  \begin{array}{c} \text{for all induced subgraphs $H$ of $G$ of order $k$,} \\ \textrm{the complement of $H$ is connected} \end{array}  \bigg\}.$$
Then 
$$\frac1{| \mathcal{G}_n |} \sum_{G \in  \mathcal{G}_n } \iota(G) \geq \frac{n-1}{4 (k-2)^2}.$$
\end{lemma}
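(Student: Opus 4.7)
The plan is a reduction via a degree bound and Lemma\til\ref{lem:enumMatch}, leaving the real work as an edge-average estimate. First I would show that every $G\in\mathcal{G}_n$ has maximum degree at most $k-2$: if some $v$ had $k-1$ neighbors $w_1,\dots,w_{k-1}$, then $H=G[\{v,w_1,\dots,w_{k-1}\}]$ would be a $k$-vertex induced subgraph in which $v$ is adjacent to everything else, so $\overline H$ would have $v$ isolated and be disconnected, contradicting $G\in\mathcal{G}_n$.

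Next I would invoke Lemma\til\ref{lem:enumMatch} with $d\le k-2$ to conclude that $\iota(G)\ge e(G)/(2(k-2)^2)$ for every $G\in\mathcal{G}_n$ (the inequality is trivial when $G$ is edgeless). Averaging over $\mathcal{G}_n$, the desired statement reduces to the estimate
$$\frac{1}{|\mathcal{G}_n|}\sum_{G\in\mathcal{G}_n}e(G)\ \ge\ \frac{n-1}{2}.$$
By the permutation symmetry of the class $\mathcal{G}_n$, this is in turn equivalent to the assertion that a fixed pair $\{u,v\}$ appears as an edge of a uniformly random $G\in\mathcal{G}_n$ with probability at least $1/n$.

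The hardest step will clearly be this edge-probability estimate. My preferred attack is a local switching argument: for each $G\in\mathcal{G}_n$ in which $\{u,v\}$ is not an edge, I would produce a bounded family of ``switched'' graphs in $\mathcal{G}_n$ in which $\{u,v\}$ \emph{is} an edge---for instance by rerouting an edge incident to one of $u,v$ onto the pair $\{u,v\}$, and checking that this local move does not create a new $k$-vertex induced subgraph with disconnected complement---and then counting with multiplicity. The main difficulty is that the degree ceiling forces graphs in $\mathcal{G}_n$ to be sparse, so producing enough switched graphs genuinely requires exploiting the structure of the forbidden $k$-subgraphs. A useful cushion, should the bare switching argument fall short by a small constant, is that the proof of Lemma\til\ref{lem:enumMatch} in fact gives the sharper estimate $\iota(G)\ge e(G)/(2d^2-2d+1)$, which relaxes the target edge-average; in particular, in the extreme case $k=3$ the class $\mathcal{G}_n$ is just matchings with $\iota(G)=e(G)$, so the whole claim reduces to the inequality $\mathrm{avg}\,e\ge(n-1)/4$, which is then directly accessible by induction from the matching recurrence $M_n=M_{n-1}+(n-1)M_{n-2}$.
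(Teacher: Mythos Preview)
Your degree bound $\Delta(G)\le k-2$ is correct and is also what the paper uses. The gap is in what follows. You reduce to the edge-average estimate $\frac{1}{|\mathcal{G}_n|}\sum_{G}e(G)\ge(n-1)/2$ and then only \emph{sketch} a switching attack: no map is actually defined, no check that a switch stays inside $\mathcal{G}_n$ is carried out, and you yourself anticipate that the constant may not close. More fundamentally, applying $\iota(G)\ge e(G)/(2(k-2)^2)$ to all of $G$ is wasteful, because on the part of $G$ consisting of isolated edges one has $\iota$ equal to the number of such edges exactly, with no $(k-2)^2$ loss---indeed your own $k=3$ patch already abandons Lemma~\ref{lem:enumMatch} for this reason. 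There is also a framework mismatch: $\mathcal{G}_n$ in the paper is a set of \emph{unlabeled} graphs, so neither the ``permutation symmetry'' reduction to a fixed pair nor the labeled matching recurrence $M_n=M_{n-1}+(n-1)M_{n-2}$ applies as written.

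The paper supplies precisely the missing decomposition. Write each $G\in\mathcal{G}_n$ as $G'\sqcup(\text{matching on $i$ edges})\sqcup(\text{isolated vertices})$, where $G'$ collects the connected components on at least three vertices, and partition $\mathcal{G}_n$ into equivalence classes according to $G'$. Within the class determined by a fixed $G'$ on $m$ vertices one has $\iota(G)=\iota(G')+i$, and averaging $i$ over $i=0,1,\dots,\lfloor(n-m)/2\rfloor$ contributes $\tfrac12\lfloor(n-m)/2\rfloor$ with \emph{no} loss factor. Only $G'$ is fed through Lemma~\ref{lem:enumMatch}: since each of its components has at least three vertices it has at least $\tfrac23 m$ edges, giving $\iota(G')\ge m/(3(k-2)^2)$. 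The elementary inequality
\[
\frac{m}{3(k-2)^2}+\frac{1}{2}\Big\lfloor\frac{n-m}{2}\Big\rfloor\ \ge\ \frac{n-1}{4(k-2)^2}
\]
then finishes uniformly in $k$ and $m$, with no switching and no edge-probability estimate required.
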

\begin{proof}
For a graph $G,$ let $G'$ be the induced subgraph of $G$ consisting of the connected components of $G$ on at least three vertices. Introduce an equivalence relation on $\mathcal{G}_n$ by $G_1 \sim G_2$ if
$G_1'$ and $G_2'$ are isomorphic (actually {\em the same}, as we deal with unlabeled graphs).

We now describe the equivalence classes. Take a class $\mathbf{G}\in\mathcal{G}_n/\sim$ and let $G'$ be the graph on $m$ vertices that is the same for any $G\in\mathbf{G}.$ Then the equivalence class $\mathbf{G}$ consists of the graphs 
$$ G'\   \sqcup\  (\text{matching on $i$ edges})\  \sqcup\   (\text{$n-m-2i$ isolated vertices})$$
for $i=0,1,\ldots, \lfloor (n-m)/2 \rfloor$. In particular,
$$\sum_{G \in \mathbf{G} } \iota(G) = \sum_{i=0}^{\lfloor(n-m)/2\rfloor} (\iota(G')+i)\\
= | \mathbf{G} |  \bigg( \iota(G') +  \frac{ \big\lfloor \frac{n-m}{2} \big\rfloor}{2} \bigg).$$
Next we estimate $ \iota(G')$. There are at least $\frac{2}{3}m'$ edges in each connected component of $G'$ on $m'$ vertices, as $m'\geq 3$. Thus, in total, there are at least $\frac{2}{3}m$ edges in $G'$.

The maximal degree $d$ of $G'$ is at most $k-2$, because if there was a vertex of $G'$ with $k-1$ neighbours, then the induced subgraph of that vertex and those neighbours would not have a connected complement. By Lemma~\ref{lem:enumMatch}, 
\[ \iota(G') \geq \frac{\# \text{edges in }G'}{2d^2}\geq\frac{\frac{2}{3}m }{ 2(k-2)^2 } = \frac{m}{3 (k-2)^2} \]
whenever $G'$ is non-empty. If $G'$ is empty, then $\iota(G') =  \frac{m}{3 (k-2)^2} =0$. 
Estimate
\[
\frac{m}{3 (k-2)^2}+ \frac{ \big\lfloor \frac{n-m}2 \big\rfloor}2 \geq \frac{m}{4 (k-2)^2} + \frac{  \frac{n-m}2 -  \frac12 }{2(k-2)^2} = \frac{n-1}{4 (k-2)^2}
\]
to get
\[  \sum_{G \in \mathbf{G} } \iota(G) \geq  \frac{  | \mathbf{G} |  (n-1) }{4 (k-2)^2},
\]
and altogether
\begin{align*}
\frac1{| \mathcal{G}_n |}\sum_{G \in  \mathcal{G}_n } \iota(G) &= \frac1{| \mathcal{G}_n |}\sum_{\mathbf{G} \in  \mathcal{G}_n / \sim }  \sum_{G \in \mathbf{G} }  \iota(G)  \\
&\geq \frac1{| \mathcal{G}_n |} \sum_{ \mathbf{G} \in  \mathcal{G}_n / \sim }     \frac{|\mathbf{G}|(n-1) }{4 (k-2)^2}  =  \frac{n-1}{4 (k-2)^2}  .
\end{align*}
\end{proof}

%\begin{theorem} 
%For $k\geq 3$, if $r\colon\mathbb N\to\mathbb N$ is a function such that almost all graphs in
%$$\{\text{graphs $G$ on $n$ vertices} \mid \beta_{k-2,\, k}(I_G)=0 \}$$
%satisfy $\reg(I_G) \leq r(n)$, then $r(n)$ goes to infinity with $n$.
%\end{theorem}

Recall that the Betti numbers of the form $\beta_{k-2,\,k}$ are on row $2$ of the Betti table. The first one of them is $\beta_{0,2}$, attained for $k=2$. Its vanishing is trivial, it means that the graph has no edges. Therefore we consider $k\ge3$ below. Put in simple terms, the following result states that for the Betti numbers on row\til$2$ there is no analog of our Theorem\til\ref{thm:main2}: if it were possible to generalize that theorem to the Betti numbers on row\til$2$,  for almost all graphs $G$ with $\beta_{k-2,\,k}(I_G)=0$ we would have $\reg(I_G)=2-1=1$, which does not really make sense as all Betti numbers on row\til$1$ vanish to begin with. But even if one relaxes the statement and just asks for a function bounding the regularity, we prove that such a function would diverge.

%\begin{theorem} 
%Fix $k\geq 3$. Let $r\colon\mathbb N\to\mathbb N$ be a function, and for any natural number $n$ denote
%\begin{align*}
%\mathcal G(n)&=\{\text{graphs $G$ on $n$ vertices} \mid \beta_{k-2,\, k}(I_G)=0 \}\\
%\mathcal R(n)&=\{\text{graphs $G$ on $n$ vertices} \mid \beta_{k-2,\, k}(I_G)=0 \text{ and }\reg(I_G) \leq r(n)\}.
%\end{align*}
%Assume that $\lim_{n\to\infty}\frac{|\mathcal G(n)|}{|\mathcal R(n)|}=1$. Then $r(n)$ goes to infinity with $n$.
%\end{theorem}
%
%\begin{proof}
%For a graph $G$, by Hochster's formula, it is equivalent that $\beta_{k-2,\,k}(I_G)=0$ and that the complement of $H$ is connected for all induced subgraphs $H$ of $G$ order $k$. So the set $\mathcal G(n)$ in the statement of this theorem is the same as $\G_n$ of Lemma\til\ref{lem:eMatch}.
%
%By Lemma~\ref{lem:eMatch}, the average order of maximal induced matchings of graphs $G$ on $n$ vertices with $\beta_{k-2,k}(I_G)=0$ is at least $\frac{n-1}{4 (k-2)^2}$. 
%
%It is well known that an induced matching on $e$ edges has as independence complex a hyper-octahedron with the homology of a sphere of dimension $e-1$. Therefore an induced matching on $e$ edges in a graph $G$ forces the first $e$ Betti numbers on the main diagonal of the Betti table of $I_G$ to be non-zero, and in particular the regularity of $I_G$ is at least $e$.
%Thus, $r(n)$ grows at least linearly in $n$, with a coefficient $\frac{ 1 }{4 (k-2)^2}$.
%\end{proof}

\begin{theorem}
Let $k \geq 3$ and $r\colon\mathbb{N}\rightarrow\mathbb{N}$ be a function. For any natural number $n$, denote
\begin{align*}
\mathcal G(n)&:=\{\text{graphs $G$ on $n$ vertices} \mid \beta_{k-2,\, k}(I_G)=0\},\\
\mathcal R(n)&:=\{\text{graphs $G$ on $n$ vertices} \mid \beta_{k-2,\, k}(I_G)=0 \text{ and }\reg(I_G) \leq r(n)\}.
\end{align*}
If $\lim_{n\rightarrow \infty} \frac{|\mathcal R(n)|}{|\mathcal G(n)|} = 1$, then $\lim_{n\rightarrow \infty} r(n) = \infty$.
\end{theorem}

\begin{proof}
Assume on the contrary that $r(n)$ does not diverge. Then there exists some constant $K>0$ such that for every $n'$ there is an $n \ge n'$ with $r(n) \le K$.
Moreover, for every $\varepsilon > 0$ there is an $n'$ with 
$$\frac{|\mathcal R(n)|}{|\mathcal G(n)|} > 1- \varepsilon$$
for all $n \geq n'$, as the quotient is assumed to converge to\til$1$. 
Combining these two facts repeatedly produces an infinite sequence $n_1 < n_2 < n_3 < \dots$ of integers satisfying
$$\frac{ | \{ G \in \mathcal{G}(n_i) \mid  \mathrm{reg}(I_G) \leq K \} |}{ |  \mathcal{G}(n_i) |} > 1 - \varepsilon.$$
We may assume that $K<n_1$ to simplify the next computation. The regularity of any ideal $I_G$ associated to $G \in \mathcal{G}(n_i)$ is at most $n$, and thus,
$$\frac{ 1}{|\mathcal{G}(n_i)|} \sum_{G \in \mathcal{G}(n_i)}\mathrm{reg}(I_G) \leq (1- \varepsilon) K + \varepsilon n_i.$$
Next we produce a sum that contradicts the one above.
For a graph $G$, by Hochster's formula, the fact that $\beta_{k-2,\,k}(I_G) = 0$ is equivalent to the fact that the complement of $H$ is connected, for all induced subgraphs $H$ of $G$ order $k$. So, the set $\mathcal{G}(n)$ is the same as $\mathcal G_n$ in Lemma\til\ref{lem:eMatch}. According to that lemma,  the average order of maximal induced matchings of graphs $G$ on $n$ vertices with $\beta_{k-2,\,k}(I_G) = 0$ is at least $\frac{n-1}{4(k-2)^2}$. It is well known that an induced matching on $e$ edges has as independence complex a hyper-octahedron with the homology of a sphere of dimension $e-1$. Therefore an induced matching on $e$ edges in a graph $G$ forces the first $e$ Betti numbers on the main diagonal of the Betti table of $I_G$ to be non-zero, and in particular the regularity of $I_G$ is at least $e$. Thus,
$$\frac{n-1}{4(k-2)^2}\leq \frac{ 1}{|\mathcal{G}(n)|}\sum_{G \in \mathcal{G}(n) }   \mathrm{reg}(I_G)$$
for all $n$. Combining the two preceding inequalities yields that
$$\frac{n_i-1}{4(k-2)^2} \leq (1- \varepsilon) K + \varepsilon n_i$$
for an infinite sequence $n_1 < n_2 < n_3 < \dots$ of integers. Now set $\varepsilon := \frac{1}{5(k-2)^2}$ to get the desired contradiction. 
\end{proof}

\subsection{Non-parabolic Betti numbers on row $3$}\label{sec:BNrow3}

Our Theorem~\ref{thm:main2} concerns parabolic Betti numbers on row $r\ge3$.
In this section we illustrate what happens when we consider the first Betti numbers in row\til$3$ that are not parabolic, namely $\beta_{2,5}$ and $\beta_{3,6}$. For $\beta_{2,5}$ we are still able to show that almost all graphs $G$ with $\beta_{2,5}(I_G)=0$ satisfy $\reg(I_G)=2$, using a very ad hoc argument involving critical graphs. For $\beta_{3,6}$ we show that this is not possible.

For any Betti number on row\til$3$, Hochster's formula becomes
$$\beta_{i,3+i}(I_G)=\sum_{W\in\binom{V_G}{3+i}}\dim_\mathbb{K} \tilde H_1\big(\mathrm{Ind}(G)[W]\big).$$
The dimension of the degree-one homology of a complex $\Delta$ is the number of one-dimensional holes of $\Delta$. In our case $\Delta=\mathrm{Ind}(G)$ is a flag complex, and such a complex has one-dimensional holes if and only if the $1$-skeleton (that is, the underlying graph of $\Delta$, or equivalently the complement of $G$) contains an induced $k$-cycle with $k\ge4$, namely if and only if the $1$-skeleton is not chordal. In short,  $H_1(\mathrm{Ind}(G)) \ne 0$ if and only if the complement of $G$ is not chordal.

The Betti number $\beta_{2,5}(I_G)$ is determined by the number of induced subgraphs of $G$ on \emph{five} vertices whose complement contains some induced cycles $C_k$ with $k\ge4$. The only graph on five vertices with an induced copy of\til$C_5$ is $C_5$ itself. As for $C_4$, one may start from $C_4$ and add a fifth vertex in all possible ways without destroying the presence of an induced $C_4$, as follows:
$$\begin{tikzpicture}[>=latex]
\draw (0,0)--(1,0)--(1,1)--(0,1)--cycle;
\fill (0,0) circle (0.1);
\fill (1,0) circle (0.1);
\fill (1,1) circle (0.1);
\fill (0,1) circle (0.1);
\fill (.5,.5) circle (0.1);
\end{tikzpicture}\qquad\qquad
\begin{tikzpicture}[>=latex]
\draw (0,0)--(1,0)--(1,1)--(0,1)--cycle;
\fill (0,0) circle (0.1);
\fill (1,0) circle (0.1);
\fill (1,1) circle (0.1);
\fill (0,1) circle (0.1);
\fill (.5,.5) circle (0.1);
\draw (0,0)--(.5,.5);
\end{tikzpicture}\qquad\qquad
\begin{tikzpicture}[>=latex]]
\draw (0,0)--(1,0)--(1,1)--(0,1)--cycle;
\fill (0,0) circle (0.1);
\fill (1,0) circle (0.1);
\fill (1,1) circle (0.1);
\fill (0,1) circle (0.1);
\fill (.5,.5) circle (0.1);
\draw (0,0)--(.5,.5);
\draw (1,1)--(.5,.5);
\end{tikzpicture}\qquad\qquad
\begin{tikzpicture}[>=latex]
\draw (0,0)--(1,0)--(1,1)--(0,1)--cycle;
\fill (0,0) circle (0.1);
\fill (1,0) circle (0.1);
\fill (1,1) circle (0.1);
\fill (0,1) circle (0.1);
\fill (.5,.5) circle (0.1);
\draw (0,0)--(.5,.5);
\draw (1,0)--(.5,.5);
\end{tikzpicture}\qquad\qquad
\begin{tikzpicture}[>=latex]
\draw (0,0)--(1,0)--(1,1)--(0,1)--cycle;
\fill (0,0) circle (0.1);
\fill (1,0) circle (0.1);
\fill (1,1) circle (0.1);
\fill (0,1) circle (0.1);
\fill (.5,.5) circle (0.1);
\draw (0,0)--(.5,.5);
\draw (1,1)--(.5,.5);
\draw (1,0)--(.5,.5);
\end{tikzpicture}
$$
The complements of the graphs above are, respectively,
$$\begin{tikzpicture}[>=latex]
\draw (0,0)--(1,0)--(.5,.75)--cycle;
\fill (0,0) circle (0.1);
\fill (1,0) circle (0.1);
\fill (.5,.75) circle (0.1);
\fill (0,1.5) circle (0.1);
\fill (1,1.5) circle (0.1);
\draw (0,1.5)--(.5,.75)--(1,1.5)--cycle;
\coordinate [label=$H_1$] (G) at (.5,-.8);
\end{tikzpicture}\qquad\qquad
\begin{tikzpicture}[>=latex]
\draw (0,0)--(1,0)--(.5,.5)--cycle;
\fill (0,0) circle (0.1);
\fill (1,0) circle (0.1);
\fill (.5,.5) circle (0.1);
\fill (.5,1) circle (0.1);
\fill (.5,1.5) circle (0.1);
\draw (.5,.5)--(.5,1)--(.5,1.5);
\coordinate [label=$H_2$] (G) at (.5,-.8);
\end{tikzpicture}\qquad\qquad
\begin{tikzpicture}[>=latex]
\draw (0,0)--(1,0);
\fill (0,0) circle (0.1);
\fill (1,0) circle (0.1);
\fill (1,.75) circle (0.1);
\fill (0,.75) circle (0.1);
\fill (.5,1.5) circle (0.1);
\draw (0,.75)--(.5,1.5)--(1,.75)--cycle;
\coordinate [label=$H_3$] (G) at (.5,-.8);
\end{tikzpicture}\qquad\qquad
\begin{tikzpicture}[>=latex]
\draw (0,0)--(1,1.5);
\fill (0,0) circle (0.1);
\fill (1,1.5) circle (0.1);
\fill (.25,.3875) circle (0.1);
\fill (.5,.75) circle (0.1);
\fill (.75,.75+.3875) circle (0.1);
\coordinate [label=$H_4$] (G) at (.5,-.8);
\end{tikzpicture}\qquad\qquad
\begin{tikzpicture}[>=latex]
\draw (0,0)--(1,0);
\fill (0,0) circle (0.1);
\fill (1,0) circle (0.1);
\fill (1,.75) circle (0.1);
\fill (0,.75) circle (0.1);
\fill (.5,1.5) circle (0.1);
\draw (0,.75)--(.5,1.5)--(1,.75);
\coordinate [label=$H_5$] (G) at (.5,-.8);
\end{tikzpicture}
$$
and the complement of $C_5$ is $H_6:=C_5$.

%\begin{lemma}
%The graph $H_1$ is not critical.
%\end{lemma}
%
%
%\begin{lemma}
%The graph $H_2$ is critical. Almost all $H_2$-free graphs are $(0,2)$-templates or $(1,1)$-templates.
%\end{lemma}
%
%
%\begin{lemma}
%The graph $H_3$ is not critical.
%\end{lemma}
%
%\begin{proof}
%t=2 s=0\\
%t=1 s=2\\
%t=0 s=3\\
%$chi_c(H)=3$\\
%s+t=1\\
%s=1 t=0\\
%If we cover the K2 with the clique a K3 remains; and if we cover the K3
%a K2 remains K3. The minimal remaining is K2. The large graphs without a K2
%is the complement of Kn.\\
%s=0 t=1\\
%We cover one vertex of the K2 and one of the K3. What remains is K1 + K2.\\
%There are no induced K1+K2 in complete bipartite graphs, and there are\\
%more than 2 of them for large n, thus H is not critical.
%\end{proof}

%\begin{lemma}
%The graph $H_4$ is critical. Almost all $H_4$-free graphs are $(1,1)$-templates of $(2,0)$-templates.
%\end{lemma}
%
%\begin{proof}
%s=3 t=0\\
%s=2 t=1\\
%s=0 t=2\\
%$chi_c(H)=3$\\
%s+t=1\\
%s=1 t=0\\
%If the clique is an edge containing an end point of P5, then what remains is P3. If it is not an end point it is P2+K1 left. |P(n,F(H,1,0))|=2 was proved in that situation for H=P3+P2. (which should be $H_5$ WITH OUR NOTATION)\\
%s=0 t=1.\\
%This is exactly the same is the second of the H=P3+P2 case also.\\
%Thus, P5 is critical.\\
%Witnessing pairs with s+t=2 to be identified.\\
%s=2 t=0 is a witness.\\
%s=1 t=1 is a witness.\\
%s=0 t=2 is not a witness.
%\end{proof}
%
%
%\begin{lemma}
%The graph $H_5$  is critical. Almost all $H_5$-free graphs are $(1,1)$-templates or $(2,0)$-templates, as for $H_4$.
%\end{lemma}

\begin{lemma}\label{lem:beta25}
The graphs $H_2$, $H_4$ and $H_5$ are critical. Furthermore, by considering the appropriate witnessing pairs, the following hold:
\begin{enumerate}
\item[(a)] almost every $H_2$-free graph is a $(0, 2)$-template or a $(1, 1)$-template,
\item[(b)] almost every $H_4$-free graph is a $(1, 1)$-template or a $(2, 0)$-template,
\item[(c)] almost every $H_5$-free graph is a $(1, 1)$-template or a $(2, 0)$-template.
\end{enumerate}
The graphs $H_1$, $H_3$ and $H_6$ are not critical.
\end{lemma}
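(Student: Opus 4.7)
I would handle all six graphs by direct unpacking of Definition~\ref{def:critical}, and then convert the criticality conclusions into template statements through Lemma~\ref{mainthmbb}. The first step is a small computation showing $\chi_{\mathrm c}(H_i)=3$ for every $i$, which in turn determines the witnessing pairs: $(1,1)$ and $(0,2)$ for $H_2$ (since $(2,0)$ covers via the triangle and the pendant $K_2$), and $(2,0),(1,1)$ for the bipartite graphs $H_4=P_5$ and $H_5=P_3\sqcup K_2$ (where $(0,2)$ covers instead).

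Because $\chi_{\mathrm c}-2=1$, checking criticality boils down to inspecting only the two families $\mathcal F(H_i,1,0)$ and $\mathcal F(H_i,0,1)$. The plan is to enumerate the maximum cliques and maximum independent sets in each $H_i$, compute the residue graphs, and discard those not minimal under induced containment. In every case the residues land in a short list whose corresponding $\mathcal P(n,\cdot)$ are easy to identify:
\begin{align*}
\mathcal P(n,\{K_2\})&=\{\overline{K_n}\}, & \mathcal P(n,\{\overline{K_2}\})&=\{K_n\},\\
\mathcal P(n,\{P_3\})&=\{\text{disjoint unions of cliques}\}, & \mathcal P(n,\{K_1\sqcup K_2\})&=\{\text{complete multipartite graphs}\},
\end{align*}
together with the combined identity $\mathcal P(n,\{P_3,K_1\sqcup K_2\})=\{K_n,\overline{K_n}\}$, which holds because a graph that is simultaneously $P_3$-free (hence a disjoint union of cliques) and $(K_1\sqcup K_2)$-free (hence complete multipartite) is forced to be one of the two trivial graphs on $n$ vertices.

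For $H_2$, $H_4$, $H_5$ the families $\mathcal F(H_i,s,t)$ with $s+t=1$ turn out to be one of $\{K_2\}$, $\{\overline{K_2}\}$ or $\{P_3,K_1\sqcup K_2\}$, so the corresponding $\mathcal P(n,\mathcal F)$ has at most two elements: thus the three graphs are critical, and Lemma~\ref{mainthmbb} delivers (a), (b), (c). For the non-critical side, it suffices to exhibit one pair $(s,t)$ producing unboundedly many host graphs: for the bowtie $H_1$, removing a maximum independent set of size two leaves $P_3$, so $\mathcal F(H_1,0,1)=\{P_3\}$ and $\mathcal P(n,\{P_3\})$ contains arbitrarily many cluster graphs; for $H_3=K_3\sqcup K_2$ the same pair gives $\mathcal F(H_3,0,1)=\{K_1\sqcup K_2\}$, hence arbitrarily many complete multipartite graphs; and for $H_6=C_5$ the non-criticality is already worked out in Example~\ref{examplecritical}.

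The foreseen obstacle is bookkeeping rather than conceptual: I must carefully verify the minimality clause in each $\mathcal F(H_i,s,t)$, ensuring that no residue arising from a suboptimal clique or independent-set choice strictly embeds into one from an optimal choice. The most delicate case is $\mathcal F(H_2,0,1)$, where different maximum independent sets (a vertex from the triangle together with one end of the pendant path) produce the two incomparable residues $P_3$ and $K_1\sqcup K_2$ simultaneously; this is precisely why the combined identity $\mathcal P(n,\{P_3,K_1\sqcup K_2\})=\{K_n,\overline{K_n}\}$ above is essential, rather than either single-residue bound in isolation.
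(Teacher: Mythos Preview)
Your plan is essentially the paper's approach: the paper only spells out the case of $H_5$ and declares the others ``very similar'', whereas you sketch all six, but the underlying method---compute $\chi_{\mathrm c}=3$, list the residues for $(1,0)$ and $(0,1)$, and read off $\mathcal P(n,\cdot)$---is identical.

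One small gap to fix: for the bowtie $H_1$ you write $\mathcal F(H_1,0,1)=\{P_3\}$, but the centre vertex of the bowtie is itself a \emph{maximal} (though not maximum) independent set, and removing it leaves $2K_2$, which is not comparable to $P_3$ under induced containment. Hence $\mathcal F(H_1,0,1)=\{P_3,\,2K_2\}$. Your inference ``$\mathcal P(n,\{P_3\})$ is large, so $H_1$ is not critical'' therefore does not follow as stated, because enlarging $\mathcal F$ shrinks $\mathcal P(n,\mathcal F)$. The conclusion survives, however: a graph avoiding both $P_3$ and $2K_2$ is a disjoint union of cliques with at most one nontrivial clique, i.e.\ $K_m\sqcup\overline{K_{n-m}}$, and there are still roughly $n$ of these. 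More generally, be careful to enumerate \emph{maximal} rather than maximum cliques and independent sets when computing $\mathcal F(H,s,t)$; your later remark about ``the minimality clause'' shows you are aware of the issue, but the $H_1$ computation slipped through.
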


\begin{proof}
We just show that $H_5=P_3\sqcup P_2$ is critical, the other cases being very similar.
First of all we determine that the coloring number is $\chi_\mathrm c(H_5)=3$: indeed $(3,0)$, $(2,1)$, $(1,2)$ and $(0,3)$ are covering pairs, whereas $(2,0)$ and $(1,1)$ are the witnessing pairs. To determine the criticality of $H_5$ we need to consider the pairs $(s,t)$ with $s+t=\chi_\mathrm c(H_5)-2=1$.

We start with $(s,t)=(1,0)$. 
After removing a maximal clique, that in this case is an edge, one is either left with $P_3$ or $P_2\sqcup K_1$.
We need to determine what graphs on $n$ vertices, for large $n$, do not contain any induced copies of $P_3$ nor $P_2\sqcup K_1$.
The graph on $n$ vertices without edges is one of them.
If $G$ has at least one edge, let $uv$ be an edge. For any $w\notin\{u,v\}$, also $uw$ and $vw$ need to be edges in order to avoid an induced $P_3$ or $P_2\sqcup K_1$.
Thus $u$ and $v$ are adjacent to all the vertices of $G$. Take any vertex $w\notin\{u,v\}$. We may repeat the same argument for the new edge $uw$, so that $w$  is also adjacent to all vertices of $G$.  And by repeating this for all vertices, it turns out that the graph is complete. So 
$$|\mathcal P(n,\mathcal F(H_5,1,0))|=|\{K_n,\overline{K_n}\}|=2.$$

The other pair to examine is $(s,t)=(0,1)$.
There are two maximal independent sets (up to isomorphism). The first one consists of three vertices: both of the endpoints of $P_3$ and one of $P_2$; and the second one consists of the middle vertex of $P_3$ and a vertex of\til$P_2$. After removing such a maximal independent set, we are left respectively with three isolated vertices or
two isolated vertices. Since we need to care only about the minimal ones by induced inclusion (by Definition\til\ref{defFF}), we just need to consider the graph consisting of two isolated vertices, denoted $\overline{K_2}$.
For large $n$, there is only one graph on $n$ vertices without an induced $\overline{K_2}$, and that is the complete graph $K_n$.  So 
$$|\mathcal P(n,\mathcal F(H_5,0,1))|=|\{K_n\}|=1.$$
Thus $H_5$ is critical.
By Lemma\til\ref{mainthmbb}, almost every $H_5$-graph is an $(s,t)$-template where $(s,t)$ is some witnessing pair of $H_5$, hence we get part (c) of the statement.
\end{proof}

%\begin{lemma}
%Let $G$ be $\{H_1,H_2,H_3\}$-free.
%Assume that $G$ can be covered by two cliques $A$ and $B$, where $|A|\ge2$ and $|B|\ge3$.
%For any four vertices $a_1,a_2,b_1,b_2$ with $a_1$ and $a_2$ in $A$, and $b_1$ and $b_2$ in $B$, some pair $a_i,b_j$ is adjacent. Or, equivalently, the bipartite graph between $A$ and $B$ in the complement of $G$ does not have any $4$-cycles.
%\end{lemma}
%\begin{proof}
%Select some third vertex $b_3$ in $B$ and assume that all $a_i,b_j$ are non-adjacent.
%
%If both $a_1b_3$ and $a_2b_3$ are edges, then there is an induced $H_1$.
%
%If exactly one of $a_1b_3$ and $a_2b_3$ is an edge, then there is an induced $H_2$.
%
%If neither $a_1b_3$ nor $a_2b_3$ are edges, then there is an induced $H_3$.
%\end{proof}
%\begin{proposition}
%Almost all $(2,0)$-templates contain one of $H_1$, $H_2$ or $H_3$ as an induced subgraph.
%\end{proposition}
%\begin{proof}
%Almost all bipartite graphs have a four-cycle, by the solution of the first case of the Zarankiewicz problem.
%\end{proof}

\begin{theorem}
For almost all graphs $G$ with $\beta_{2,5}(I_G)=0$, one has $\reg(I_G)=2$.
\end{theorem}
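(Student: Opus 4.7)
The plan is to reduce to showing that almost every graph in $\mathcal{B}(n):=\{G:\beta_{2,5}(I_G)=0\}$ is a split graph; once this is established, Lemma\til\ref{lem:Russ} gives $\reg(I_G)\le2$ for every split graph, and $\reg(I_G)\ge2$ holds whenever $G$ has at least one edge (a condition satisfied by all but one graph per $n$). Denote by $\mathcal{N}_{02}$, $\mathcal{N}_{11}$, $\mathcal{N}_{20}$ the families of bipartite, split, and co-bipartite graphs on $n$ vertices respectively. We have $\mathcal{N}_{11}\subseteq\mathcal{B}(n)$ by Lemma\til\ref{lem:Russ} applied with $d=1$, and since $\beta_{2,5}(I_G)=0$ is equivalent to $G$ being $\{H_1,\dots,H_6\}$-free (by Hochster's formula on $5$-vertex induced subgraphs), we also have $\mathcal{B}(n)\subseteq\mathcal{P}(n,H_2)\cap\mathcal{P}(n,H_5)$.

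Parts (a) and (c) of Lemma\til\ref{lem:beta25}, combined with Lemma\til\ref{mainthmbb}, tell us that almost every $H_2$-free graph lies in $\mathcal{N}_{02}\cup\mathcal{N}_{11}$ and almost every $H_5$-free graph lies in $\mathcal{N}_{11}\cup\mathcal{N}_{20}$. The crucial combinatorial input is that $\mathcal{N}_{02}\cap\mathcal{N}_{20}=\emptyset$ for $n\ge5$: given partitions $V=A\sqcup B$ into two independent sets and $V=C\sqcup D$ into two cliques, each of the four pieces $A\cap C$, $A\cap D$, $B\cap C$, $B\cap D$ is simultaneously independent and a clique, hence has at most one vertex, forcing $|V|\le 4$.

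Combining these for $G\in\mathcal{B}(n)\setminus\mathcal{N}_{11}$, and splitting into cases according to whether $G\in\mathcal{N}_{02}$ or not, yields for $n\ge5$ the inclusion
\[
\mathcal{B}(n)\setminus\mathcal{N}_{11}\;\subseteq\;\bigl[\mathcal{P}(n,H_2)\setminus(\mathcal{N}_{02}\cup\mathcal{N}_{11})\bigr]\;\cup\;\bigl[\mathcal{P}(n,H_5)\setminus(\mathcal{N}_{11}\cup\mathcal{N}_{20})\bigr].
\]
Indeed, if $G\in\mathcal{B}(n)\setminus\mathcal{N}_{11}$ is not in $\mathcal{N}_{02}\cup\mathcal{N}_{11}$ then it sits in the first bracketed set via $\mathcal{B}(n)\subseteq\mathcal{P}(n,H_2)$, while if $G\in\mathcal{N}_{02}$ then $G\notin\mathcal{N}_{20}$ by the observation, so $\mathcal{B}(n)\subseteq\mathcal{P}(n,H_5)$ places $G$ in the second bracketed set. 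By Lemma\til\ref{mainthmbb} each bracket is $o(|\mathcal{P}(n,H_i)|)$, and Lemmas\til\ref{lem:countPv2} and\til\ref{lem:countQ} show that $|\mathcal{P}(n,H_2)|$, $|\mathcal{P}(n,H_5)|$, and $|\mathcal{N}_{11}|$ are of the same asymptotic order, so both brackets are in fact $o(|\mathcal{N}_{11}|)$. Since $|\mathcal{B}(n)|\ge|\mathcal{N}_{11}|$, this gives $|\mathcal{B}(n)\setminus\mathcal{N}_{11}|/|\mathcal{B}(n)|\to0$, as required.

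The main obstacle I foresee is the last asymptotic step: transferring ``$o(|\mathcal{P}(n,H_i)|)$'' estimates to ``$o(|\mathcal{N}_{11}|)$'' estimates. This requires knowing that $\mathcal{N}_{02}$, $\mathcal{N}_{11}$, and $\mathcal{N}_{20}$ have cardinalities of the same asymptotic order, which, together with Lemma\til\ref{mainthmbb} giving $|\mathcal{P}(n,H_i)|\sim|\mathcal{Q}(n,H_i)|$, yields comparable sizes. An alternative would be to run the squeeze argument of Theorem\til\ref{thm:main1} separately for $H_2$ and $H_5$ and then combine the resulting ``almost all'' statements using the crucial combinatorial fact $\mathcal{N}_{02}\cap\mathcal{N}_{20}=\emptyset$.
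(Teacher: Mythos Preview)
Your approach is essentially the same as the paper's: reduce to showing that almost every $\{H_2,H_5\}$-free graph is split, invoke Lemma~\ref{lem:beta25} for $H_2$ and $H_5$ separately, and intersect. Your explicit inclusion
\[
\mathcal{B}(n)\setminus\mathcal{N}_{11}\subseteq\bigl[\mathcal{P}(n,H_2)\setminus(\mathcal{N}_{02}\cup\mathcal{N}_{11})\bigr]\cup\bigl[\mathcal{P}(n,H_5)\setminus(\mathcal{N}_{11}\cup\mathcal{N}_{20})\bigr]
\]
via the elementary observation $\mathcal{N}_{02}\cap\mathcal{N}_{20}=\emptyset$ for $n\ge5$ is in fact a cleaner formalisation of what the paper sketches verbally as ``asymptotically the intersection consists completely of $(1,1)$-templates.''

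The obstacle you flag is real and is precisely where your argument remains incomplete. Lemmas~\ref{lem:countPv2} and~\ref{lem:countQ} only give bounds of the form $2^{(1/2\pm\varepsilon)\binom{n}{2}}$; this does \emph{not} yield that $|\mathcal{P}(n,H_i)|/|\mathcal{N}_{11}|$ is bounded, which is what you need to convert $o(|\mathcal{P}(n,H_i)|)$ into $o(|\mathcal{N}_{11}|)$. The paper closes exactly this gap by appealing to Corollary~2.20 of Pr\"omel--Steger~\cite{PS92}, which gives the sharp asymptotic $|\mathcal{N}_{11}|\sim 2|\mathcal{N}_{20}|=2|\mathcal{N}_{02}|$. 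With that in hand, $|\mathcal{N}_{02}\cup\mathcal{N}_{11}|\le|\mathcal{N}_{02}|+|\mathcal{N}_{11}|\sim\tfrac{3}{2}|\mathcal{N}_{11}|$, so each bracket is $o(|\mathcal{N}_{11}|)$ and your argument goes through verbatim. You identified the right missing ingredient; you just need the correct reference rather than Lemmas~\ref{lem:countPv2}--\ref{lem:countQ}.
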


\begin{proof}
We know that the vanishing of $\beta_{2,5}(I_G)$ implies that $G$ is in particular $\{H_2,H_5\}$-free.
We show that almost all the $\{H_2,H_5\}$-free graphs are $(1,1)$-templates. Then, by Lemma\til\ref{lem:Russ}, almost all graphs with $\beta_{2,5}(I_G)=0$ have edge ideal $I_G$ with $\reg(I_G)=2$.

Recall that the $(0,2)$-templates are the bipartite graphs and the $(1,1)$-templates are the split graphs. %Theorem\til1.1 of\til\cite{PS92} states that the number of $C_4$-free graphs is asymptotically the number of $(1,1)$-templates. 
The number of $(0,2)$-templates, that is, bipartite graphs, is equal to the number of $(2,0)$-templates.

We consider $H_2$ first. Denote 
\begin{align*}
\mathcal T_{(1,1)}(n)&:=\{\text{$(1,1)$-templates on $n$ vertices}\},\\
\mathcal T_{(0,2)}(n)&:=\{\text{$(0,2)$-templates on $n$ vertices}\},\\
\mathcal H(n)&:=\mathcal P(n,H_2)=\{\text{$H_2$-free graphs on $n$ vertices}\}.
\end{align*}
Observe that $\mathcal T_{(1,1)}(n)\cup\mathcal T_{(0,2)}(n)\subset \mathcal H(n)$, and moreover there is only one graph that is both in $\mathcal T_{(1,1)}(n)\cap\mathcal T_{(0,2)}(n)$, namely the star on $n$ vertices. By Corollary\til2.20 of\til\cite{PS92}, the number of $(1,1)$-templates is asymptotically twice the number of $(2,0)$-templates, so that $\lim_{n\to\infty}\frac{|\mathcal T_{(1,1)}(n)|}{|\mathcal T_{(0,2)}(n)|}=2$.
By Lemma\til\ref{lem:beta25}, we know that almost all $H_2$-free graphs are $(0,2)$-templates or $(1,1)$-templates, which is the first equality in
$$1=\lim_{n\to\infty}\frac{|\mathcal H(n)|}{|\mathcal T_{(1,1)}(n)\cup\mathcal T_{(0,2)}(n)|}
=\lim_{n\to\infty}\frac{|\mathcal H(n)|}{\frac32\times|\mathcal T_{(1,1)}(n)|}
=\lim_{n\to\infty}\frac{|\mathcal H(n)|}{3\times|\mathcal T_{(0,2)}(n)|}.$$
This means that, out of the $H_2$-free graphs, asymptotically $2/3$ are $(1,1)$-templates, $1/3$ are $(0,2)$-templates, and $0$ are something else.

By Lemma\til\ref{lem:beta25}, we also know that almost all $H_5$-free graphs are $(1,1)$-templates or $(2,0)$-templates.
And out of the $H_5$-free graphs, asymptotically $2/3$ are $(1,1)$-templates, $1/3$ are $(2,0)$-templates, and $0$ are something else.
The number of $H_2$-free graphs and the number of $H_5$-free graphs are equal, asymptotically.
Thus, aymptotically the intersection of the $H_2$-free graphs and the $H_5$-free graphs consists completely of $(1,1)$-templates.%, and $0$ is something else.%Similarly, the $\{H2,H4,H5\}$-free are 100\% the (1,1)-templates, and 0 something else, asymptotically.
\end{proof}

\begin{example}
There are $(2,0)$-templates without four-cycles between the two covering
cliques and which still have quite many edges. The canonical extremal examples are incidence graphs of finite projective planes. The first example is the complement of the Heawood graph. (The corresponding simplicial complex is the Bruhat--Tits building of $SL_3(\mathbb Z_2)$.) The Betti table is
$$\begin{array}{c|cccccccccccc}
&0 & 1&   2    &3    &4    &5    &6    &7    &8    &9  &10 &11 \\
\hline
2& 70& 476 &1617 &3388 &4648 &4184 &2394  &826  &161  &14  &-  &-\\
3&  -&   -&    -&   28  &224  &777 &1442 &1547  &994 &385 &84  &\,\,8.
\end{array}$$
One can see that $\beta_{2,5}(I_G)=0$ but the resolution is not $2$-linear. The number $\beta_{3,6}(I_G)=28$ in row $3$ is the number of six-cycles in the Heawood graph.
\end{example}

\begin{remark}
To conclude this section we observe that the same type of argument as for $\beta_{2,5}$, on graphs with five vertices,  cannot be applied to $\beta_{3,6}$, that is, with induced subgraphs on six vertices. This is demonstrated by the existence of critical graphs with completely different templates.
These are some of the critical graphs on six vertices with non-zero homology of degree\til$1$:
\begin{itemize}
\item The disjoint union of two paths on three vertices each, call it $H_1$. Almost all $H_1$-free graphs are $(3,0)$-templates. 
\item The disjoint union of a square and an edge, call it $H_2$. Almost all $H_2$-free graphs are $(2,1)$-templates. 
\item The disjoint union of two triangles, call it $H_3$. Almost all $H_3$-free graphs are $(1,2)$-templates. 
\item The disjoint union of a $K_4$ and an edge, call it $H_4$. Almost all $H_4$-free graphs are $(0,3)$-templates. 
\end{itemize}
One has in fact all possible $(s,t)$-templates with $s+t=3$, and clearly they are not compatible in an argument similar to the one for $\beta_{2,5}$.
\end{remark}

%%%%%%%%%%%%%%%%%%%%%%%%%%%%%%

\section{Large homogenous sets: the Erd\H{o}s--Hajnal conjecture}\label{sec:erdos}

A subset of vertices in a graph is called a \emph{homogenous set} if it spans a clique or an independent set. In this section we relate our results to the following famous conjecture:

\begin{conjecture}[Erd\H{o}s--Hajnal \cite{ErHa1,ErHa2}]
For every graph $H$, there is a constant $\tau$ such that any $H$-free graph $G$ has a homogenous set of order at least $|G|^\tau$.
\end{conjecture}
It is a difficult conjecture, and versions where several graphs $H$ are forbidden, or almost all $H$-free graphs $G$ are considered, have been studied before. Sometimes, under stronger assumptions, a stronger result than Erd\H{o}s--Hajnal is true, stating that there is a homogenous set of linear order $\alpha|G|$ for some $\alpha >0$. In an $(s,t)$-template $G$ there is always a linear-order homogenous set with $\alpha=1/(s+t)$ as the vertices of $G$ may be partitioned into $s$ cliques and $t$ independent sets. The following  is an immediate consequence of Theorem~\ref{thm:main1}.

\begin{corollary}
Let $\beta_{i,j}$ be a parabolic Betti number on the $r$-th row of the Betti table, for $r\ge3$. In almost every graph $G$ with $\beta_{i,j}(I_G)=0$ there is a homogenous set of order $|G|/(r-1)$.
\end{corollary}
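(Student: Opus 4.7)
The plan is to invoke Theorem~\ref{thm:main1} to reduce the statement to a purely combinatorial fact about $(r-2,1)$-templates, and then apply pigeonhole. By Theorem~\ref{thm:main1}, almost every graph $G$ with $\beta_{i,j}(I_G)=0$ is an $(r-2,1)$-template, so it suffices to show that every $(r-2,1)$-template on $n$ vertices contains a homogeneous set of order at least $n/(r-1)$.

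First I would unwind Definition~\ref{defcovtemp} for the case $(s,t)=(r-2,1)$: the vertex set $V_G$ is partitioned into $s+t=r-1$ subsets $V_1,\dots,V_{r-1}$, where $G[V_i]$ is a clique for $1\le i\le r-2$ and $G[V_{r-1}]$ is an independent set. Each $V_i$ is therefore a homogeneous set (either a clique or an independent set), and by the pigeonhole principle at least one of them has cardinality at least $|V_G|/(r-1)=|G|/(r-1)$.

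Combining the two steps, we conclude that almost every graph $G$ with $\beta_{i,j}(I_G)=0$ admits a homogeneous set of size at least $|G|/(r-1)$, as claimed. There is no real obstacle here: once Theorem~\ref{thm:main1} is in hand, the only content is the elementary pigeonhole observation that an $(s,t)$-template on $n$ vertices always yields a homogeneous set of size at least $n/(s+t)$, which is exactly the value $\alpha=1/(s+t)$ highlighted in the paragraph preceding the corollary statement.
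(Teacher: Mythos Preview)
Your proof is correct and follows exactly the paper's approach: the paper states the corollary as ``an immediate consequence of Theorem~\ref{thm:main1}'' after already noting in the preceding paragraph that any $(s,t)$-template has a homogeneous set of order $|G|/(s+t)$ by pigeonhole. Your write-up simply spells out that one-line argument in detail.
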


One may wish to consider any Betti number, not just parabolic ones, and to get rid of the ``almost''. First we recall some results on the  Erd\H{o}s--Hajnal conjecture. We say that a graph  $H$ is a \emph{Erd\H{o}s--Hajnal graph} if it satisfies the conjecture. The \emph{substitution} of a vertex $v$ in a graph $F$ by a graph $H$ is constructed by replacing $v$ by $H$ and making all vertices of $H$ incident to the neighbours of $v$ in $F$. The state of the art is:

\begin{theorem}\label{thm:eh}
A graph $G$ is Erd\H{o}s--Hajnal  if $G$ is 
\begin{itemize}
\item a vertex, an edge or a path on four vertices,
\item a bull (which is a triangle with two horns) \cite{CS08},
\item a five-cycle \cite{CSSS21},
\item the complement of an Erd\H{o}s--Hajnal graph (by definition of homogeneous set), or
\item constructed by substitution from Erd\H{o}s--Hajnal graphs \cite{APS01}.
\end{itemize}
\end{theorem}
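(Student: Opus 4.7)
The plan is to assemble the listed cases one by one, since Theorem~\ref{thm:eh} is a compendium of results drawn from different sources rather than a single new statement of this paper. My proposal is to give short self-contained arguments for the elementary cases, and to defer to the cited papers for the deep single-graph cases, in a style consistent with the rest of the section.

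For the trivial base cases (a single vertex, a single edge, and $P_{4}$) I would argue directly. Vertex and edge are immediate, since every non-empty graph contains a vertex and has either an edge or a non-edge, giving a homogenous subgraph. For $P_{4}$ I would invoke the classical fact that the $P_{4}$-free graphs are precisely the cographs, and then use that for any cograph $G$ either $G$ or $\overline{G}$ is disconnected; an easy induction on $|G|$ then produces a homogenous set of size at least $\sqrt{|G|}$, which is far stronger than what the Erd\H{o}s--Hajnal conjecture asks. The complement closure is then essentially a one-line observation: if $H$ is Erd\H{o}s--Hajnal with exponent $\tau$, and $G$ is $\overline{H}$-free, then $\overline{G}$ is $H$-free, and therefore $\overline{G}$ contains a homogenous set $S$ of order at least $|G|^{\tau}$; since cliques and independent sets are exchanged under complementation, $S$ is also homogenous in $G$, so $\overline{H}$ is Erd\H{o}s--Hajnal with the same exponent $\tau$.

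For the substitution closure I would cite \cite{APS01} and briefly indicate the idea: given a forbidden induced subgraph obtained by substituting $H'$ into a vertex $v$ of $H$, one performs a case analysis on whether the copy of that subgraph in $G$ has many of its ``$H'$-coordinates'' realized inside a single equivalence class, which reduces the problem to a bipartite-containment argument combined with the Erd\H{o}s--Hajnal property of $H$ and $H'$ separately.

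The hard part, and the reason Theorem~\ref{thm:eh} is interesting at all, is the two substantive single-graph cases: the bull from Chudnovsky--Safra \cite{CS08} and the five-cycle from Chudnovsky--Scott--Seymour--Spirkl \cite{CSSS21}. I would not attempt to reprove either: both are major results in structural graph theory, and the $C_{5}$ case in particular was a long-standing open problem only recently resolved. For the purposes of this section I would simply quote them, since Theorem~\ref{thm:eh} is background used to motivate the immediately preceding corollary rather than something the paper establishes on its own.
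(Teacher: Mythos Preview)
Your proposal is correct but actually does more than the paper. In the paper, Theorem~\ref{thm:eh} is stated purely as background (``The state of the art is\ldots'') with citations attached to each item, and no proof or argument is given at all; the narrative moves directly on to Lemma~\ref{lem:eh}. Your approach of supplying short self-contained arguments for the elementary closure properties (complement, substitution, and the cograph case for $P_4$) while deferring the bull and $C_5$ to \cite{CS08} and \cite{CSSS21} is entirely sound, and is the natural way one would flesh this out if asked to, but the paper itself treats the whole theorem as a citation.
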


We need the following result, and it is rather possible that it might be proved without the substitution technology.

\begin{lemma}\label{lem:eh}
Any cluster (i.e., a disjoint union of cliques) is Erd\H{o}s--Hajnal.
\end{lemma}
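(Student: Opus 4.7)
The plan is to build up the cluster from the basic Erd\H{o}s--Hajnal graphs listed in Theorem~\ref{thm:eh} via iterated substitution and complementation, both of which preserve the Erd\H{o}s--Hajnal property. Concretely, I would first establish that every clique $K_a$ is Erd\H{o}s--Hajnal, then observe that its complement $\overline{K_a}$ (the empty graph on $a$ vertices) is also Erd\H{o}s--Hajnal, and finally realize an arbitrary cluster as a substitution of cliques into an empty graph.

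For the first step, proceed by induction on $a$. The base case $a=2$ is immediate since an edge is listed as Erd\H{o}s--Hajnal. For the inductive step, take $K_2$ with vertices $u,v$, and substitute $v$ by $K_{a-1}$: the resulting graph has the $a-1$ vertices of $K_{a-1}$, all mutually adjacent, together with $u$ adjacent to every one of them (since $u$ was adjacent to $v$ in $K_2$), producing exactly $K_a$. Since $K_2$ and $K_{a-1}$ are both Erd\H{o}s--Hajnal (the latter by the inductive hypothesis), the substitution bullet of Theorem~\ref{thm:eh} gives that $K_a$ is too. Taking complements then yields that $\overline{K_k}$ is Erd\H{o}s--Hajnal for every $k$.

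For the main step, let $C = K_{a_1}\sqcup\cdots\sqcup K_{a_k}$ be a $k$-cluster, and let $F=\overline{K_k}$ with vertices $v_1,\dots,v_k$. Substitute $v_1$ by $K_{a_1}$, then in the resulting graph substitute $v_2$ by $K_{a_2}$, and so on. At each stage the substitution introduces no new edges between the inserted clique and the remaining vertices, because each $v_i$ has no neighbours in $F$ (and hence none in the partially substituted graph either). After all $k$ substitutions the output is exactly the disjoint union $K_{a_1}\sqcup\cdots\sqcup K_{a_k}=C$. Since each intermediate graph and each inserted clique is Erd\H{o}s--Hajnal by the previous paragraph, iterating the substitution bullet of Theorem~\ref{thm:eh} shows that $C$ is Erd\H{o}s--Hajnal, as claimed.

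The only real point to verify carefully is that the definition of substitution given before Theorem~\ref{thm:eh} really yields a disjoint union when the host graph $F$ has no edges; this is the obstacle, but it is an immediate check from the definition. Everything else is a direct application of the known closure properties of the Erd\H{o}s--Hajnal class.
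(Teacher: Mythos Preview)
Your proof is correct and follows essentially the same route as the paper's: first show all complete graphs are Erd\H{o}s--Hajnal by an inductive substitution, deduce that the edgeless graph $\overline{K_k}$ is Erd\H{o}s--Hajnal by complementation, and then build the cluster by substituting each vertex of $\overline{K_k}$ by a clique of the appropriate size. The only cosmetic differences are that the paper substitutes $K_2$ into $K_{n-1}$ rather than $K_{a-1}$ into $K_2$, and explicitly mentions $K_1$ (which you omit but which is listed directly in Theorem~\ref{thm:eh}).
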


\begin{proof}
The complete graphs are Erd\H{o}s--Hajnal because $K_1$ and $K_2$ are, and one gets $K_n$ from $K_{n-1}$ by substituting a vertex by $K_2$. The disjoint union of vertices is Erd\H{o}s--Hajnal, because its complement is a complete graph. The disjoint union of $k$ complete graphs is Erd\H{o}s--Hajnal, as one may start off with the disjoint union of $k$ vertices and then perform $k$ substitutions of them by complete graphs of appropriate orders. 
\end{proof}

In the following result we consider the Betti numbers whose non-vanishing is ``attainable'' by edge ideals: as recalled in Section\til\ref{sec:backalgebra}, they are the numbers located below row\til$2$ of the Betti table and to the right of the main diagonal consisting of the numbers $\beta_{i,\,2i+2}$, including row\til$2$ and that diagonal. This condition is given explicitly in terms of $i$ and $j$: for any $i\ge0$, we are interested in the Betti numbers $\beta_{i,j}$ with $2\le j\le 2i+2$.

\begin{proposition}
Let\/ $i\ge0$ and\/ $2\le j\le 2i+2$.
Then there exists $\tau>0$ such that
if $\beta_{i,j}(I_G)=0$ then there is a homogenous set of order $|G|^\tau$ in $G$.
\end{proposition}

\begin{proof}
Use Lemma~\ref{homologyoffatmatching} to find a disjoint union of complete graphs $H$ such that $\beta_{i,j}(I_H)>0$. The graph $H$ is Erd\H{o}s--Hajnal according to Lemma~\ref{lem:eh}.
\end{proof}

In general it is unclear to what extent this might be improved. By Theorem\til\ref{thm:frobg}, the graphs with only positive Betti numbers on the second line are exactly the complements of  chordal graphs. By Theorem\til\ref{thm:almchosplit}, almost all chordal graphs are split, and thus have a homogenous set spanning half the graph. But the disjoint union of $\sqrt n$ cliques on $\sqrt n$ vertices each is a chordal graph whose complement only has sub-linear homogenous sets.

\section{Concluding remarks}\label{sec:lastsec}

\subsection{Towards a moduli space of graphs}\label{sec:spaceofgraphs}

The main structural property for graphs that comes up in this paper is that of $(s,t)$-template. In this section we investigate the connectedness of the set of $(s,t)$-templates inside  the ``space of graphs'' introduced below. This is not directly related to the main results in this paper, but it may be useful for future developments.

Consider the following labeled graph $\mathcal{G}_n$: the vertices are all the unlabeled graphs on $n$ vertices, and two such graphs $G_1$ and $G_2$ are connected by an edge in $\mathcal{G}_n$ if one can obtain $G_1$ by adding an edge to $G_2$, or vice versa. 
We note that one
 may partition the vertices of $\mathcal{G}_n$ in $\binom n2+1$ independent sets, based on the cardinality of the edge set: the $i$-th independent set $I_i$ consists of all unlabeled graphs on $n$ vertices with exactly $i$ edges. Notice that the only edges of $\mathcal{G}_n$ are between independent sets of adjacent cardinalities (that is, between $I_i$ and $I_{i+1}$).  In particular the graph $\mathcal{G}_n$ is bipartite: one can partition the vertices of $\mathcal{G}_n$ in two independent sets consisting of the graphs with an odd and even number of edges.
  Clearly, taking the complement is an automorphism $G\mapsto\overline G$ of $\mathcal{G}_n$.

\begin{lemma}\label{lem:connected}
Let $G$ be an $(s,t)$-template with $s\ge1$ and $e<\binom n2$ edges. Then there is a way of adding one edge  to $G$ so that the resulting graph is still an $(s,t)$-template.
\end{lemma}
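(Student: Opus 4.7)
The plan is to fix a partition $\pi = (C_1, \dots, C_s, I_1, \dots, I_t)$ witnessing that $G$ is an $(s,t)$-template, invoke the hypothesis $e < \binom{n}{2}$ to produce a non-edge $\{u,v\}$ of $G$, and split into two cases according to where the non-edges sit with respect to $\pi$.

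In the easy case, some non-edge $\{u,v\}$ of $G$ has its two endpoints in \emph{different} parts of $\pi$. Adding such an edge cannot break any clique-part (we are only adding edges, so complete subgraphs stay complete), nor any independent-set part (the new edge does not lie inside any single $I_k$). Hence the original partition $\pi$ itself witnesses $G + uv$ as an $(s,t)$-template, and we are done.

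In the remaining case, every non-edge of $G$ has both endpoints inside a common part of $\pi$; since cliques in $\pi$ have no non-edges, this means that every non-edge of $G$ lies inside some independent-set part $I_j$. Here is precisely where the hypothesis $s \geq 1$ enters: fix any clique part, say $C_1$, and pick any non-edge $\{u,v\}$ contained in some $I_j$. By the case assumption, $u$ must be adjacent in $G$ to every vertex of $C_1$ (otherwise such a non-edge would sit across two distinct parts of $\pi$, contradicting the case), and likewise for $v$. Therefore $C_1 \cup \{u,v\}$ is a clique in $G+uv$, and the modified partition obtained from $\pi$ by moving $u,v$ from $I_j$ into $C_1$ witnesses $G+uv$ as an $(s,t)$-template.

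The only mild subtlety is the second case, in which the original partition is ``stuck'' and we must genuinely modify it; the role of $s \geq 1$ is exactly to give us an existing clique into which we can absorb the two endpoints of the new edge. No further calculations are required.
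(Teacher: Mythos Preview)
Your proof is correct and follows essentially the same strategy as the paper's: split according to whether some non-edge lies between two parts of a fixed $(s,t)$-partition, and in the ``stuck'' case use the hypothesis $s\ge 1$ to absorb vertices of an independent set into a clique part. The only cosmetic difference is that the paper moves a single vertex into a clique and then adds the edge across parts, whereas you add the edge first and move both endpoints into the clique; these are equivalent manoeuvres, and your version is arguably cleaner and more explicit about why the absorption is legitimate.
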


\begin{proof}
By definition, the graph $G$ can be covered with $s$ cliques and $t$ independent sets, that is, $s+t$ homogeneous sets. If there exist two non-adjacent vertices from distinct homogeneous subsets, we may add an edge to connect them, thus obtaining a new $(s,t)$-template with one more edge than $G$.
 If not, it means that there is at least one independent set~$S$ that has more than one vertex, because $e<\binom n2$. We may then consider a new covering of the graph $G$ where a vertex $v$ of~$S$ is ``moved'' it to any of the $s\ge1$ cliques. Then this vertex $v$ may be connected to a vertex of $S$, again getting an $(s,t)$-template with one extra edge.
\end{proof}

\begin{proposition} 
For any non-negative integers $s$ and $t$, the set of $(s,t)$-templates is connected in $\mathcal{G}_n$. 
\end{proposition}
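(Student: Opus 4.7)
The plan is to split the argument into cases based on whether $s\ge 1$ or $s=0$, and in each case to exhibit a distinguished $(s,t)$-template to which every other such template is connected via a path in $\mathcal{G}$ that stays inside the set of $(s,t)$-templates.

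First I would treat the case $s\ge 1$. The complete graph $K_n$ is a $(1,0)$-template, hence in particular an $(s,t)$-template whenever $s\ge 1$ (any of the cliques or independent sets in a covering may be empty). Starting from an arbitrary $(s,t)$-template $G$, I would iteratively apply Lemma~\ref{lem:connected}: as long as the current graph has strictly fewer than $\binom{n}{2}$ edges, the lemma produces a new edge whose addition yields a graph that is again an $(s,t)$-template. The resulting sequence of $(s,t)$-templates, each differing from the next by a single edge, is therefore a walk in $\mathcal{G}$ inside the set of $(s,t)$-templates that terminates at $K_n$. This shows the whole set of $(s,t)$-templates is connected through $K_n$ whenever $s\ge 1$.

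Next I would treat the case $s=0$ (so the $(0,t)$-templates are exactly the $t$-colorable graphs). If $t=0$, the set is empty for $n\ge 1$ and a singleton otherwise, so the claim is vacuous. If $t\ge 1$, the empty graph $\overline{K_n}$ is a $(0,1)$-template, and in particular a $(0,t)$-template. Dually to the previous case, I would now \emph{remove} edges one at a time: removing an edge from a $t$-colorable graph cannot destroy any of its $t$-colorings, so every intermediate graph is still a $(0,t)$-template. Iterating, every $(0,t)$-template is connected in $\mathcal{G}$ to $\overline{K_n}$ through $(0,t)$-templates, giving connectedness of the whole set.

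The two cases cover all non-negative $s,t$, so the proof is complete. There is no real obstacle: the key asymmetry is that adding edges may destroy independent sets but not cliques, while removing edges may destroy cliques but not independent sets. Lemma~\ref{lem:connected} handles the first difficulty when $s\ge1$, and the trivial monotonicity of $t$-colorability under edge deletion handles the second when $s=0$. The only mild subtlety to keep in mind is that after each application of Lemma~\ref{lem:connected} the partition of the vertex set into $s$ cliques and $t$ independent sets may have to be rechosen; this does not affect the argument, since we only need each intermediate graph to admit \emph{some} such partition.
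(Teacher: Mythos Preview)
Your proof is correct. The case $s\ge 1$ is handled exactly as in the paper, via Lemma~\ref{lem:connected} and the hub $K_n$. For $s=0$, $t\ge 1$ you take a genuinely different (and more elementary) route: you use the monotonicity of $t$-colorability under edge deletion to walk from any $(0,t)$-template to $\overline{K_n}$, whereas the paper instead invokes the complementation automorphism $G\mapsto\overline G$ of $\mathcal G$ together with the observation that $G$ is an $(s,t)$-template iff $\overline G$ is a $(t,s)$-template, thereby reducing the $(0,t)$ case to the already-proved $(t,0)$ case. Your argument avoids this reduction and is self-contained; the paper's argument, on the other hand, highlights the duality between cliques and independent sets and makes it transparent that no new idea is needed beyond Lemma~\ref{lem:connected}.
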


\begin{proof}
We first prove the claim for $s\ge1$. Notice that, for any $s\ge1$ and for any $t\ge0$, the complete graph $K_n$ is an $(s,t)$-template.
Let $s\ge1$ and pick any two $(s,t)$-templates $G$ and $H$. By Lemma~\ref{lem:connected}, there is a path from $G$ to the complete graph $K_n$, and there also is a path from $H$ to~$K_n$. Hence, $G$ and $H$ are connected.

For the case with $s=0$ and $t\ge1$, just  take the complements of the elements of $\mathcal{G}_n$, which is an automorphism of $\mathcal{G}_n$ as observed 
above and notice moreover that a graph $G$ is an $(s,t)$-template if and only if $\overline G$ is a $(t,s)$-template.

The trivial case of $(0,0)$-templates is the empty set, which is connected.
\end{proof}

\subsection{Future directions}\label{sec:future}

This paper is part of a research project whose aim is much more general than the specific theoretical tools employed in this text: we hope that some of the results we proved for edge ideals of graphs can be generalized to more general monomial ideals.
With squarefree monomial ideals in mind, we did not focus initially on employing the critical graphs of Balogh and Butterfield\til\cite{BaBu} but the new and powerful theory of hypergraph containers by Balogh, Morris and Samotij\til\cite{BMS15}, and Saxton and Thomason\til\cite{ST15}. Although some readers might consider the technicalities in employing critical graphs in our setting slightly daunting, it is still several levels less technical than using the complete and general theory of hypergraph containers. Establishing the desirable properties of parabolic clusters would be less direct and require more development without the explicit characterization of critical graphs. That said, hypergraph container theory would allow for a much more general setting than in this paper, and several interesting future directions emanate from that setup.

An advantage of hypergraph containers is that one may understand not only the typical behavior of graphs without certain substructures, but also typical behaviors of those with very few undesirable substructures. A future direction would be to translate that into the behavior of Betti numbers along the following lines:
Let $G$ be some model for a random graph on $n$ vertices, for example an instance of the generalized Erd\H{o}s--R\'enyi models called \emph{graph limits}\til\cite{LL12}.
Then the Betti number $\beta_{i,j}(I_G)$ is of the order $n^j$. In this text we focused on studying almost all graphs with $\beta_{i,j}(I_G)=0$, but with hypergraph containers one should get results for all graphs with $\beta_{i,j}(I_G)<n^{j-1}$. We expect something as follows:

\begin{conjecture}
For every non-negative real number $C$, there exists a non-negative real number $\alpha(C)$ satisfying the following. Let $\beta_{i,j}$ be a parabolic Betti number on the $r$-th row of the Betti table, for some $r \geq 3$. Then, for almost every graph $G$ on $n$ vertices with $\beta_{i,j}(I_G) \leq Cn^{j-1}$, one may construct a new graph $H$ from $G$ by changing at most $\alpha(C)n$ edges so that $reg(I_H)=r-1.$
\end{conjecture}

The case $C=0$ with $\alpha(C)=0$ is the first part of the main result of this paper.

Using a model of random graphs other than the uniform or Erd\H{o}s--R\'enyi should also be attainable within the context of hypergraph containers. For example one may consider the ordinary graph limits (or graphons) introduced by Lovasz et al.\til\cite{LL12} or monomial ideals from more general random combinatorial structures than graphs or hypergraphs as described in Chapter 7 of Kallenberg's textbook\til\cite{K06}.  

The main question driving us was: What does it mean that a Betti number vanishes? It is  natural to generalize this question to the vanishing of several Betti numbers. As an anonymous referee very kindly suggested, it would be possible to apply these results to study (at least asymptotically) the conjecture below (see\til\cite{ACSsubadd,McSubadd}). For an ideal $I$, let $t_i=t_i(I):= \mathrm{max}\{j \mid\beta_{i,j} (I)\ne0\}$. 
\begin{conjecture}[sub-additivity for edge ideals]
For an edge ideal $I_G$ one has $t_a+t_b\ge t_{a+b}$ for all $a$ and $b$ where $a+b$ is at most the projective dimension of $I_G$.
\end{conjecture}
In the conjecture above, in particular one has $\beta_{a,\,t_a+1}=0$ and $\beta_{b,\,t_b+1}=0$, by definition of $t_a$ and $t_b$. So it is natural to ask, for instance in the case that these Betti numbers are parabolic, whether the number $\beta_{a+b,\,j}$ is somehow controlled, at least asymptotically for almost all graphs.

Lastly, how about not necessarily monomial ideals? The most technical results from graph theory used in this paper have roots in continuous probability theory, and in some sense they work better in the classical analytic or measure-theoretic setting. It is completely conceivable that one could put interesting measures on ideals $I$ in a ring $R$ and get similar results as in our main theorem:  If a parabolic Betti number $\beta_{i,j}(I)$ on row $r$ in the Betti table vanishes, then $\reg(I)=r-1$ with probability\til$1$. In some sense the graph theory is abstractly awkward in the setup of this paper for both the commutative algebra related to algebraic geometry and for the probability theory hiding in the shadows behind the critical graphs, but it provides an explicitness that makes it possible to prove interesting theorems to begin with. With those theorems now proved, more general future directions have opened up.

\bibliographystyle{amsplain}
\bibliography{references-extremal}

\end{document}